\newcommand{\R}{\mathbb{R}}
\newcommand{\Z}{\mathbb{Z}}
\newcommand{\bdry}{\partial}
\newcommand{\lk}{\textrm{lk}}
\renewcommand{\d}{\partial}
\newcommand{\dd}[1]{\frac{\partial}{\partial{#1}}}
\newcommand{\Int}{\operatorname{Int}}
\newcommand{\cnm}{C_n[M]}
\newcommand{\cnmo}{C_n(M)}
\newcommand{\pij}{\pi_{ij}}
\newcommand{\sijk}{s_{ijk}}
\newcommand{\onto}{\rightarrow}
\def\co{\colon\thinspace}
\newtheorem{theorem}{Theorem}[section]
\newtheorem*{theorem53}{Theorem~\ref{general-change}}
\newtheorem*{thm-b1}{Theorem~\ref{alexander basis existence}}
\newtheorem*{thm-b3}{Theorem~\ref{alexander basis transformations}}
\newtheorem{lemma}[theorem]{Lemma}
\newtheorem{proposition}[theorem]{Proposition}
\newtheorem{corollary}[theorem]{Corollary}
\newtheorem{question}[theorem]{Question}
\theoremstyle{definition}
\newtheorem{definition}[theorem]{Definition}
\newcommand \ctwoo {C_2[\Omega]}
\newcommand \ao {A_2[\Omega]}
\newcommand \E{\mathfrak{E}}
\newcommand\bdy{\partial}
\newcommand\ddx[1]{\frac{\d}{\d x_{#1}}}
\newcommand\ddy[1]{\frac{\d}{\d y_{#1}}}
\newcommand\ddu[1]{\frac{\d}{\d u_{#1}}}
\newcommand\ds{\displaystyle}
\newcommand{\Embed}{\mathfrak{Embed}(\Omega \hookrightarrow \R^{2k+1})}
\newcommand{\Emb}{\mathfrak{E}}
\newcommand\ocomp{\bar\Omega}
\newcommand{\curl}[1]{\nabla\times {#1}}
\newcommand{\diver}[1]{\nabla\cdot {#1}}
\newcommand\cross{\times}
\newcommand\norm[1]{\left| #1 \right|}
\newcommand\Hel{\operatorname{H}}
\newcommand\BS{\operatorname{BS}}
\newcommand\Vol{\operatorname{vol}}
\newcommand\dVol{\operatorname{dvol}}
\newcommand\Lk{\operatorname{Lk}}
\newcommand\HP{\operatorname{HP}}
\newcommand\Torelli{\operatorname{Torelli}}
\newcommand\Flux{\operatorname{Flux}}
\newcommand\tbeta{\tilde{\beta}}
\newcommand\tgamma{\tilde{\gamma}}
\newcommand\tphi{\tilde{\phi}}
\newcommand\fia{\left( f^{-1} \right)^*\alpha}
\begin{document}

\title[new cohomological formula for helicity]{A new cohomological formula for helicity in $\R^{2k+1}$ reveals the effect of a diffeomorphism on helicity}
\author{Jason Cantarella}
\address{Department of Mathematics, University of Georgia, Athens, GA 30602}
\email{cantarel@math.uga.edu}
\author{Jason Parsley}
\address{Department of Mathematics, Wake Forest University, Winston-Salem, NC 27109}
\email{parslerj@wfu.edu}
\subjclass{Primary: 57R25; Secondary: 82D10,82D15}
\keywords{helicity of vector fields, configuration spaces, Bott-Taubes integration} 
 
\begin{abstract}
The helicity of a vector field is a measure of the average linking of pairs of integral curves of the field.  Computed by a six-dimensional integral, it is widely useful in the physics of fluids.  For a divergence-free field tangent to the boundary of a domain in 3-space, helicity is known to be invariant under volume-preserving diffeomorphisms of the domain that are homotopic to the identity.  We give a new construction of helicity for closed $(k+1)$-forms on a domain in $(2k+1)$-space that vanish when pulled back to the boundary of the domain.  Our construction expresses helicity in terms of a cohomology class represented by the form when pulled back to the compactified configuration space of pairs of points in the domain.  We show that our definition is equivalent to the standard one.  We use our construction to give a new formula for computing helicity by a four-dimensional integral.  We provide a Biot-Savart operator that computes a primitive for such forms; utilizing it, we obtain another formula for helicity.  As a main result, we find a general formula for how much the value of helicity changes when the form is pushed forward by a diffeomorphism of the domain; it relies upon understanding the effect of the diffeomorphism on the homology of the domain and the de Rham cohomology class represented by the form.  Our formula allows us to classify the helicity-preserving diffeomorphisms on a given domain, finding new helicity-preserving diffeomorphisms on the two-holed solid torus and proving that there are no new helicity-preserving diffeomorphisms on the standard solid torus.  We conclude by defining helicities for forms on submanifolds of Euclidean space.  In addition, we provide a detailed exposition of some standard `folk' theorems about the cohomology of the boundary of domains in $\R^{2k+1}$.
\end{abstract}
\maketitle
\section{Introduction}

The linking number of a pair of closed curves $a$ and $b$ in $\R^3$ is a topological measure of their entanglement. We can define the linking number as the degree of the Gauss map $g \co S^1 \cross S^1 \rightarrow S^2$ given by $g(\theta,\phi) =\left( a(\theta) - b(\phi) \right) / \norm{a(\theta) - b(\phi)}$. This degree can be written combinatorially, by counting signed crossings of $a$ and $b$, but we can also write this degree as an integral by pulling back the area form on $S^2$ via the Gauss map and integrating over the torus $S^1 \cross S^1$. This ``Gauss integral formula'' for linking number yields 
\begin{equation*}
\Lk(a,b) = \frac{1}{\Vol(S^2)} \int a'(\theta) \cross b'(\phi) \cdot \frac{a(\theta) - b(\phi)}{\norm{a(\theta) - b(\phi)}^3} d\theta \, d\phi.
\end{equation*}
The linking number is a knot invariant, so it is invariant under any ambient isotopy of $\R^3$ carrying the curves to new curves $\tilde{a}$ and $\tilde{b}$. 

Given a divergence-free vector field $V$ on a domain $\Omega \subset \R^3$ with smooth boundary, we can define an analogous integral invariant known as \emph{helicity}.  The six-dimensional helicity integral, which measures the average linking number of pairs of integral curves of the field \cite{MR891881}, is given by:
\begin{equation} 
\label{mhelicity}
\Hel(V) = \frac{1}{\Vol(S^2)} \int_{\Omega \times \Omega} {V(x) \times V(y) \cdot \frac{x-y}{|x-y|^3} \; \dVol_x \, \dVol_y }
\end{equation}
Just as the linking number of a pair of curves is a knot invariant, we might expect the helicity of a vector field to be a diffeomorphism invariant. This is not always true, as we will demonstrate below, but it is true in enough cases to make helicity an important quantity in fluid dynamics and plasma physics~\cite{MR819398}. 

The helicity invariant for vector fields was used in plasma physics as early as 1958 by L.\ Woltjer~\cite{MR0096542}. Woltjer showed that helicity was an invariant of the equations of ideal magnetohydrodynamics for an isolated system, and as such it was immediately useful in the study of astrophysical plasmas.  J.J.\ Moreau in 1961~\cite{MR0128195} 
first used the invariant to study fluid dynamics.  In an influential 1969 paper~\cite{mof1}, Keith Moffatt proved that helicity is an invariant of the equations of ideal fluid flow, even in the presence of an external force on the fluid. 

The invariance of helicity has been reproved in various physical contexts ever since.  For instance, Peradzynski showed that helicity was invariant under the equations of motion for superfluid helium~\cite{peradzynski}. The same invariant was associated to foliations by Godbillon and Vey in 1970~\cite{MR0283816}, by defining the foliation as the kernel of a 1-form and measuring the helicity of the form. In 1973, V.I.\ Arnol'd defined helicity for 2-forms in a 3-manifold~\cite{MR891881} (the paper was published in English translation in 1986). His may be the first proof of the invariance of helicity under arbitrary volume-preserving diffeomorphisms (on a simply-connected domain)\footnote{A corresponding theorem for the Godbillon-Vey invariant appears in a paper of G.\ Raby from 1988~\cite{MR949006}.}.  

The most general invariance theorem for helicity known is:
\begin{theorem}[Invariance of helicity theorem, \cite{MR1612569, MR1770976}]
\label{classicalinvariance}
The helicity of a divergence-free vector field $V$ on a domain $\Omega \subset \R^3$ is invariant under any volume-preserving diffeomorphism of $\Omega$ which is homotopic to the identity.  If $\Omega$ is simply connected,  
then helicity is invariant under any volume-preserving diffeomorphism of $\Omega$.  

If $V$ is a null-homologous vector field (meaning that its dual 2-form is exact) on a compact manifold $M^3$ without boundary, then helicity is invariant under any volume-preserving diffeomorphism of $M$.
\end{theorem}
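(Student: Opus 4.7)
The plan is to re-express helicity in terms of a primitive $1$-form, analyze its gauge ambiguity, and then use the triviality of $\phi^*$ on the relevant cohomology to rule out a change.

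First, I would translate \eqref{mhelicity} into differential-form language. The divergence-free field $V$ corresponds to the closed $2$-form $\omega_V = \iota_V \dVol$; tangency of $V$ to $\bdy\Omega$ means $\omega_V$ pulls back to zero on $\bdy\Omega$. Applying Fubini to \eqref{mhelicity} and recognizing the inner integral as a Newtonian potential yields $\Hel(V) = \int_\Omega \alpha \wedge d\alpha$, where $\alpha = \BS(V)$ is the Biot--Savart primitive satisfying $d\alpha = \omega_V$.

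Second, I would quantify the gauge ambiguity: any other primitive $\alpha' = \alpha + \eta$ with $d\eta = 0$ satisfies
\[
\int_\Omega \alpha'\wedge d\alpha' \;-\; \int_\Omega \alpha\wedge d\alpha \;=\; \int_\Omega \eta\wedge \omega_V,
\]
which, by Stokes' theorem and the vanishing of $\omega_V$ on $\bdy\Omega$, depends only on the de~Rham classes $[\eta]\in H^1(\Omega)$ and $[\omega_V]\in H^2(\Omega,\bdy\Omega)$, paired by Poincar\'e--Lefschetz duality.

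Third, for a volume-preserving diffeomorphism $\phi\co\Omega\to\Omega$, the pullback $(\phi^{-1})^*\alpha$ is a primitive for $\omega_{\phi_*V} = (\phi^{-1})^*\omega_V$, and the change-of-variables formula (using that $\phi$ preserves volume and orientation) gives
\[
\int_\Omega (\phi^{-1})^*\alpha \,\wedge\, d\bigl((\phi^{-1})^*\alpha\bigr) \;=\; \int_\Omega \alpha\wedge d\alpha \;=\; \Hel(V).
\]
The theorem thus reduces to showing that $(\phi^{-1})^*\alpha$ and the Biot--Savart primitive for $\phi_*V$ differ by a closed $1$-form whose Poincar\'e--Lefschetz pairing with $[\omega_V]$ is zero. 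In the three cases this happens for distinct reasons: (a) if $\phi$ is homotopic to the identity, $\phi^*$ acts trivially on $H^1(\Omega)$, so the discrepancy class is exact; (b) if $\Omega$ is simply connected, $H^1(\Omega)=0$ eliminates the ambiguity altogether; (c) if $V$ is null-homologous on a closed $3$-manifold, $[\omega_V] = 0$ and the pairing vanishes identically.

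The main obstacle lies in step three: the Biot--Savart primitive is built from a Newtonian kernel on $\R^3$ and is not natural under self-diffeomorphisms of $\Omega$, so tracking its transformation and identifying the resulting cohomological invariant requires care. Making this precise is itself the principal subject of the present paper, where an exact transformation formula for helicity under an arbitrary diffeomorphism is derived and the classical theorem emerges as the special case in which all correction terms vanish.
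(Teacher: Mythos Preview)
The paper does not itself prove Theorem~\ref{classicalinvariance}; it is quoted as background from the cited references. The paper recovers all three cases later as corollaries of the general transformation formula (Theorem~\ref{general-change}) and gives an independent proof of the homotopic-to-identity case in Proposition~\ref{homotopic-invariance}. Your framework---rewriting helicity as $\int_\Omega \alpha\wedge d\alpha$ and reducing invariance to the cohomology class of the difference of two primitives---is exactly the strategy of Section~\ref{sec:invariance}, and your arguments for cases~(b) and~(c) are correct.

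There is, however, a genuine gap in case~(a). You assert that because $\phi^*$ acts trivially on $H^1(\Omega)$, ``the discrepancy class is exact.'' But the discrepancy $\eta=\BS(\phi_*V)-(\phi^{-1})^*\BS(V)$ is not of the form $\phi^*\xi-\xi$ for any closed $\xi$ (indeed $\BS(V)$ is not closed on $\Omega$), so triviality of $\phi^*$ on $H^1(\Omega)$ tells you nothing about $[\eta]$. Computing the periods of $\eta$ over longitudes $s_j\subset\bdy\Omega$ (as in the paper's Proposition~\ref{coeffs}) gives $\int_{s_j}\eta=\sum_i c_{ij}\,\Flux(V,\tau_i)$, where the $c_{ij}$ record how $\bdy\phi_*$ acts on $H_1(\bdy\Omega)$, not $H_1(\Omega)$. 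The Dehn twist on a solid torus (Theorem~\ref{torus-h}) acts trivially on $H_1(\Omega)$ yet has $c_{11}\neq 0$ and changes helicity by $\Flux(V)^2$; so your implication is false as stated. What actually saves case~(a) is that a homotopy to the identity forces $\bdy\phi_*$ to be trivial on $H_1(\bdy\Omega)$ as well, so all $c_{ij}=0$. The paper's own argument for this case (Proposition~\ref{homotopic-invariance}) avoids the issue entirely by a different route: it extends the Gauss map over $C_2[\Omega]\times[0,1]$ using the homotopy and applies Stokes' theorem directly on the configuration space, never touching Biot--Savart or the boundary cohomology.
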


Also, if $V$ on $\Omega$ is fluxless (cf. section~\ref{sec:fluxless}) on a domain in $\R^3$ with boundary, then its helicity is invariant under any volume-preserving diffeomorphism~\cite{MR1770976}.  These invariance results leave open some natural questions: are these all of the helicity-preserving diffeomorphisms?  If not, can we classify the diffeomorphisms that do preserve helicity?  What is the effect of an arbitrary diffeomorphism on helicity?

Figure~\ref{tori} depicts a diffeomorphism which does not preserve helicity.  Here, the domain is a solid torus, and the vector field following its longitudes is divergence-free and null-homologous\footnote{For domains in $\R^3$ with boundary, it is the fluxless condition, and not the null-homologous condition, which guarantees that helicity remains unchanged under all volume-preserving diffeomorphisms.  We will give a homological interpretation of this fact in section~\ref{sec:invariance}.}.
  Applying a Dehn twist will preserve the volume form but changes the helicity of the field, which we will calculate by Theorem~\ref{torus-h}. 
 
\begin{figure}[ht]
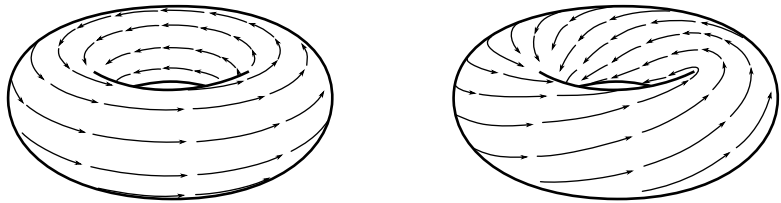

\begin{overpic}{straight_field}
\end{overpic}
\hspace{0.5in}
\begin{overpic}{twisted_field}
\end{overpic}
\caption{The figure shows the effect of a diffeomorphism $f$ which applies a Dehn twist to the solid torus $\Omega$ on a vector field $V$ dual to a 2-form $\alpha$. If the radius of the tube is $R$, the helicity of the left hand field is $0$ and the helicity of the right hand field is equal to the square of the flux of $V$ across a spanning surface for the tube: $\pi^2 R^4$}
\label{tori}
\end{figure}

To answer the questions above, we notice that in the theory developed so far, there exists an asymmetry between linking number and helicity -- while there are several useful ways to obtain linking number, including a ``purely homological'' expression as the degree of a map and a combinational expression as the sum of signed crossing numbers as well as an integral expression, so far the helicity has only been expressed as an integral.  In this paper we try to restore the balance between linking number and helicity by providing a purely cohomological definition for the helicity of $(k+1)$-forms on domains $\Omega$ in $\R^{2k+1}$ (Definition~\ref{fthelicity}).  We work with forms $\omega$ that are closed and satisfy the following definition:

\begin{definition}
A smooth $p$-form $\alpha$ defined on the domain $\Omega$ is a \emph{Dirichlet form} if $\alpha$ vanishes when restricted to the boundary, i.e., if $V_1, \ldots, V_p$ are all tangent to $\bdy \Omega$, then $\alpha(V_1, \ldots, V_p)=0$. 
\end{definition}  

For domains in $\R^3$, closed Dirichlet forms are dual to vector fields that are divergence-free and tangent to the boundary.  In Appendix~\ref{hodge}, we examine decompositions of differential forms; in particular we characterize the set of closed Dirichlet forms.  Proposition~\ref{alphaisexact} guarantees that every closed Dirichlet form is exact.

Arnol'd defined helicity as the integral of the wedge product of an exact form with a primitive for that form~\cite{MR1612569}.  But there are many primitives for a given exact form on a general domain -- so it is clear that the choice of primitive must be important to the definition. The helicity integral implicitly solves this problem by constructing a particular inverse curl for the given vector field by integration. Khesin and Chekanov generalized this approach to forms by defining an primitive for a form on $\Omega$ by integrating the wedge product of the pullback of the form to $\Omega \cross \Omega$ and a singular ``linking form'' over the fiber in the bundle $\Omega \cross \Omega \rightarrow \Omega$~\cite{MR1028280,MR1612569}. 

Our paper is divided into two parts. In the first part of our paper, we redevelop this standard theory of helicity using a new idea: instead of defining a singular linking form on $\Omega \cross \Omega$, we use a nonsingular form on the compactified configuration space $C_2[\Omega]$ of pairs of distinct points on $\Omega$. This approach allows us to give simpler and clearer proofs of the standard results on helicity for forms which better expose the underlying topology of the problem.

Here is a summary of our construction for helicity in a simple special case.  Suppose that on a ball $\Omega \subset \R^3$ we consider the helicity of a divergence-free vector field $V$ that is tangent to $\bdy\Omega$. If we associate a 2-form $\alpha$ to $V$ by pairing $V$ with the volume form in $\R^3$, we will prove that the helicity can be expressed as an integral over the 6-dimensional compactified configuration space $C_2[\Omega]$ of disjoint pairs of points in $\Omega$. 

Let us understand the topology of this configuration space.  We note that $C_2[\Omega]$ is homeomorphic to $\Omega \cross \left( \Omega - B_r(x) \right)$, where $B_r(x)$ is a small neighborhood of a point in $\Omega$. Since $\Omega$ is a ball, $\Omega - B_r(x) \simeq S^2 \cross I$ and this space is $D^3 \cross S^2 \cross I \simeq D^4 \cross S^2$. The 2-form $\alpha$ can be pulled back to a pair of 2-forms $\alpha_x$ and $\alpha_y$ on $C_2[\Omega]$ under the projections of $\{x,y\}$ to $x$ and $y$. We will show that $\alpha_x \wedge \alpha_y$ is a closed Dirichlet 4-form on $C_2[\Omega]$. Hence $\alpha_x \wedge \alpha_y$ will represent a class $h[g]$ in the 1-dimensional relative de Rham cohomology group $H^4(D^4 \cross S^2,\bdy (D^4 \cross S^2))$ where $g$ is a generator. We show that if $[g]$ is the Poincar\'{e} dual of the standard area form on $S^2$, then $h$ is the helicity of $V$ divided by the square of the volume of $\Omega$. This gives a cohomological definition of helicity.

This definition has several attractive consequences.  First, our configuration space approach extends to the standard setting for higher dimensional helicity: $(k+1)$-forms on $(2k+1)$-dimensional domains \cite[Section 5]{MR2078570} 
.  We recover the expected result that for $(4n + 1)$-dimensional domains (that is, for even values of~$k$) helicity can extend only to a function that is identically zero, but for $(4n+3)$-dimensional domains, helicity is a nontrivial invariant for differential forms.  We will see immediately that for forms, the question of whether a given diffeomorphism is volume-preserving has no bearing on whether the diffeomorphism preserves helicity.  We are then able to give a quick proof of some of the standard invariance results for helicity in Proposition~\ref{homotopic-invariance}. Some new constructions are immediately suggested by our definition: a new, ``combinatorial'' integral for helicity appears in Proposition~\ref{prop:combinatorial}.   
We complete our redevelopment of the standard theory by proving that our helicity integral can be written as integration against an appropriately chosen primitive in Proposition~\ref{prop:potential}. 

We then begin the second part of paper, in which we solve the problem of computing the effect of an arbitrary diffeomorphism $f\co \Omega \rightarrow \Omega'$ on the helicity of a closed Dirichlet $(k+1)$-form $\alpha$. The theorems in this section are all new.

It is a standard fact (see Appendix B, Theorems~\ref{alexander basis existence} and~\ref{alexander basis transformations}) that the $k$-th homology of $\bdy\Omega$ splits into two subspaces generated by cycles $s_1, \dots, s_n$ which bound relative $(k+1)$-cycles outside $\Omega$ and Poincar\'{e} dual cycles $t_1, \dots, t_n$ which bound relative $(k+1)$-cycles $\tau_1, \dots, \tau_n$ inside~$\Omega$. With respect to a corresponding basis $\langle s'_1, \dots, s'_n, t'_1, \dots, t'_n \rangle$ for the $k$-th homology of $\bdy\Omega'$, we can write the linear map $\partial f_*\co H_k(\bdy\Omega) \rightarrow H_k(\bdy\Omega')$ as a block matrix
\begin{equation*}
\bdy f_* = \left[ 
	\begin{array}{c|c}
	I & 0 \\
	\hline 
	(c_{ij}) & I
	\end{array}
	\right],
\end{equation*}
where the $c_{ij}$ form a symmetric matrix when $k$ is odd and a skew-symmetric matrix when $k$ is even. We then have

\begin{theorem53}
Let $\Omega^{2k+1}$ be a subdomain of $\R^{2k+1}$, and $f\co\Omega \rightarrow \Omega'$ be an orientation-preserving diffeomorphism.  Consider a closed Dirichlet $(k+1)$-form $\alpha$.  The change in the helicity of $\alpha$ under $f$ is
\begin{equation}
\Hel\left(\alpha' \right) - \Hel(\alpha) = \sum_{i,j} c_{ij} \cdot \Flux(\alpha, \tau_i) \Flux(\alpha, \tau_j) \tag{\ref{eq:general-change}}
\end{equation}
where the constants $c_{ij}$ arise from the homology isomorphism induced by $f$ on $H_k(\bdy\Omega)$ as above.  The $(2m+2)$-form $\alpha'$ is the `push-forward' of $\alpha$ under $f$; more precisely, $\alpha'=\fia$ is the pullback of $\alpha$ under the inverse diffeomorphism. \end{theorem53}

Note that for $k$ even (i.e., subdomains of $\R^{4m+1}$) the matrix $\left( c_{ij} \right)$ is skew-symmetric. So Theorem~\ref{general-change} implies that helicity does not change under any diffeomorphism of $\Omega$. This confirms our previous calculation that helicity is always zero in these dimensions.

The simplest example of this theorem is attractive and easy to understand: a diffeomorphism of a solid torus in $\R^3$ isotopic to $j$ Dehn twists changes the helicity of a $2$-form on the torus by $j$ times the square of the integral of the form over a spanning disk, as in Figure~\ref{tori}.

In general, this allows us (for $k$ odd) to classify the helicity-preserving diffeomorphisms from $\Omega$ to $\Omega$ as those maps for which the $c_{ij}$ are all zero. If a diffeomorphism acts trivially on the homology of $\Omega$, it is in this class if and only if it acts trivially on the homology of $\bdy\Omega$ (Corollary~\ref{classification}).

We finish our paper with some discussion of directions for future research, including defining the $(k,n,m)$-helicity of $(k+1)$-forms on $n$-dimensional submanifolds of $\R^m$, an application of our results to computing ``cross-helicities'' of vector fields in two disjoint domains, and some thoughts on defining generalized helicities in a way inspired by the construction of the finite-type invariants for knots.

\section{Defining helicity in terms of cohomology}

Helicity is motivated by the classical linking number between $k$ and $l$ cycles in $\R^{k+l+1}$. If $k=l$, observe that this linking number is only defined in odd-dimensional ambient spaces, which is why the classical linking number and helicity are defined in $\R^3$. In general, one could attempt to define the helicity of a tuple of $k$ vector fields in $\R^{2k+1}$, applying a version of the Gauss linking integral. We find it more natural to write such a tuple as dual to a single $(k+1)$-form, which can be envisioned as the form constructed by contracting the $k$-tuple of vector fields with the volume form of $\R^{2k+1}$. In the 3-dimensional case, we take a single vector field $V$ and construct a dual 2-form $\alpha$ by contracting the vector field with the volume form of $\R^3$ according to the rule
\begin{equation*}
\alpha(W_1,W_2) = \dVol(V,W_1,W_2).
\end{equation*}
Under this correspondence between vector fields and forms, a divergence-free vector field tangent to the boundary of $\Omega$ becomes a closed Dirichlet $2$-form.

In general, we begin with a closed Dirichlet $(k+1)$-form $\alpha$, defined on a domain $\Omega$ in $\R^{2k+1}$.  Our first goal is to define the helicity of $\alpha$ in terms of the cohomology class represented by a form constructed from $\alpha$ in the configuration space $C_2[\Omega]$ of two disjoint points in $\Omega$. We will start by recalling the definition of $C_2[\Omega]$, which will be a smooth closed manifold with boundary and with corners, in Section~\ref{configspaces}.  We will then give our construction of helicity in Section~\ref{redefining}.   Extending helicity to cases where $k$ is odd produces a nontrivial function; when $k$ is even, helicity extends to a function that is always zero.  Finally, we will prove that our helicity is the standard helicity of a vector field in $\R^3$ in Section~\ref{helicitysame}.

\subsection{The Fulton--MacPherson compactification of configuration spaces}
\label{configspaces}

We start with a piece of technology: the Fulton-MacPherson compactification of a configuration space. There are many versions of this classical material (see for instance~\cite{MR1259368,MR1258919}). We follow Sinha~\cite{MR2099074} as this gives a geometric viewpoint appropriate to our setting. 

\begin{definition}
\label{cnmo}
Given an $m$-dimensional manifold $M$, define the \emph{configuration space} $\cnmo$ to be the subspace of $n$-tuples $(x_i) := (x_1, \dots, x_n) \in M^n$ such that $x_i \neq x_j$ if $i\neq j$. Let $\iota$ denote the inclusion of $\cnmo$ in~$M^{n}$. 
\end{definition}

\newcommand{\otuple}[2]{\genfrac{[}{]}{0pt}{}{#1}{#2}}
The configuration space $\cnmo$ may be thought of as the space of ordered $n$-tuples in $M^n$, without the diagonals.  Given $n$ distinct points in $M$, there are $n!$ points in $\cnmo$ corresponding to the permutations of the $n$ points. The Fulton--Macpherson compactification of $\cnmo$, defined below, keeps track of the directions and relative rates of approach when configuration points come together. To simplify the definition of $\cnm$, we introduce a bit of notation. Let $\otuple{n}{k}$ be the set of ordered $k$-tuples chosen from a set of $n$ elements and $\#\otuple{n}{k}$ to be the number of such tuples.

\begin{definition}[Sinha Definition 1.2]
For $(i,j)\in \otuple{n}{2}$, let $\pij\co C_n(\R^m)\rightarrow S^{m-1}$ be the map which sends~$(x_\ell)$ to the unit vector in the direction of $x_i-x_j$. Let $[0,\infty]$ be the one-point compactification of $[0,\infty)$.  For $(i,j,k)\in \otuple{n}{3}$, let $\sijk \co C_n(\R^m)\rightarrow [0,\infty]$ be the map which sends $(x_\ell)$ to $|x_i-x_j|/|x_i-x_k|$. 
\end{definition}

We work with an arbitrary smooth manifold $M$ by first embedding $M$ in~$\R^m$ (Whitney embedding), then defining the maps $\pij$ and $\sijk$ by restriction. Thus $C_n(M)$ is a submanifold of $C_n(\R^m)$. If $M=\R^m$, then it is a submanifold of itself through the identity map.  

\begin{definition} [Sinha Definition 1.3] Let $A_n[M]$ be the product 
\begin{equation*}
A_n[M] = (M)^{n}\times (S^{m-1})^{\#\otuple{n}{2}} \times [0,\infty]^{\#\otuple{n}{3}}.
\end{equation*}  
Define the \emph{Fulton--MacPherson compactification} $\cnm$ to be the closure of the image of $\cnmo$ under the map
$\iota_n= \iota \times (\pij\vert_{\cnmo}) \times ((\sijk)\vert_{\cnmo}) \co C_n(M)\rightarrow A_n[M]$. 
Let $\bdry \cnm:=\cnm - \cnmo$ denote the boundary of $C_n[M]$, the points that are added in the closure.
\end{definition}

We list a few properties of this compactification, from \cite{MR2099074} and Theorem~2.3 of~\cite{MR2102844}.
\begin{theorem}
\label{thm:config}
The spaces $\cnm$ and $\cnmo$ have the following properties:
\begin{itemize}
\item $\cnm$ is a ``manifold with corners'' with interior $\cnmo$. It has the same homotopy type as $\cnmo$. It is independent of the embedding of $M$ in $\R^m$, and it is compact if $M$ is.
\item \label{surjection} The inclusion of $\cnmo$ in $(M)^n$ extends to a surjective map $p$ from $\cnm$ to $(M)^n$ which is a homeomorphism over points in $\cnmo$. 
\item The boundary of $\cnm$ is stratified into a collection of faces of various dimensions.
\item An embedding $f \co M \rightarrow N$ induces an embedding of manifolds with corners called the evaluation map $ev_n[f] \co C_n[M]\rightarrow C_n[N]$ which respects the stratifications on the boundaries. 
\end{itemize}
\end{theorem}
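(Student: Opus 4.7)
The plan is to verify each of the four bullets by following the approach of Sinha and Axelrod--Singer, which is essentially reproduced in the cited references. I would organize the argument to dispose of the easy claims first. Compactness and independence of embedding are immediate: $\cnm$ is by construction a closed subset of $A_n[M] = M^n \times (S^{m-1})^{\#\otuple{n}{2}} \times [0,\infty]^{\#\otuple{n}{3}}$, and the last two factors are already compact, so $\cnm$ is compact whenever $M$ is. Independence of the Whitney embedding $M \hookrightarrow \R^m$ follows because the maps $\pij$ and $\sijk$ encode only intrinsic data (unit directions and ratios of distances) in the limit; a change of ambient embedding amounts only to a reparameterization by the differentials, which induces a canonical homeomorphism between the resulting compactifications. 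The surjection $p\co \cnm \to M^n$ in the second bullet is simply projection onto the first factor of $A_n[M]$, and both surjectivity and the homeomorphism property on $\cnmo$ are immediate from the definition of $\cnm$ as a closure.

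The central claim is that $\cnm$ is a manifold with corners with interior $\cnmo$. I would index the boundary strata combinatorially by rooted trees $T$ whose leaves are labeled $1,\dots,n$, where the internal nodes record nested collision events and the root corresponds to the coarsest collision pattern. For each such tree $T$, I would construct local charts near its stratum as follows. Start with coordinates on $M^n$ near the chosen collision point. For each internal node $v$ of $T$, introduce spherical-radial coordinates $(u_v, r_v) \in S^{m-1} \times [0,\eps)$ encoding the relative directions and rates of approach among the children of $v$, with $r_v = 0$ corresponding to actual collision at $v$. If $T$ has $d$ internal nodes, the resulting chart is modeled on $\R^{nm - d} \times [0,\eps)^d$. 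The same-homotopy-type claim then follows from a deformation retraction that simultaneously pushes each boundary radius $r_v$ back to a positive value, retracting $\cnm$ onto $\cnmo$.

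The stratification in the third bullet is immediately read off the tree indexing: the face corresponding to a tree $T$ has codimension equal to the number of internal nodes of $T$, and inclusion of faces corresponds to contracting internal edges of trees. For the final bullet, an embedding $f\co M \to N$ acts coordinate-wise to give $\cnmo \to C_n(N)$; the key observation is that because $df$ is injective on each tangent space, the directions and rates encoded by $\pij$ and $\sijk$ transform in a well-defined way under $f$ in the collision limit. The continuous extension $ev_n[f]\co \cnm \to C_n[N]$ respects the tree stratification by construction, since collision patterns in $M$ are sent to the same collision patterns of their images in $N$.

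The main obstacle, and the technical heart of the argument, is verifying that the charts constructed in the second step genuinely produce a smooth manifold-with-corners structure: in particular, that charts associated to different trees $T$ overlap compatibly, and that the inclusion of $\cnm$ into $A_n[M]$ is a smooth embedding of manifolds with corners. I would prove this by induction on the depth of $T$, with base case a single collision (one spherical blow-up) and inductive step adding a further sub-collision among already-collided points, which amounts to performing an additional blow-up inside a chart already constructed. This is exactly the content of Sinha's Proposition~3.9, and I would defer to his inductive argument for the remaining technical details rather than reproduce them here.
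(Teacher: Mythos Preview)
The paper does not provide its own proof of this theorem: it is stated as a collection of facts quoted directly from Sinha~\cite{MR2099074} and from Theorem~2.3 of~\cite{MR2102844}, with no argument given. Your outline is a faithful sketch of Sinha's approach in those references, so in that sense there is nothing to compare; you have simply filled in what the paper chose to cite rather than reproduce.
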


The stratification of boundary faces of $\cnm$ has a beautiful combinatorial structure: in general, the set of faces of all codimensions is a Stasheff associahedron. While this structure is very interesting, we will use very little of it below, so we do not describe it in detail. We will need only the following:

\begin{lemma}
If $\Omega$ is a $k$-manifold with boundary embedded in $\R^n$ then the boundary of the manifold $C_2[\Omega]$ has three smooth faces, $\bdy\Omega \cross \Omega$, $\Omega \cross \bdy{\Omega}$ and an ``interior'' face diffeomorphic to the unit tangent bundle $UT(\Omega)$ of $\Omega$. Following the notation of Sinha, we will call this last face $(12)$, meaning that points 1 and 2 come together on that face. These codimension-1 faces of the boundary of $C_2[\Omega]$ meet at faces of higher codimension.
\end{lemma}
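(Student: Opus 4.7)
The plan is to identify the points of $\bdy C_2[\Omega] = C_2[\Omega] \setminus C_2(\Omega)$ by tracing which limits of configurations get added by Sinha's closure. For $n=2$ there are no $s_{ijk}$-coordinates in the definition, so the compactification records only the underlying pair $(x_1,x_2) \in \Omega^2$ together with, when the two points approach one another, the limiting value of $\pi_{12} \in S^{n-1}$ (using $n$ for the ambient Euclidean dimension as in the lemma statement).

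First, I would use the surjection $p\co C_2[\Omega] \rightarrow \Omega^2$ from \thm{config}. On the locus $\{(x_1,x_2) : x_1 \neq x_2\}$ the map $p$ is a homeomorphism, so boundary points in this locus correspond exactly to points of $\bdy(\Omega \times \Omega) = (\bdy\Omega \times \Omega) \cup (\Omega \times \bdy\Omega)$. These give the first two smooth faces named in the lemma; their pointwise intersection $\bdy\Omega \times \bdy\Omega$ is a codimension-$2$ corner rather than part of either face. Each of the two pieces is a smooth manifold of dimension $(k-1)+k = 2k-1$, as expected for a codimension-one face of the $2k$-manifold $C_2[\Omega]$.

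Next, I would analyze $p^{-1}(\Delta)$ over the diagonal $\Delta = \{(x,x) : x \in \Omega\}$. Here the Fulton--MacPherson closure keeps track of the limit of $\pi_{12}$ along sequences $(x_1^\nu, x_2^\nu) \in C_2(\Omega)$ with $x_i^\nu \to x$. Because $\Omega$ is a smooth $k$-submanifold of $\R^n$, a first-order Taylor expansion of the embedding at $x$ in a slice chart shows that any such limit must be a unit vector in $T_x\Omega$; conversely, every unit tangent vector at every $x \in \Omega$ (including boundary points) is realized by a suitable smooth arc in $\Omega$ through $x$. Hence $p^{-1}(\Delta)$ is naturally identified as a smooth manifold with the unit tangent sphere bundle $UT(\Omega)$, which has dimension $k + (k-1) = 2k-1$, again the correct codimension. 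This is the ``interior'' face labeled $(12)$.

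Finally, I would remark on how the three smooth faces meet at higher codimension: the two boundary-product faces share the corner $\bdy\Omega \times \bdy\Omega$ of codimension two, while the face $(12)$ meets each boundary-product face along a copy of $UT(\Omega)\vert_{\bdy\Omega}$, parametrizing collisions that occur at points of $\bdy\Omega$. The only substantive step is the identification of the limiting value of $\pi_{12}$ with a unit tangent vector rather than merely a unit vector in $\R^n$, which is where the smoothness of the embedding $\Omega \hookrightarrow \R^n$ is really used; everything else is bookkeeping for the closure in $A_2[\Omega]$.
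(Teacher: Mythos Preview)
Your proposal is correct and follows essentially the same approach as the paper's proof: both observe that for two points there are no $s_{ijk}$ coordinates, identify the product-boundary faces $\bdy\Omega \times \Omega$ and $\Omega \times \bdy\Omega$ directly, and then argue that along the diagonal the closure records a limiting direction which, by smoothness of the embedding, must lie in $T_x\Omega$, yielding $UT(\Omega)$. Your use of the projection $p$ and the explicit dimension counts make the same argument slightly more systematic, but there is no substantive difference.
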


\begin{proof}
The space $C_2[\Omega]$ is a closed subspace of the larger space $A_2[\R^n]$ created by closing the image of $\Omega \cross \Omega$ under the map $\iota$. In this larger space, the boundary of the image consists of the image of $\bdy(\Omega \cross \Omega)$ together with a new boundary face created by removing the diagonal of $\Omega \cross \Omega$. 

We are only taking configurations of pairs of points in $\Omega$, so there are no $s_{ijk}$ maps, and only two $\pi_{ij}$ maps: $\pi_{12}$ and $\pi_{21}$. Along the new boundary face, then, the map $\iota$ records the location $z$ and limiting  direction $u$ of approach of pairs of points in $M$. This direction, recorded by $\pi_{12}$ and $\pi_{21}$, is a unit vector in the tangent space to $\Omega$ at~$z$. The set of all such pairs is a copy of $UT(\Omega)$.

These boundary faces meet at pairs of points where, for instance, an interior point approaches a boundary point, or where both points in the pair are on the boundary of $\Omega$. Sinha shows that these are faces of higher codimension.
\end{proof}

\begin{corollary} 
\label{c2obdy}
If $\Omega$ is a domain in $\R^{2k+1}$ with smooth boundary, then the boundary of $C_2[\Omega]$ consists of three faces diffeomorphic to $\Omega \cross \bdy\Omega$, $\bdy\Omega \cross \Omega$ and $\Omega \cross S^{2k}$.
\end{corollary}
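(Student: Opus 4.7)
The plan is to apply the preceding lemma directly and then identify the unit tangent bundle explicitly in the Euclidean setting. The lemma already delivers the three smooth codimension-one boundary faces of $C_2[\Omega]$ as $\bdy\Omega \cross \Omega$, $\Omega \cross \bdy\Omega$, and the ``interior'' diagonal face diffeomorphic to $UT(\Omega)$. So the entire content of the corollary is the identification $UT(\Omega) \isom \Omega \cross S^{2k}$.

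The key observation is that $\Omega$ is an open subdomain of $\R^{2k+1}$, so its tangent bundle is canonically trivial: $T\Omega \isom \Omega \cross \R^{2k+1}$ via the standard framing $\partial/\partial x_1, \ldots, \partial/\partial x_{2k+1}$ inherited from $\R^{2k+1}$. Restricting this trivialization fiberwise to unit vectors gives a diffeomorphism $UT(\Omega) \isom \Omega \cross S^{2k}$, where the factor $S^{2k}$ is the unit sphere in $\R^{2k+1}$. I would state this trivialization explicitly and note that, under the inclusion of $\Omega$ into $\R^{2k+1}$ used to define the configuration space compactification, the limiting direction $u = \pi_{12}$ of a pair of colliding points at $z \in \Omega$ is precisely a unit vector in $\R^{2k+1}$, so the identification given by the Whitney embedding agrees with the standard trivialization of $T\Omega$.

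Combining these observations yields the stated list of three boundary faces. The only mild subtlety to be careful about is that the face $\Omega \cross S^{2k}$ is meant as the interior collision stratum: it does not include the higher-codimension portions where a collision occurs at a boundary point of $\Omega$ or where one of the two points independently reaches $\bdy\Omega$ while the other stays in the interior. Those loci lie in the higher-codimension strata referred to at the end of the lemma and therefore do not appear in the list of (smooth, codimension-one) faces.

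The main obstacle, such as it is, is purely notational: one must distinguish the intrinsic unit tangent bundle of $\Omega$ (defined without reference to an embedding) from the model arising in the Fulton--MacPherson construction (which a priori records limiting directions in $\R^{2k+1}$). Because $\Omega$ is open in $\R^{2k+1}$, these two pictures of $UT(\Omega)$ coincide canonically, and the corollary follows immediately.
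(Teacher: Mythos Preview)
Your proposal is correct and matches the paper's approach exactly: the corollary is stated without a separate proof, since it follows immediately from the preceding lemma together with the observation that a domain in $\R^{2k+1}$ has trivial tangent bundle, so $UT(\Omega) \cong \Omega \times S^{2k}$. Your careful remarks about the agreement between the intrinsic $UT(\Omega)$ and the Fulton--MacPherson blowup model, and about the codimension-one versus higher-codimension strata, are entirely appropriate elaborations of this one-line deduction.
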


Sinha additionally defines configuration spaces where one or more points in the configuration are fixed. In this case, $C_{n,k}[M]$ is the space of $n$ points where $k$ points are fixed and the remaining $n-k$ points vary.

\subsection{Redefining helicity} 
\label{redefining}

Motivated by the Bott-Taubes approach \cite{MR1295465} to defining finite-type knot invariants, we now seek to define helicity for a $(k+1)$-form $\alpha$ on a domain in $\R^{2k+1}$ by integration over an appropriate configuration space. We will construct a ``universal'' $2k$-form on $C_2[\R^{2k+1}]$ by a Gauss map and then define helicity to be the integral of the wedge product of the universal form and a form derived from~$\alpha$ over $C_2[\Omega]$. The corresponding approach for knots is explained beautifully by Volic~\cite{MR2300426}.

So let $\Omega \subset \R^{2k+1}$ be a compact subdomain with piecewise smooth boundary and let $\alpha$ be a closed Dirichlet $(k+1)$-form on $\Omega$.  In~$\R^3$, we may equivalently start with a smooth vector field $V$ on $\Omega$ that is divergence-free and tangent to the boundary, and take $\alpha$ to be the dual 2-form to $V$.  The divergence-free condition implies that $\alpha$ is closed, while the boundary condition on $V$ implies that $\alpha\vert_{\bdy\Omega}=0$.

\begin{lemma} \label{alpha} 
The closed $(k+1)$-form $\alpha$ on $\Omega$ pulls back to a pair of closed $(k+1)$-forms $\alpha_x$ and $\alpha_y$ on $C_2[\Omega]$.  Hence, their wedge product $\alpha_x \wedge \alpha_y$ is a closed $(2k+2)$-form on $C_2[\Omega]$. 
\end{lemma}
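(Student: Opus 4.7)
The plan is straightforward: exhibit the two forms as pullbacks along canonical smooth maps $C_2[\Omega]\to\Omega$, invoke naturality of the exterior derivative, and then apply the Leibniz rule to their wedge.

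First, by Theorem~\ref{thm:config}, the inclusion $C_2(\Omega)\hookrightarrow \Omega\times\Omega$ extends to a surjective smooth map $p\co C_2[\Omega]\to\Omega\times\Omega$ that is a homeomorphism over the interior. Composing $p$ with the two coordinate projections $\pi_x,\pi_y\co\Omega\times\Omega\to\Omega$ gives smooth maps $p_x:=\pi_x\circ p$ and $p_y:=\pi_y\circ p$ from $C_2[\Omega]$ to $\Omega$. I would then simply \emph{define}
\begin{equation*}
\alpha_x := p_x^*\alpha, \qquad \alpha_y := p_y^*\alpha.
\end{equation*}
These are smooth $(k+1)$-forms on the manifold-with-corners $C_2[\Omega]$ because the pullback of a smooth form along a smooth map is smooth.

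Second, for closedness, I would use that pullback commutes with $d$: since $d\alpha=0$ on $\Omega$,
\begin{equation*}
d\alpha_x = d(p_x^*\alpha) = p_x^*(d\alpha) = 0,
\end{equation*}
and likewise $d\alpha_y=0$. Then the graded Leibniz rule gives
\begin{equation*}
d(\alpha_x\wedge\alpha_y)=d\alpha_x\wedge\alpha_y+(-1)^{k+1}\alpha_x\wedge d\alpha_y=0,
\end{equation*}
while degrees add to $(k+1)+(k+1)=2k+2$, so $\alpha_x\wedge\alpha_y$ is indeed a closed $(2k+2)$-form on $C_2[\Omega]$.

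There is essentially no obstacle here beyond bookkeeping; the only point that merits a sentence of comment is that the maps $p_x,p_y$ are genuinely smooth all the way out to the boundary faces of $C_2[\Omega]$ (including the interior face $UT(\Omega)$), rather than merely continuous extensions of smooth maps on the interior. This is immediate from the Fulton--MacPherson construction recalled above: the map $p$ is the restriction to $C_2[\Omega]$ of the projection $A_2[\Omega]\to\Omega^2$ onto the first factor of the product defining $A_n[M]$, hence is smooth in the manifold-with-corners structure by Theorem~\ref{thm:config}. With that observation in hand, the lemma reduces to the two one-line computations above.
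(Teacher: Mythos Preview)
Your proof is correct and follows essentially the same approach as the paper: define $\alpha_x$ and $\alpha_y$ as pullbacks of $\alpha$ under the composites of $p\co C_2[\Omega]\to\Omega\times\Omega$ with the two coordinate projections, then use naturality of $d$ and the Leibniz rule. The paper's argument is terser, omitting the explicit Leibniz computation and your remark on smoothness up to the boundary, but the content is identical.
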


\begin{proof} 
We take the surjective map $p: C_2[\Omega] \onto \Omega \cross \Omega$, guaranteed by Theorem~\ref{thm:config}, and  compose it with either of the two projections from $\Omega \cross \Omega \onto \Omega$ to obtain a map $C_2[\Omega] \onto \Omega$.  If we take $(x,y) \in C_2[\Omega]$, then these maps send $(x,y) \mapsto x$ or $(x,y) \mapsto y$. The pullback of $\alpha$ under the first map will be denoted $\alpha_x$ and the pullback under the second will be denoted $\alpha_y$. Since $\alpha$ is closed, $\alpha_x$, $\alpha_y$ and $\alpha_x \wedge \alpha_y$ are all closed forms.
\end{proof}

We now want to study the pullback of $\alpha_x \wedge \alpha_y$ to the boundary of $C_2[\Omega]$. To do so, we must first introduce coordinates on that boundary. As $\ctwoo$ has codimension $4k$ in the ambient space $\ao$, the $8k+2$ natural coordinates on $\ao$ ($2k+1$ on $\Omega_x$, $2k+1$ on $\Omega_y$, $2k$ on each $S^{2k}$) overdetermine coordinates on $\ctwoo$.  In a neighborhood near the ``interior'' boundary face $(12)$, which we recall is diffeomorphic to $\Omega \cross S^{2k} \subset \ctwoo$ by Corollary~\ref{c2obdy}, it will be convenient to work with three different coordinate systems:
\begin{itemize}
\item {\it configuration coordinates}:  $\{x_i, y_j\}$.  These induce well-defined values on $S^{2k}$ except on the face $(12)$ where $x=y$.
\item {\it midpoint-offset coordinates}:  $\{m_i, o_j\}$.  Define $m:=(x+y)/2$ to be the midpoint of $xy$ and $o:=(x-y)/2$ to be the offset between $x$ and $y$. These variables are defined so that $x = m + o$ and $y = m - o$. These also induce well-defined values on $S^{2k}$, except on $\{o=0\}$, which describes the boundary face $(12)$.
\item {\it boundary spherical coordinates}:  $\{z_i, r, u_j\}$.   Define $\{r, u_j\}$ as spherical coordinates on the $o_j$ variables above so that $u_j$ is always a unit vector and 
\begin{equation*}
r = \norm{o}.
\end{equation*}
These have the advantage of naturally extending to the boundary face $(12)$, described by $\{r=0\}$.   
\end{itemize}

On $(12)$, the boundary spherical coordinates provide natural coordinates $\{z_i,  u_j\}$.  The $\{z_i\}$ describe the point $x=y$ while the $u_j$ measure the limiting direction by which $x$ and $y$ approached each other.

\begin{lemma}
\label{alphavanishesonbdy}
If $\alpha$ is a Dirichlet form on $\Omega$, then $\alpha_x \wedge \alpha_y$ is a Dirichlet form on $C_2[\Omega]$.
\end{lemma}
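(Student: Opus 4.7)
The plan is to verify directly that $\alpha_x \wedge \alpha_y$ restricts to zero on each of the three codimension-one boundary faces of $C_2[\Omega]$ listed in Corollary~\ref{c2obdy}: the two ``outer'' faces $\bdy\Omega \cross \Omega$ and $\Omega \cross \bdy\Omega$, and the ``diagonal'' or ``interior'' face $(12) \cong \Omega \cross S^{2k}$. Since being Dirichlet is exactly the property that a form vanishes on tuples of vectors tangent to the boundary, checking the pullback to each smooth face is enough, and the higher-codimension corner faces then follow by continuity.

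The two outer faces are immediate. On $\bdy\Omega \cross \Omega$, the projection $C_2[\Omega] \to \Omega$ through the first coordinate $x$ factors through $\bdy\Omega$, so $\alpha_x$ is the pullback of $\alpha|_{\bdy\Omega}$, which is zero by the Dirichlet hypothesis on $\alpha$. Hence $\alpha_x \wedge \alpha_y$ vanishes on this face. The symmetric argument handles $\Omega \cross \bdy\Omega$ using $\alpha_y$.

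The main obstacle is the interior face $(12)$, since neither $\alpha_x$ nor $\alpha_y$ vanishes there individually and one must instead exploit the structure of the wedge. I would use the boundary spherical coordinates $(z_i, r, u_j)$ introduced just above the lemma, in which $(12)$ is the locus $\{r=0\}$ with the $z_i$ parametrizing $\Omega$ and the $u_j$ parametrizing $S^{2k}$. Since $x = z + ru$ and $y = z - ru$, the derivative of the projection $x \co C_2[\Omega] \to \Omega$ at a point of $\{r=0\}$ sends $\partial/\partial z_i \mapsto \partial/\partial x_i$ and kills the spherical directions $\partial/\partial u_j$ (which are scaled by $r$ and so collapse at $r=0$); the same holds for $y$. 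Consequently, both restrictions $\alpha_x|_{(12)}$ and $\alpha_y|_{(12)}$ equal the pullback $\pi^*\alpha$ under the projection $\pi \co \Omega \cross S^{2k} \to \Omega$.

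It then follows that $(\alpha_x \wedge \alpha_y)|_{(12)} = \pi^*(\alpha \wedge \alpha)$. Since $\alpha$ is a $(k+1)$-form on the $(2k{+}1)$-dimensional manifold $\Omega$, its wedge square is a $(2k{+}2)$-form on a manifold of dimension $2k{+}1$, hence identically zero; pulling back preserves this. Combining the three cases, $\alpha_x \wedge \alpha_y$ vanishes on all of $\bdy C_2[\Omega]$, which is precisely the Dirichlet condition. The only step requiring care is the coordinate computation showing that the $S^{2k}$-directions are annihilated by both projections on $(12)$; everything else is a dimension count or a direct use of the Dirichlet hypothesis on $\alpha$.
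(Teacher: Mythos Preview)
Your proof is correct and follows essentially the same approach as the paper. Both arguments dispose of the outer faces using the Dirichlet hypothesis directly, and on the face $(12)$ both use the boundary spherical coordinates to show that the restrictions of $\alpha_x$ and $\alpha_y$ involve only the $z_i$-directions; your phrasing of the conclusion as $\pi^*(\alpha\wedge\alpha)=0$ by a dimension count is a slightly cleaner packaging of the paper's observation that a $(2k+2)$-form in the $2k+1$ one-forms $dz_i$ must vanish.
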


\begin{proof}
As we saw in Corollary~\ref{c2obdy}, the boundary of $C_2[\Omega]$ consists of three codimension one faces: $\bdy\Omega \cross \Omega$, $\Omega \cross \bdy\Omega$ and $(12)$. On the first two boundary faces, either $x$ or $y$ is on $\bdy\Omega$.  But $\alpha$ vanishes when pulled back to $\bdy\Omega$, so $\alpha_x$ vanishes on $\bdy\Omega_x$ and $\alpha_y$ on $\bdy\Omega_y$.  Thus $\alpha_x \wedge \alpha_y$ vanishes on these faces.

The third codimension one face, which we call face $(12)$, is all that remains. For convenience, let $I=(i_1, \ldots, i_k)$ denote a multi-index, so that $dx_I = dx_{i_1} \wedge \dots \wedge dx_{i_k}$. Using this notation, we observe that $\alpha_x$ can only consist of terms such as $h_{I}(x) dx_I$, with no $y$ dependence.  Similarly, $\alpha_y$ consists of terms with the same coefficient functions $h_{I}(y) dy_I$.  The functions $h_{I}(x)$ are smooth functions of $x$ since our original 2-form $\alpha$ on $\Omega$ was smooth.

Consider these terms on the boundary face $(12)$, which is a copy of $\Omega \cross S^{2k}$.  We will write $\alpha_x \wedge \alpha_y$ in the boundary spherical coordinates $\{z_i, u_j \}$. In ``midpoint-offset'' coordinates, $x=m+o$ and $y=m-o$.  Thus each $dx_i = dm_i + do_i$. If we now convert to boundary spherical coordinates using $o = r u$, then we see that $do_i = u_i dr + r du_i$. Now on the boundary face $(12)$, we have $r = 0$. 
The coefficient functions $h_{I}$ are smooth at $r=0$, so the term $h_{I} r du_I = 0$ on $(12)$ and the pullback of $\alpha_x$ to the boundary can have no $du_i$ terms. Further, $dr$ vanishes when pulled back to the boundary $S^{2k}$, so no $dr$ terms can be involved either. This means that the $(k+1)$-form $\alpha_x$ is expressed entirely in terms of the $2k+1$ midpoint coordinates $dm_i$.

But the same is true for $\alpha_y$, so the $(2k+2)$-form $\alpha_x \wedge \alpha_y$ involves only the $2k+1$ elementary 1-forms $dm_i$. Thus some $dm_i$ is repeated, forcing this form to be zero.
\end{proof}

In the original definition of helicity in~\eqref{mhelicity}, we integrated over $\Omega \cross \Omega$ even though the integrand was not defined on the diagonal. To justify the integration, it would be enough to show that the integrand converged on the diagonal. In fact, we can show that the integrand vanishes as we approach the diagonal. Lemma~\ref{alphavanishesonbdy} is the appropriate version of that familiar statement in our new setting.

We now give a definition:
\begin{definition}
\label{gaussmap}
The \emph{Gauss map} $g \co C_2(\Omega) \rightarrow S^{2k}$ is given by $(x,y) \mapsto (y-x)/\norm{y-x}$.
\end{definition}

\begin{lemma}
\label{gausslem}
The Gauss map is a smooth map defined on all of $C_2[\Omega]$, including the boundary. The pullback of the unit volume form $\Vol$ on $S^{2k}$ by $g$ defines a closed $2k$-form $g^*\dVol$ on $C_2[\Omega]$.
\end{lemma}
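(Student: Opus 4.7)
The plan is to identify $g$ with one of the projection maps used to build the Fulton--MacPherson compactification, then invoke the standard fact that pullback commutes with $d$.

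First I would observe that, up to sign, the Gauss map is exactly the projection $\pi_{21}$ (or $-\pi_{12}$) from the definition of $\iota_n$ in Sinha's construction: $\pi_{21}(x,y) = (y-x)/\norm{y-x}$. By the very definition of $C_2[\Omega]$ as the closure of the image of $C_2(\Omega)$ under the map $\iota_2 = \iota \times \pi_{12} \times \pi_{21}$ inside $A_2[\R^{2k+1}]$, projection onto the $S^{2k}$-factor corresponding to $\pi_{21}$ is a smooth map on $A_2[\R^{2k+1}]$, and its restriction to $C_2[\Omega]$ agrees with $g$ on the open stratum $C_2(\Omega)$. Therefore $g$ extends to a smooth map on all of the manifold with corners $C_2[\Omega]$.

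To describe this extension concretely on each boundary face (using the structure from Corollary~\ref{c2obdy}): on the faces $\bdy\Omega \cross \Omega$ and $\Omega \cross \bdy\Omega$ the two configuration points remain distinct, so $g$ is given by its original formula. On the interior face $(12)$, which we have coordinatized as $\{(z,u)\} \isom \Omega \cross S^{2k}$ using the boundary spherical coordinates $\{z_i, r, u_j\}$ with $r = \norm{o}$ and $o = (x-y)/2$, the extension is simply $g(z,u) = -u$ (the limiting direction of approach), as one sees by writing $(y-x)/\norm{y-x} = -o/\norm{o} = -u$ for $r > 0$ and taking $r \to 0$.

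Once smoothness of $g$ on $C_2[\Omega]$ is established, the pullback $g^*\dVol$ is automatically a smooth $2k$-form on $C_2[\Omega]$. Closedness is immediate: $\dVol$ is a top-dimensional form on $S^{2k}$, hence $d\dVol = 0$, and since exterior differentiation commutes with pullback we have
\begin{equation*}
d(g^*\dVol) = g^*(d\dVol) = 0.
\end{equation*}
The only real content of the lemma is the boundary extension, and the main (mild) obstacle is making sure the reader sees that this extension is forced by the construction of $C_2[\Omega]$ rather than something one needs to verify by hand in local coordinates.
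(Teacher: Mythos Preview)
Your proof is correct and follows essentially the same route as the paper: both identify $g$ with the projection $\pi_{21}$ and observe that the Fulton--MacPherson construction of $C_2[\Omega]$ is designed precisely so that these projections extend smoothly across the blow-up face $(12)$. You supply more detail than the paper (the concrete description on each face and the explicit closedness argument via $d(g^*\dVol)=g^*(d\dVol)=0$), but the core idea is identical.
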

 
\begin{proof}
The Gauss map extends naturally to $\Omega \times \bdy\Omega$ and $\bdy\Omega \times \Omega$, so we only have to worry about the boundary face $(12)$ of $C_2[\Omega]$.  But by construction, $(12)$ is a blow-up of the diagonal of $\Omega \times \Omega$ so that the maps $\pi_{ij}$ extend smoothly to the boundary.  In this case, $\pi_{21} = g$, so the lemma is proven.
\end{proof}

This lemma demonstrates why $C_2[\Omega]$ was better for our construction than $\Omega \cross \Omega$. While the latter is simpler to work with, we could not have extended the Gauss map smoothly to the diagonal of $\Omega \cross \Omega$. In fact, the form $g^*\dVol$ is the same as the ``linking form'' of~\cite{MR1028280} (which is defined on $\Omega \cross \Omega$) on the interior of $C_2[\Omega]$. The essential difference is that $g^*\dVol$ extends smoothly to the boundary of $C_2[\Omega]$ while the linking form has a singularity on the diagonal of $\Omega \cross \Omega$.

We can now combine the observations of Lemmas~\ref{alpha}, \ref{alphavanishesonbdy}, and~\ref{gausslem} to redefine helicity.  We have shown that if $\alpha$ is a closed Dirichlet form on $\Omega$,  then $\alpha_x \wedge \alpha_y$ is a closed Dirichlet form on $C_2[\Omega]$.  Hence $\alpha_x \wedge \alpha_y$ represents a relative de Rham cohomology class $[\alpha_x \wedge \alpha_y]$ in $H^{2k+2}(C_2[\Omega],\bdy C_2[\Omega]; \R)$. Similarly, $g^*\dVol$ is closed 
so it represents an absolute de Rham cohomology class $[g^*\dVol]$ in $H^{2k}(C_2[\Omega]; \R)$. We will use de Rham cohomology (and thus coefficients in $\R$) for the rest of the paper. We now make an observation about the volume form on $C_2[\Omega]$:

\begin{lemma}
\label{lem:volumeform}
If $M$ has a volume form $\dVol_M$, then there is a natural volume form $\dVol_{C_2[\Omega]}$ with total volume $\Vol(C_2[\Omega]) = \Vol(\Omega)^2$.
\end{lemma}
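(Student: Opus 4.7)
The plan is to build $\dVol_{C_2[\Omega]}$ by pulling back the obvious product volume form via the surjection $p \co C_2[\Omega] \onto \Omega \cross \Omega$ supplied by Theorem~\ref{thm:config}. On $\Omega \cross \Omega$, if $\pi_1, \pi_2$ denote the two projections to $\Omega$, define
\begin{equation*}
\dVol_{\Omega \cross \Omega} := \pi_1^* \dVol_M \wedge \pi_2^* \dVol_M,
\end{equation*}
which is a top-degree form and, by Fubini, has total integral $\Vol(\Omega)^2$. I would then simply set
\begin{equation*}
\dVol_{C_2[\Omega]} := p^* \dVol_{\Omega \cross \Omega}.
\end{equation*}

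To see this is a volume form and has the desired total volume, I would proceed in two steps. First, by Theorem~\ref{thm:config}, $p$ restricts to a diffeomorphism of $\cnmo = C_2(\Omega)$ with its image $(\Omega \cross \Omega) \setm \Delta$, where $\Delta$ is the diagonal. Hence on the open dense subset $C_2(\Omega) \subset C_2[\Omega]$, the pullback $p^*\dVol_{\Omega \cross \Omega}$ is non-degenerate and positively oriented, so it is genuinely a volume form there. Second, the complement $C_2[\Omega] \setm C_2(\Omega) = \bdry C_2[\Omega]$ is a union of boundary strata of codimension at least $1$ (Corollary~\ref{c2obdy} exhibits the codimension-$1$ strata, and the remaining strata have higher codimension), so it has Lebesgue measure zero in $C_2[\Omega]$. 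Likewise, the diagonal $\Delta$ has codimension $2k+1 \geq 1$ in $\Omega \cross \Omega$ and hence measure zero under $\dVol_{\Omega \cross \Omega}$.

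Putting these together, and using that $p$ is a measure-preserving diffeomorphism on the complement of a measure-zero set,
\begin{equation*}
\Vol(C_2[\Omega]) = \int_{C_2[\Omega]} p^*\dVol_{\Omega \cross \Omega} = \int_{C_2(\Omega)} p^*\dVol_{\Omega \cross \Omega} = \int_{(\Omega\cross\Omega)\setm\Delta} \dVol_{\Omega \cross \Omega} = \int_{\Omega \cross \Omega} \dVol_{\Omega \cross \Omega} = \Vol(\Omega)^2,
\end{equation*}
with the last equality by Fubini's theorem.

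The only subtle point is whether $\dVol_{C_2[\Omega]}$ should be declared a \emph{volume form} on the manifold with corners $C_2[\Omega]$ (i.e., nondegenerate everywhere, including the boundary strata), or merely a top-degree form which coincides with a volume form on the interior. Either interpretation suffices for the stated volume computation, since what is being asserted is an integral identity and the boundary has measure zero. If one wants genuine nondegeneracy on the boundary face $(12) \cong \Omega \cross S^{2k}$, this is the potential obstacle; however, in the boundary spherical coordinates $\{z_i, r, u_j\}$ introduced in Section~\ref{redefining}, the pullback $p^*\dVol_{\Omega \cross \Omega}$ acquires a factor of $r^{2k}$ from the Jacobian of the blowup and so degenerates precisely along $\{r=0\}$; hence $\dVol_{C_2[\Omega]}$ is naturally only a volume form on the interior, which is exactly what is needed and what the statement claims.
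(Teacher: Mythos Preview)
Your construction is exactly the paper's: the paper defines $\dVol_{C_2[\Omega]} = (\dVol_\Omega)_x \wedge (\dVol_\Omega)_y$, which is precisely your $p^*\dVol_{\Omega\cross\Omega}$ since $(\dVol_\Omega)_x = p^*\pi_1^*\dVol_\Omega$ and $(\dVol_\Omega)_y = p^*\pi_2^*\dVol_\Omega$. You supply more detail than the paper does --- the measure-zero argument for the volume computation and the honest observation that the form degenerates on the face $(12)$ --- but the approach is identical.
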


\begin{proof}
Just as we pulled back the $(k+1)$-form $\alpha$ to forms $\alpha_x$ and $\alpha_y$ on $C_2[\Omega]$, we can pull back $\dVol_\Omega$ to $(\dVol_\Omega)_x$ and $(\dVol_\Omega)_y$. Then $\dVol(C_2[\Omega]) = (\dVol_{\Omega})_x \wedge (\dVol_{\Omega})_y$.
\end{proof}
This lemma enables us to define helicity.
\begin{definition}
\label{fthelicity}
If $\alpha$ is a closed Dirichlet $(k+1)$-form on $\Omega \subset \R^{2k+1}$, then we have seen that $\alpha_x \wedge \alpha_y$ defines a cohomology class in $H^{2k+2}(C_2[\Omega],\bdy C_2[\Omega])$. We also know that $g^*\dVol_{S^{2k}}$ defines a cohomology class in $H^{2k}(C_2[\Omega])$. Let $[\dVol_{C_2[\Omega]}] \in H^{4k+2}(C_2[\Omega],\bdy C_2[\Omega]) \simeq \R$ be the top class of $C_2[\Omega]$ defined by the standard volume form. The cup product $[\alpha_x \wedge \alpha_y] \cup [g^*\dVol_{S^{2k}}]$ is in $H^{4k+2}(C_2[\Omega])$ and is hence a multiple of $[\dVol_{C_2[\Omega]}]$.

We define the \emph{helicity} $\Hel(\alpha)$ of $\alpha$ by 
\begin{equation*}
[\alpha_x \wedge \alpha_y] \cup [g^*\dVol] = \frac{\Hel(\alpha)}{\Vol(\Omega)^2} [\dVol_{C_2[\Omega]}].
\end{equation*}
We can calculate $\Hel(\alpha)$ explicitly as the integral
\begin{equation} \label{hdefn}
\Hel(\alpha) = \int_{C_2[\Omega]} \alpha_x \wedge \alpha_y \wedge g^*\dVol_{S^{2k}}.
\end{equation}
\end{definition}
Let $\Phi=\alpha_x \wedge \alpha_y \wedge g^*\dVol_{S^{2k}}$ denote the integrand above.

In Theorem~\ref{ftclassical}, we will show that our definition agrees with the classical integral on three-dimensional domains and the usual extension to the helicity of $(k+1)$-forms on $(2k+1)$-dimensional domains.  As we expect from the theory of the Hopf invariant~\cite[Proposition 17.22]{MR658304},
\begin{proposition}
\label{even-k}
For even $k$ values, the helicity of every $(k+1)$-form is zero.
\end{proposition}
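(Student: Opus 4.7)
The natural approach is to exploit the $\Z/2$-symmetry of $C_2[\Omega]$: let $\sigma \co C_2[\Omega] \to C_2[\Omega]$ be the involution that swaps the two configuration points, $\sigma(x,y)=(y,x)$. This involution extends smoothly from $C_2(\Omega)$ to the Fulton--MacPherson compactification (the symmetric group action is built into the construction). My strategy is to show that when $k$ is even, the integrand $\Phi = \alpha_x \wedge \alpha_y \wedge g^*\dVol_{S^{2k}}$ satisfies $\sigma^* \Phi = \Phi$, while $\sigma$ itself is orientation-reversing. The change-of-variables formula will then give
\begin{equation*}
\Hel(\alpha) = \int_{C_2[\Omega]} \Phi = -\int_{C_2[\Omega]} \sigma^*\Phi = -\Hel(\alpha),
\end{equation*}
forcing $\Hel(\alpha) = 0$.

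The plan for the sign bookkeeping is to track each tensor factor separately. Since the two projections $p_1, p_2 \co C_2[\Omega] \to \Omega$ satisfy $p_1 \circ \sigma = p_2$, we get $\sigma^* \alpha_x = \alpha_y$ and $\sigma^* \alpha_y = \alpha_x$, so
\begin{equation*}
\sigma^*(\alpha_x \wedge \alpha_y) = \alpha_y \wedge \alpha_x = (-1)^{(k+1)^2} \alpha_x \wedge \alpha_y,
\end{equation*}
which equals $-\alpha_x \wedge \alpha_y$ exactly when $k+1$ is odd, i.e., when $k$ is even. For the Gauss map factor, observe that $g \circ \sigma = -g = A \circ g$, where $A$ is the antipodal map on $S^{2k}$. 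Because $A$ is an isometry of the round sphere of degree $(-1)^{2k+1} = -1$, it satisfies $A^*\dVol_{S^{2k}} = -\dVol_{S^{2k}}$ as forms, so $\sigma^*(g^*\dVol_{S^{2k}}) = -g^*\dVol_{S^{2k}}$. For $k$ even, the two minus signs cancel and $\sigma^*\Phi = \Phi$.

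Finally, I would verify that $\sigma$ is orientation-reversing on the $(4k+2)$-manifold $C_2[\Omega]$. In any configuration coordinate chart, $\sigma$ swaps two blocks of $2k+1$ coordinates, and this block swap is the product of $2k+1$ disjoint transpositions, with sign $(-1)^{2k+1} = -1$ (independent of $k$). Combined with $\sigma^*\Phi = \Phi$ from the previous paragraph, the displayed chain of equalities above gives the result. The only subtlety, and thus the main obstacle if any, is confirming that $\sigma$ is well-defined and smooth on the boundary strata of $C_2[\Omega]$; this follows from the equivariance of the Fulton--MacPherson construction under coordinate permutations, and one can check directly on the codimension-one face $(12)$ that $\sigma$ acts by $(z, u) \mapsto (z, -u)$ on $\Omega \times S^{2k}$, which is clearly smooth.
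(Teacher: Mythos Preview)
Your proof is correct and is essentially identical to the paper's own argument: the paper uses the same swap involution (called $a$ there), computes $a^*\Phi = (-1)^k \Phi$ from the same two sign contributions $(-1)^{(k+1)^2}$ and $-1$, notes that $a$ reverses orientation since it exchanges two odd-dimensional factors, and concludes $-\Hel(\alpha) = (-1)^k \Hel(\alpha)$. Your treatment is slightly more explicit about the extension of $\sigma$ to the boundary strata, but otherwise the approaches coincide.
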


\begin{proof}
Let us consider the automorphism $a$ of $C_2(\Omega)$ that interchanges $x$ and $y$; it extends naturally to $C_2[\Omega]$.  It reverses the orientation of $C_2[\Omega]$, since it exchanges the order of a product of odd-dimensional spaces.

We take the pullback $a^*\Phi = \alpha_y \wedge \alpha_x \wedge a^*g^*\dVol_{S^{2k}}$.  The map $a$ induces an antipodal map on $S^{2k}$; such a map has degree $-1$.  Hence, $a^*g^*\dVol_{S^{2k}}= -g^*\dVol_{S^{2k}}$.  Also, $\alpha_y \wedge \alpha_x = (-1)^{(k+1)^2} \alpha_x \wedge  \alpha_y$.  Combining these results, $a^*\Phi = (-1)^k \Phi$.  We then compute
\begin{equation*}
-\Hel(\alpha) = \int_{-C_2[\Omega]} \Phi = \int_{a\left(C_2[\Omega]\right)} \Phi = \int_{C_2[\Omega]} a^*\Phi = \int_{C_2} (-1)^k \Phi = (-1)^k \Hel(\alpha).
\end{equation*}

If $k$ is even, this implies that $\Hel(\alpha) = -\Hel(\alpha)$, i.e., that helicity is zero, and proves our proposition.  If $k$ is odd, the conclusion is a tautology: $\Hel(\alpha) = \Hel(\alpha)$.
\end{proof}

\subsection{Comparison with the standard definition of helicity}
\label{helicitysame}

This description of helicity as a cohomology class may seem quite different from the definition of helicity that we gave earlier. So before we explore the consequences of our new definition, we will reassure ourselves that this approach is correct by showing explicitly that for 2-forms defined on domains in $\R^3$, our 6-form $\Phi$ on $C_2(\Omega)$ is exactly the classical helicity integrand. 

\begin{lemma}
\label{lemma-int}
Let $\Omega$ be a compact subdomain of $\R^3$ with smooth boundary, and let $\alpha$ be a closed Dirichlet 2-form on $\Omega$.  Let $V$ be the vector field dual to $\alpha$.  Recall from Definition~\ref{cnmo} that the map $\iota$ naturally embeds $C_2(\Omega)$ into $\Omega \times \Omega$.  Then, the integrand $\Phi$ from (\ref{hdefn}), namely the 6-form $\alpha_x \wedge \alpha_y \wedge g^*\dVol $, is equal to the pullback via $\iota$ of the classical helicity integrand
\begin{equation}
\label{hintegrand}
\frac{1}{4\pi} V(x) \times V(y) \cdot \frac{x-y}{|x-y|^3} \; \dVol_x \, \dVol_y.
\end{equation}
\end{lemma}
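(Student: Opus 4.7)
The plan is a direct computation in Euclidean coordinates, verifying that our cohomological integrand agrees with the classical one pointwise on $C_2(\Omega)$. My first observation is that $\iota\co C_2(\Omega)\hookrightarrow \Omega\times\Omega$ is an open embedding (the inclusion of the complement of the diagonal), so pullback under $\iota$ is simply restriction. Moreover, on the interior $C_2(\Omega)\subset C_2[\Omega]$ the surjection $p$ of Theorem~\ref{thm:config} is a diffeomorphism, and by construction $\alpha_x,\alpha_y,g^*\dVol$ are the $p$-pullbacks of their natural counterparts on $\Omega\times\Omega\setminus\Delta$. Consequently it suffices to show that the 6-form $\alpha_x\wedge\alpha_y\wedge g^*\dVol$, computed in standard coordinates on $\Omega\times\Omega\setminus\Delta$, equals $\tfrac{1}{4\pi}V(x)\times V(y)\cdot\tfrac{x-y}{|x-y|^3}\,\dVol_x\wedge\dVol_y$.

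Next I would write all three factors explicitly. The duality $\alpha(W_1,W_2)=\dVol(V,W_1,W_2)$ yields $\alpha=\iota_V\dVol = V^1\,dx_2\wedge dx_3+V^2\,dx_3\wedge dx_1+V^3\,dx_1\wedge dx_2$, and its pullback $\alpha_x$ ($\alpha_y$) to $\Omega\times\Omega$ is the same expression in the $x$- (resp.\ $y$-) variables. The Gauss map $g(x,y)=(y-x)/|y-x|$ is the composition of the translation $(x,y)\mapsto y-x$ with radial projection to $S^2$; since $p^*\dVol_{S^2}$ for the radial projection $p\co\R^3\setminus 0\to S^2$ is the well-known solid-angle form, setting $\eta_i:=y_i-x_i$ and $d\eta_i=dy_i-dx_i$ gives
\begin{equation*}
g^*\dVol_{S^2}=\frac{1}{4\pi|y-x|^3}\bigl(\eta_1\,d\eta_2\wedge d\eta_3+\eta_2\,d\eta_3\wedge d\eta_1+\eta_3\,d\eta_1\wedge d\eta_2\bigr).
\end{equation*}

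Now I compute the triple wedge. The key structural observation is that $\alpha_x\wedge\alpha_y$ is of bidegree $(2,2)$ in the $(dx,dy)$-splitting, so to produce the top form $\dVol_x\wedge\dVol_y$ (of bidegree $(3,3)$) the 2-form $g^*\dVol$ must contribute exactly one $dx$ and one $dy$. Expanding $d\eta_i\wedge d\eta_j=dy_i\wedge dy_j-dy_i\wedge dx_j-dx_i\wedge dy_j+dx_i\wedge dx_j$, only the two cross terms survive. Collecting the contribution — either by brute enumeration of the nine $V^i(x)V^j(y)$ products, or more compactly by writing $\alpha=\tfrac12 V^i\epsilon_{ijk}\,dx_j\wedge dx_k$ and applying the contraction identities $\epsilon_{ijk}\epsilon_{jkn}=2\delta_{in}$ — the coefficient of $\dVol_x\wedge\dVol_y$ reduces to $\tfrac{1}{4\pi|y-x|^3}\epsilon_{l\,j\,i}V^i(x)V^j(y)(y_l-x_l)$, which is exactly $\tfrac{1}{4\pi|x-y|^3}(V(x)\times V(y))\cdot(x-y)$.

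The only real obstacle is sign bookkeeping: the sign of the Gauss map ($y-x$ vs.\ $x-y$), the $(k+1)^2$-sign from swapping $\alpha_x$ and $\alpha_y$ in the Levi-Civita manipulation, and the orientation convention for $\dVol_x\wedge\dVol_y$ on $C_2[\Omega]$ from Lemma~\ref{lem:volumeform}. To guard against a stray minus sign I would verify the final formula on the representative case $V(x)=e_1$, $V(y)=e_2$, where both sides reduce transparently to $\tfrac{x_3-y_3}{4\pi|x-y|^3}\dVol_x\wedge\dVol_y$; this check pins down the signs and confirms the identification with the classical helicity integrand.
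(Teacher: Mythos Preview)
Your proof is correct and complete. Both your argument and the paper's are direct pointwise computations, but the computational strategy differs enough to be worth a comment.

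The paper works at an arbitrary point $p=(x,y)\in C_2(\Omega)$ and chooses a \emph{pointwise-adapted} right-handed orthonormal frame $\{u_i\}$ with $u_3$ parallel to $y-x$. In these coordinates the Gauss map is stationary in the $x_3,y_3$ directions, so $g^*\dVol$ has no $dx_3$ or $dy_3$ terms and its six remaining coefficients can be read off geometrically; the wedge $\Phi$ then collapses to a single term, $\tfrac{1}{4\pi|x-y|^2}(v_2w_1-v_1w_2)\,\dVol_x\wedge\dVol_y$, which matches the classical integrand written in the same frame.

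You instead work in fixed global Euclidean coordinates, invoke the solid-angle formula for $g^*\dVol_{S^2}$, and exploit the bidegree observation that only the $(1,1)$ piece of $g^*\dVol$ can pair with the $(2,2)$-form $\alpha_x\wedge\alpha_y$. The Levi--Civita contraction then delivers the cross-product expression directly. Your approach is more algebraic and avoids a separate frame choice at each point; the paper's is more geometric and keeps the intermediate expressions shorter. Either way the content is the same identity, and your sign check on $V(x)=e_1$, $V(y)=e_2$ is a sensible safeguard that the paper omits.
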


With the lemma in place, we now conclude that our definition of helicity really is the same as the standard one.
\begin{theorem}
\label{ftclassical}
For three-dimensional domains, the helicity of Definition~\ref{fthelicity}, equals the classical helicity (of equation~\ref{mhelicity}).  More explicitly, for a vector field $V$ dual to a 2-form~$\alpha$,
\begin{equation} \label{eqhelicity}
\int_{C_2[\Omega]} \alpha_x \wedge \alpha_y \wedge g^*\dVol_{S^2} = \frac{1}{4\pi} \int_{\Omega \times \Omega} {V(x) \times V(y) \cdot \frac{x-y}{|x-y|^3} \; \dVol_x \, \dVol_y}
\end{equation}
\end{theorem}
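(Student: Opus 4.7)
The plan is to derive Theorem~\ref{ftclassical} as a direct consequence of Lemma~\ref{lemma-int}: the lemma already asserts \emph{pointwise} equality of the two $6$-forms on the interior $C_2(\Omega)$, so all that remains for the \emph{integral} identity is to reconcile the differing ambient domains of integration, namely $C_2[\Omega]$ on the left and $\Omega\cross\Omega$ on the right.

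First, I will note that $\Phi$ is smooth on the compact manifold with corners $C_2[\Omega]$ and that $\partial C_2[\Omega]$ has codimension $1$, so
\begin{equation*}
\int_{C_2[\Omega]}\Phi \;=\; \int_{C_2(\Omega)}\Phi.
\end{equation*}
The inclusion $\iota\co C_2(\Omega)\hookrightarrow\Omega\cross\Omega$ is a diffeomorphism onto the open subset $\Omega\cross\Omega\setminus\Delta$, so combining change of variables with Lemma~\ref{lemma-int} gives
\begin{align*}
\int_{C_2(\Omega)}\Phi
&=\int_{C_2(\Omega)}\iota^*\!\left[\tfrac{1}{4\pi}\,V(x)\cross V(y)\cdot\tfrac{x-y}{|x-y|^3}\,\dVol_x\,\dVol_y\right]\\
&=\int_{\Omega\cross\Omega\setminus\Delta}\tfrac{1}{4\pi}\,V(x)\cross V(y)\cdot\tfrac{x-y}{|x-y|^3}\,\dVol_x\,\dVol_y.
\end{align*}

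To pass from this to the right-hand side of (\ref{eqhelicity}), I need to show that deleting the diagonal $\Delta$ does not affect the integral. The diagonal has codimension $3$ in $\Omega\cross\Omega$, while the classical integrand has a singularity of order $|x-y|^{-2}$ there; this is integrable against six-dimensional Lebesgue measure since $\int_0^\epsilon r^{-2}\cdot r^5\,dr<\infty$. Hence the improper integral over all of $\Omega\cross\Omega$ converges and equals the off-diagonal integral, completing the proof.

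The hard part is actually Lemma~\ref{lemma-int} itself, which I am invoking without proof. For context: its proof proceeds by a direct coordinate calculation, writing $\alpha = V_1\,dx_2\wedge dx_3 + V_2\,dx_3\wedge dx_1 + V_3\,dx_1\wedge dx_2$ and expanding
\begin{equation*}
g^*\dVol_{S^2}\;=\;\tfrac{1}{4\pi}\sum_{\substack{(i,j,k)\\ \text{cyclic}}}\tfrac{y_i-x_i}{|y-x|^3}\,(dy_j-dx_j)\wedge(dy_k-dx_k).
\end{equation*}
Since each summand of $\alpha_x\wedge\alpha_y$ carries exactly two $dx$'s and two $dy$'s, only the mixed $dx\wedge dy$ cross-terms in the Gauss factor can survive the wedge. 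The surviving coefficient of $\dVol_x\wedge\dVol_y$ then reassembles, via the standard $\epsilon$-tensor identities, into the triple product $V(x)\cross V(y)\cdot(x-y)/|x-y|^3$, and that index bookkeeping is the main obstacle in the overall argument.
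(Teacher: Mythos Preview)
Your proof is correct and follows essentially the same route as the paper's: both invoke Lemma~\ref{lemma-int} for the pointwise identity on $C_2(\Omega)$ and then argue that the compactification boundary and the diagonal contribute nothing to the respective integrals (the paper carries this out via an explicit $\epsilon$-exhaustion rather than your direct measure-zero and integrability appeal, but the content is identical). One minor slip: since $\Delta$ has codimension $3$, the normal volume element near the diagonal is $r^{2}\,dr$ rather than $r^{5}\,dr$, though the integrability conclusion is unaffected.
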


\begin{proof}
The idea of the proof is to remove small neighborhoods of the boundary of $\ctwoo$ and of the diagonal 
$\Delta$ in $\Omega \times \Omega$.  On the removed neighborhoods, the integrals each tend to zero.  On what remains, one integral is simply the pullback of the other.

Denote the integrand (\ref{hintegrand}) as $\mu$.  Let $U_\epsilon$ be an $\epsilon$-neighborhood of    $\partial \ctwoo$.  Then,
\[  \int_{C_2[\Omega]} \Phi  =  \int_{C_2[\Omega] - U_\epsilon } \Phi  \; + \; \int_{U_\epsilon} \Phi \]
As $\epsilon \rightarrow 0$, so does $\int_{U_\epsilon} \Phi$.  Then, the above lemma guarantees that $\Phi=\iota^*\mu$ on $C_2[\Omega] - U_\epsilon$.  Hence,
\[\int_{C_2[\Omega] - U_\epsilon} \Phi \; = \; \int_{C_2[\Omega] - U_\epsilon} \iota^*\mu \; = \; \int_{\iota(C_2[\Omega] - U_\epsilon )} \mu \]
But, the image $\iota(C_2[\Omega] - U_\epsilon )$ is $\Omega \times \Omega$ with some neighborhood $V_\epsilon$, dependent upon~$\epsilon$, removed.  As $\epsilon \rightarrow 0$, the set $V_{\epsilon}$ approximates $\Delta$.  

While the integral $\int_{\Omega \times \Omega} \mu$ is improper along the diagonal, it does in fact converge.  The contribution of $\mu$ integrated over neighborhoods of the diagonal converges to 0.  See \cite{MR1770976} for details.

Hence,  $\int_{\iota(\cnmo - U_\epsilon )} \mu$ limits to the classical helicity integral $\int_{\Omega \times \Omega} \mu$.  But it also limits to $\int_{C_2[\Omega]} \Phi$, so the two are equal.
\end{proof}

We now prove the above lemma in local coordinates at an arbitrary point in $C_2(\Omega)$.  
\begin{proof}[Proof of Lemma~\ref{lemma-int}]
A choice of coordinates on $\Omega$ induces a set of configuration coordinates on $C_2(\Omega)$.  At the point $p=(x,y) \in C_2(\Omega)$, we choose right-handed orthonormal coordinates $\{u_i\}$ on $\Omega$ so that $u_3$ points along the vector $y-x$ at $p$.  Via the map $\iota$ from Definition~\ref{cnmo}, these induce coordinates $\{x_i, y_i\}$ on $C_2(\Omega)$.  We now calculate $\Phi$ and the classical helicity integrand \eqref{hintegrand} in these coordinates at $p$.

Begin by writing 
\begin{equation*} 
V(x) = v_1 \ddu{1} + v_2 \ddu{2} + v_3 \ddu{3} \qquad \mathrm{and}  \qquad V(y) = w_1 \ddu{1} + w_2 \ddu{2} + w_3 \ddu{3}
\end{equation*}
so that
\begin{align*}
\alpha_x & = v_1 \; dx_2 \wedge dx_3 \; + \; v_2 \;dx_3 \wedge dx_1  \; + \;  v_3 \; dx_1 \wedge dx_2,  \\    \alpha_y &= w_1 \; dy_2 \wedge dy_3  \; + \;  w_2\;  dy_3 \wedge dy_1  \; + \;  w_3 \; dy_1 \wedge dy_2. 
\end{align*}
By the choice of coordinates, $\ds{\frac{x-y}{|x-y|^3} = - \frac{1}{|x-y|^2} \; \ddu{3}}$.  Then, the classical helicity integrand is
\begin{equation}
\label{int-coords}
\frac{1}{4\pi} V(x) \times V(y) \cdot \frac{x-y}{|x-y|^3} \dVol_x \, \dVol_y = \frac{1}{4\pi} \frac{1}{|x-y|^2} (v_2 w_1 - v_1 w_2) \dVol_x \, \dVol_y.
\end{equation}

Now we calculate $\Phi$ in these coordinates; we start with $g^*\dVol$, the pullback of the unit area form on $S^2$ via the Gauss map.  Moving the configuration points in the $x_3$ (or $y_3$) direction, that is moving them closer or further apart, has no impact upon the Gauss map $g$, so $g^*\dVol$ contains no $dx_3$ or $dy_3$ terms.  Writing it in terms of the other 2-forms, we get
\begin{align*}
g^*\dVol  = & c_1 dx_1 \wedge dx_2  \; + \; c_2 dy_1 \wedge dy_2  \; + \;  c_3 dx_1 \wedge dy_1  \; + \; c_4 dx_2 \wedge dy_2 \\
& + c_5 dx_1 \wedge dy_2  \; + \;  c_6 dy_1 \wedge dx_2.
\end{align*}

So which bi-vectors on $C_2(\Omega)$ span area on $S^2$ under $g$?  Neither $\ddx{1} \wedge \ddy{1}$ nor $\ddx{2} \wedge \ddy{2}$ does, so $c_3 = c_4 = 0$.  The other bi-vectors do; their effect must be normalized by the distance squared between $x$ and $y$ and also by the fact that the area of the sphere integrates to $1$  (we are using the unit area form $\dVol$).  Considering orientations, $c_1= c_2 = 1/4\pi|x-y|^2 = -c_5 = -c_6$.  So
\begin{equation*}
g^*\dVol  = \frac{1}{ 4\pi|x-y|^2} \left(dx_1 \wedge dx_2  \; + \; dy_1 \wedge dy_2  \; - \; dx_1 \wedge dy_2  \; - \;   dy_1 \wedge dx_2 \right).
\end{equation*}
We compute the 6-form $\Phi$ to be  $c_5 v_1 w_2 - c_6 v_2 w_1$, which by substituting becomes
\begin{align}
\Phi & =  \frac{1}{4\pi}\frac{1}{\norm{x-y}^2} (v_2 w_1 - v_1 w_2) \; dx_1 \wedge dx_2 \wedge dx_3 \wedge dy_1 \wedge dy_2\wedge dy_3 \label{phi-coords}
\end{align}
Pulling the classical helicity integrand (\ref{int-coords}) back via $\iota$, we obtain $\Phi$ since  $\iota^*(\dVol_x) = dx_1 \wedge dx_2 \wedge dx_3$ (and similarly for $\dVol_y$).
\end{proof}

\section{Understanding the properties of helicity via cohomology}

We have now defined helicity as a cup product of cohomology classes and have shown in the case of vector fields in $\R^3$ that our definition is the standard helicity integral. We now consider the consequences of our new definition and try to provide some motivation for the definition now that we have made it.

\subsection{Invariance of helicity under diffeomorphisms homotopic to the identity}

In the Introduction, we discussed the development of the Helicity Invariance theorem, from the earliest versions of helicity as an invariant of ideal MHD through Arnold's picture of helicity as invariant under all diffeomorphisms on simply-connected domains to the modern picture of helicity as invariant under diffeomorphisms which are homotopic to the identity.  Our redefinition of helicity allows us to give a quick proof of this invariance result.

\begin{proposition}
\label{homotopic-invariance}
Let $\Omega$ be any domain in $\R^{2k+1}$ and let $\alpha$ be a closed Dirichlet $(k+1)$-form on $\Omega$.

Let $f\co\Omega \cross I \rightarrow \R^{2k+1}$ be a smooth map. For each fixed $t$, define $f_t\co \Omega \rightarrow \Omega_t \subset \R^{2k+1}$ by $f_t(p) = f(p,t)$ and assume that each $f_t$ is a diffeomorphism, with $f_0$ the identity map. Let $\alpha_t = (f_t^{-1})^* \alpha$ on each $\Omega_t$.

Then $\Hel(\alpha)$ on $\Omega_0 = \Omega$ is equal to $\Hel(\alpha_1)$ on $\Omega_1$.
\end{proposition}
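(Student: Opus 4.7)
The plan is to exploit the cohomological characterization of helicity in Definition~\ref{fthelicity} together with the homotopy invariance of de Rham cohomology. The key observation is that the family $\{f_t\}$ will give rise to a smooth family of maps on a single fixed configuration space, and homotopic maps pull back the generator of $H^{2k}(S^{2k})$ to cohomologous forms.

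First I would transfer both helicity integrals to a common space. Since $f_1\co \Omega \to \Omega_1$ is a diffeomorphism (orientation-preserving, because $f_0 = \text{id}$ is and the family is smooth), Theorem~\ref{thm:config} provides a diffeomorphism of manifolds with corners $ev_2[f_1]\co C_2[\Omega] \to C_2[\Omega_1]$. Pulling back the helicity integrand for $\alpha_1$ via this map, and using $f_1^* \alpha_1 = \alpha$ (which is just the definition of $\alpha_1 = (f_1^{-1})^* \alpha$), yields
\begin{equation*}
\Hel(\alpha_1) = \int_{C_2[\Omega]} \alpha_x \wedge \alpha_y \wedge G_1^* \dVol_{S^{2k}},
\end{equation*}
where $G_1 := g_1 \circ ev_2[f_1]\co C_2[\Omega] \to S^{2k}$ and $g_1$ is the Gauss map on $C_2[\Omega_1]$.

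Next I would construct the homotopy relating $G_1$ to the original Gauss map $g_0$ on $C_2[\Omega]$. Define the family $G_t := g_t \circ ev_2[f_t]\co C_2[\Omega] \to S^{2k}$, where $g_t$ is the Gauss map on $C_2[\Omega_t]$. At $t=0$, $f_0 = \text{id}$ so $G_0 = g_0$. By the naturality of the evaluation map and the smoothness of $\{f_t\}$ in $t$, the assignment $(p,t) \mapsto G_t(p)$ is a smooth homotopy from $g_0$ to $G_1$. Consequently, by homotopy invariance of de Rham cohomology, $[G_1^* \dVol_{S^{2k}}] = [g_0^* \dVol_{S^{2k}}]$ in $H^{2k}(C_2[\Omega])$.

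The conclusion is then immediate: the class $[\alpha_x \wedge \alpha_y] \in H^{2k+2}(C_2[\Omega], \partial C_2[\Omega])$ is the same in both integrals (being defined intrinsically from $\alpha$), so the cup products
\begin{equation*}
[\alpha_x \wedge \alpha_y] \cup [G_1^* \dVol_{S^{2k}}] = [\alpha_x \wedge \alpha_y] \cup [g_0^* \dVol_{S^{2k}}]
\end{equation*}
agree in $H^{4k+2}(C_2[\Omega], \partial C_2[\Omega])$, and evaluating against the fundamental class gives $\Hel(\alpha_1) = \Hel(\alpha)$. The main obstacle is verifying that $G_t$ is smooth on all of the compactified configuration space (including the interior face $(12)$ where points collide) and jointly smooth in $t$; this follows because $ev_2[f_t]$ is a smooth map of manifolds with corners respecting the boundary stratification, and the Gauss map $g_t$ extends smoothly to $C_2[\Omega_t]$ by Lemma~\ref{gausslem}, but writing down explicit coordinates near face $(12)$ is where care is required.
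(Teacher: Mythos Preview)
Your proof is correct and takes essentially the same approach as the paper: both pull $\Hel(\alpha_1)$ back to $C_2[\Omega]$ via $ev_2[f_1]$ and use the smooth family $G_t$ of composite Gauss maps to relate the two integrands. The only difference is packaging---you invoke homotopy invariance of de Rham cohomology and the cohomological Definition~\ref{fthelicity} as black boxes, whereas the paper unwinds this into a direct Stokes' theorem computation on $C_2[\Omega] \times I$.
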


\begin{proof}
There is a natural projection $\Omega \cross I \rightarrow \Omega$ given by $(p,t) \mapsto p$. Pulling back under this map, we can extend $\alpha$ to a form on $\Omega \cross I$. Similarly, there are two obvious projections $\pi_x, \pi_y \co C_2[\Omega] \cross I \rightarrow \Omega \cross I$ given by $(x,y,t) \mapsto (x,t)$ and $(x,y,t) \mapsto (y,t)$. Pulling back under these maps, we can define a closed form $\alpha_x \wedge \alpha_y$ on $C_2[\Omega] \cross I$.

We next define an extended Gauss map on $C_2[\Omega] \cross I$ by 
\begin{equation*}
G(x,y,t) = \frac{f_t(x) - f_t(y)}{\norm{f_t(x) - f_t(y)}}.
\end{equation*}
This map allows us to construct a closed $2k$-form $G^*\dVol_{S^{2k}}$ on $C_2[\Omega] \cross I$.  We note that the $(4k+2)$ helicity form $\alpha_x \wedge \alpha_y \wedge G^*\dVol_{S^{2k}}$ is a closed Dirichlet form (by Lemma~\ref{alphavanishesonbdy}) on $C_2[\Omega]$.  By Stokes' theorem, the integral of this form over $\bdy (C_2[\Omega] \cross I)$ is zero.  But this means that 
\begin{equation}
\label{topbottom}
\int_{C_2[\Omega] \cross \{0\}} \alpha_x \wedge \alpha_y \wedge G^*\dVol_{S^{2k}} = \int_{C_2[\Omega] \cross \{ 1 \}} \alpha_x \wedge \alpha_y \wedge G^*\dVol_{S^{2k}}.
\end{equation}
We now prove that the left hand side is $\Hel(\alpha_0)$ and the right hand side is $\Hel(\alpha_1)$.

Since $f_0$ is the identity map, $G(x,y,0) = g(x,y)$ and the left hand side is clearly $\Hel(\alpha) = \Hel(\alpha_0)$.  On the right-hand side, we observe that by definition
\begin{equation*}
\Hel(\alpha_1) = \int_{C_2[\Omega_1]} (\alpha_1)_x \wedge (\alpha_1)_y \wedge g^*\dVol_{S^{2k}} 
               = \int_{C_2[\Omega_1]} (F^{-1})^*\alpha_x \wedge (F^{-1})^*\alpha_y \wedge g^*\dVol_{S^{2k}}.
\end{equation*}
where $F \co C_2[\Omega] \rightarrow C_2[\Omega_1]$ is the map of configuration spaces induced by $f_1$ (c.f., Theorem~\ref{thm:config}).  We note that $G(x,y,1)=g \circ F(x,y)$.    If we pull back the integral above to $C_2[\Omega] \cross \{1\}$ using $F^{-1}$, we get the right hand side of \eqref{topbottom}.
\begin{align*}
\Hel(\alpha_1) & = \int_{F^{-1}(C_2[\Omega_1])=C_2[\Omega]} \alpha_x \wedge \alpha_y \wedge F^*g^*\dVol_{S^{2k}}  \\
& = \int_{C_2[\Omega] \cross \{ 1 \}} \alpha_x \wedge \alpha_y \wedge G^*\dVol_{S^{2k}}.  \qedhere
\end{align*}
\end{proof}

\subsection{The invariance theorems for helicity and finite-type invariants}

We could have proved this theorem in a new way, parallel to the proof of invariance for the finite-type invariants for knots. Let $\Embed$, henceforth denoted~$\Emb$, consist of all diffeomorphic embeddings of $\Omega$ into $\R^{2k+1}$.  Maps in each connected component of $\Emb$ are diffeotopic to one another. Define a Gauss map $g_f$ by 
\begin{equation*}
((x,y),f) \in C_2[\Omega] \times \mathfrak{E} \mapsto \displaystyle{\frac{f(x)-f(y)}{|f(x)-f(y)|}}.
\end{equation*}

Consider the following diagram:
\begin{equation}
\label{embedbundle}
\begin{CD}
C_2[ \Omega] \times \Embed  @>g_f>>  S^{2k} \\
@VV{\pi}V						  \\
\Embed				
\end{CD}
\end{equation}
where $\pi$ is the natural projection in the trivial bundle $C_2[\Omega] \cross \Embed \rightarrow \Embed$. This is analogous to the corresponding diagram for knots introduced by Bott and Taubes~\cite{MR1295465}.

Define the $(4k+2)$-form 
\begin{equation}
\label{defgamma}
\Phi = \alpha_x \wedge \alpha_y \wedge g_f^{*}(\dVol)
\end{equation}
on $C_2[\Omega] \cross \Emb$ by pulling back $\alpha_x \wedge \alpha_y$ from $C_2[\Omega]$ and the volume form $\dVol$ from $S^{2k}$; we note that $\Phi$ is Dirichlet, by Lemma~\ref{alphavanishesonbdy}, and is closed.  We now observe that integration of $\Phi$ over the fiber in the bundle $C_2[\Omega] \cross \Emb \rightarrow \Emb$ produces a $0$-form $\Hel(f)$ on $\Emb$. The value of this $0$-form on any embedding is the helicity $\Hel((f^{-1})^*\alpha)$.

Using Stokes' Theorem, we compute
\begin{equation*}
d \Hel(f) = d\int_{C_2[\Omega]}{\Phi} = \int_{C_2[\Omega]}{d\Phi} - \int_{\bdy C_2[\Omega]}{\Phi} = 0 - 0,
\end{equation*}
since $\Phi$ is a closed Dirichlet form.  Since $d\Hel(f) = 0$, we conclude that $\Hel(f)$ is constant on each connected component of $\Emb$.  This reproves that helicity is invariant under diffeomorphisms homotopic to the identity.

\subsection{Invariance of helicity for forms and vector fields}

The original invariance theorem for helicity of vector fields (Theorem~\ref{classicalinvariance}) required that the diffeomorphisms be volume-preserving. Our theorems about the invariance of the helicity of forms, by contrast, have no such requirement. 

If we fix our attention on the case $2k+1 = 3$, and consider the duality between 2-forms and vector fields, we immediately observe where the volume-preserving condition arises.  Start with $V$ dual to $\alpha$ on $\Omega$ and a diffeomorphism $f$ that lies in the same component of $\E$ as the identity. The helicity of $\alpha$ on $\Omega$ is the same as the helicity of the 2-form $\tilde{\alpha} = (f^{-1})^* (\alpha)$ on $f(\Omega)$.  However if $f$ is not volume-preserving, $\tilde\alpha(\cdot, \cdot)$ may not be dual to the pushforward vector field $f_*V$ because the duality operation explicitly involves the volume form on $f(\Omega)$.  Hence, differential forms produce a stronger invariance than vector fields do.

\subsection{Invariance of helicity defined with cohomologous forms}

Another interesting feature of Definition~\ref{fthelicity} is that the helicity of $\alpha$ depends only on the cohomology classes of $[\alpha_x \wedge \alpha_y]$ and $[g^*\dVol_{S^{2k}}]$. In particular, this means that we may  define the helicity integrand using any volume form on $S^{2k}$ which integrates to $1$ over the sphere and get an alternate integral formula for helicity. We are motivated here by the combinatorial formula for linking number, which is derived from the Gauss integral formula for linking number by concentrating the mass of the sphere at the north pole. This gives us a recipe for constructing new helicity integrals.

\begin{definition}
Given a point $x = (x_1,x_2,x_3)$ in a domain $\Omega$ in $\R^3$, let $x^+(\Omega)$ be the set of points $y = (x_1,x_2,y_3) \in \Omega$ with $y_3 > x_3$.
\end{definition}
We then have
\begin{proposition}
\label{prop:combinatorial}
The helicity of a divergence-free vector field in $\R^3$ which is tangent to the boundary of a domain $\Omega$ is given by the $4$-dimensional integral
\begin{equation*}
\Hel(V) = \frac{1}{4\pi} \int_{x \in \Omega} \int_{y \in x^+(\Omega)} V(x) \cdot V(y) \cross (0,0,1) \; \dVol_x dy_3.
\end{equation*}   
\end{proposition}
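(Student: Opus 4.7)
The plan is to exploit the cohomological flexibility afforded by Definition~\ref{fthelicity}: since the helicity depends only on the de Rham class $[g^*\dVol_{S^2}] \in H^2(C_2[\Omega])$, I can substitute any closed $2$-form $\omega$ on $S^2$ with $\int_{S^2}\omega = 1$ into the helicity integral without changing its value. The key move is to choose $\omega = \omega_\epsilon$, a smooth bump form concentrated in an $\epsilon$-neighborhood of the north pole $N^* = (0,0,1) \in S^2$, with total integral $1$, and then to extract the 4-dimensional integral as the limit $\epsilon \to 0$.

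As $\epsilon \to 0$, the pulled-back form $g^*\omega_\epsilon$ concentrates on the 4-dimensional submanifold
\begin{equation*}
N = g^{-1}(N^*) = \{(x,y) \in C_2[\Omega] : y_1 = x_1,\ y_2 = x_2,\ y_3 > x_3\},
\end{equation*}
which is exactly the domain of integration appearing in the proposition: pairs $(x, y)$ with $y \in x^+(\Omega)$, parametrized by $(x_1, x_2, x_3, y_3)$. Thus the 6-dimensional helicity integral collapses to a 4-dimensional integral over $N$, and the remaining task is to identify the integrand.

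For the main computation I would work in local coordinates $(x_1, x_2, x_3, u_1, u_2, y_3)$ near $N$, with $u_i = y_i - x_i$. Writing $r = y_3 - x_3 > 0$, the Gauss map expands as $g = (u_1/r, u_2/r, 1) + O(\norm{u}^2)$, so $g^*\omega_\epsilon = \chi_\epsilon(u_1/r, u_2/r)\, r^{-2}\, du_1 \wedge du_2$ to leading order, where $\chi_\epsilon$ is the bump density. Expanding $\alpha_y$ via $dy_i = du_i + dx_i$ for $i=1,2$, only those terms free of $du_1, du_2$ survive the wedge with $g^*\omega_\epsilon$; combining these with $\alpha_x$ and extracting the top form on $C_2[\Omega]$ yields a coefficient proportional to $V_1(x) V_2(y) - V_2(x) V_1(y) = V(x) \cdot V(y) \cross (0,0,1)$. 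The fiber integration $\int \chi_\epsilon(u_1/r, u_2/r)\, r^{-2}\, du_1\, du_2 = 1$, obtained via the substitution $\tilde u_i = u_i/r$, eliminates the $r^{-2}$ factor and evaluates $V(y)$ at the limiting value $y = (x_1, x_2, y_3)$.

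The hard part will be keeping track of signs and of the normalization constant $\tfrac{1}{4\pi}$; this factor arises because $\dVol_{S^2}$ is normalized to integrate to $1$ over $S^2$ rather than to the standard area $4\pi$, exactly as in the explicit expression for $g^*\dVol$ derived in Lemma~\ref{lemma-int}. Boundary contributions do not appear in the limit: $\alpha_x \wedge \alpha_y$ is a Dirichlet form by Lemma~\ref{alphavanishesonbdy}, and the concentration of $g^*\omega_\epsilon$ occurs in the interior of $C_2[\Omega]$, away from both the interior blow-up face $(12)$ and the faces involving $\bdy\Omega$.
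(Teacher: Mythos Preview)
Your proposal is correct and follows essentially the same approach as the paper: replace $\dVol_{S^2}$ by a cohomologous sequence of bump forms concentrating at the north pole, invoke the cohomological invariance of Definition~\ref{fthelicity}, and pass to the limit. The paper's own proof is a one-paragraph sketch of precisely this idea, whereas you have filled in the local-coordinate computation of the limiting integrand in considerably more detail.
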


\begin{proof}
Consider a sequence of $2$-forms on $S^2$ converging to the $\delta$-form which concentrates the area of the sphere at the north pole where each has integral $4\pi$ over the entire sphere. These forms are cohomologous as $2$-forms on $S^2$ to the standard area form, so their pullbacks generate cohomologous $2$-forms on $C_2[\Omega]$. This means that the helicities derived from the forms in the sequence are all equal to the standard helicity. But the limit of these integrals is the formula above.
\end{proof}

We now do an explicit helicity computation using the formula to check that it works. It is an old theorem of Moffatt~\cite{mof1} and Berger and Field~\cite{MR770136} that the helicity of a divergence-free field tangent to the boundary of a pair of linked tubes is equal to the helicity of the fields in each tube plus twice the linking number of the tubes multiplied by the square of the flux of the field in the tubes (see~\cite{MR2002f:53002} for a more general version of this theorem).  Imagine then, a pair of singly-linked tubes that have rectangular cross-section with width $w$ and height $h$ and one overcrossing and that contain unit length fields parallel to the walls.  We will assume that at the overcrossing the tubes are rectangular boxes in parallel planes, as below in Figure~\ref{helicityfig}.
\begin{figure}[ht]
\begin{overpic}{squarehopf}
\put(22,23){$A$}
\put(87,6){$B$}
\end{overpic}
\hspace{0.25in}
\begin{overpic}[height=1.7in]{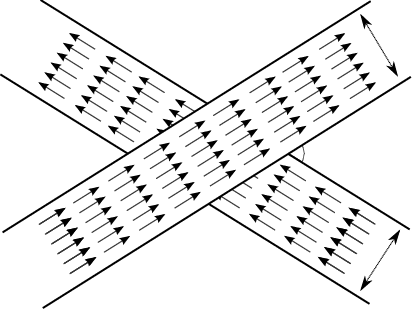}
\put(93.5,65){$w$}
\put(76,36){$\theta$}
\put(93.5,8.5){$w$}
\end{overpic}
\caption{A pair of singly linked tubes $A$ and $B$ with rectangular cross section and a unit-length vector field tangent to the boundary of the tubes. On the right, we see the crossing from above. The crossing appears twice in the four-dimensional integral of Proposition~\ref{prop:combinatorial}. We are able to explicitly compute the helicity of this configuration using the proposition and show that it is equal to twice the product of the fluxes of the field in the tubes.}
\label{helicityfig}
\end{figure}

We can arrange the tubes so that for any pair $\{x,y\}$ with $x$ and $y$ in the same tube and $y \in x^+(\Omega)$, the vectors $V(x)$ and $V(y)$ are collinear.  Since the integrand above vanishes for collinear vectors, these pairs will not contribute to the integral. We may further arrange the tubes so that there are only two regions where $x$ and $y$ are in different tubes and $y \in x^+(\Omega)$.  The overcrossing pictured is one region, with $x$ lying in the right side of ring $A$ and $y$ above it in the upper segment of ring $B$.  The other region has $x$ in the lower section of $B$ and $y$ in $A$. 

We now need to integrate over these pairs. The triple product in the integrand can be rewritten $(0,0,1) \cdot V(x) \cross V(y)$. Since $V(x)$ and $V(y)$ lie in horizontal planes in this region, the integrand always takes the constant value $\sin\theta$.  On the other hand, the domain of integration (for $x$) is a prism of height $h$ whose base is a parallelogram of length $w/\sin\theta$ and width $w$; the domain of integration for $y$ is a line segment above each point in the $x$ prism of length $h$.  Thus the total (4-dimensional) volume of integration is $w^2 h^2/\sin\theta$, and the value of the integral is $w^2h^2 = (wh)^2$. This is exactly the square of the flux of the vector field, and we note that the crossing is positively oriented.  The other region with $y \in x^+(\Omega)$ has $x$ in the lower section of tube $B$ and $y$ in tube $A$. This configuration is similar to the first, and makes the same contribution to the integral.

\section{Helicity as a wedge product with a primitive}

Arnol'd~\cite{MR891881} defines helicity for 2-forms on simply connected 3-manifolds as the integral of the wedge product of a form $\alpha$ and a primitive form $\beta$ with $d\beta = \alpha$.  In section~\ref{redefining}, we provided an alternate definition in terms of cohomology classes on configuration spaces.  In this section, we reconcile these two approaches.  Our efforts culminate in the next section with a formula for the change in helicity under an arbitrary diffeomorphism of $\Omega$.   

\subsection{Constructing a primitive form.}
We start by observing that there is a natural fiber bundle
\begin{equation}
\label{bsbundle}
\begin{CD}
C_{2,1}[ \Omega] @>i>>  C_2[\Omega] \\
@. @VV{\pi_x}V						  \\
@. \Omega		
\end{CD}
\end{equation}
where $\pi_x$ is the projection where $(x,y) \mapsto x$. Consider the $(k+1)$-form $\alpha_y = \pi_y^*\alpha$ from Lemma~\ref{alpha} generated by pulling back $\alpha$ from $\Omega$ in the corresponding projection $\pi_y$ where $(x,y) \mapsto y$, and the $2k$-form $g^*\dVol$ generated by pulling back the volume form on $S^{2k}$ under the Gauss map. We now develop some standard properties of this bundle.

\begin{definition}[cf.~\cite{MR1612569}, Definition 4.18]
We define the \emph{Biot-Savart operator for forms} to be the operation on $(k+1)$-forms $\alpha$ on $\Omega$ defined by the integration over the fiber in the bundle \eqref{bsbundle}, 
\begin{equation}
\BS(\alpha) = \frac{1}{\Vol(S^{2k})} \int_{C_{2,1}[\Omega]} \alpha_y \wedge g^*\dVol .
\end{equation}
\end{definition}

Proposition~\ref{alphaisexact} guarantees that any closed Dirichlet $(k+1)$-form $\alpha$ is exact.  We now show that the Biot-Savart operator constructs a primitive for $\alpha$.

\begin{proposition}[cf.~\cite{MR1612569}, Proposition 4.19]
\label{bsoperator}
If $\alpha$ is a closed Dirichlet $(k+1)$-form on $\Omega$, then $BS(\alpha)$ is a primitive for $\alpha$:
\begin{equation}
d(\BS(\alpha)) = \alpha.
\end{equation}
\end{proposition}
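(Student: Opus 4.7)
The plan is to apply the Stokes-type theorem for fiber integration to the bundle $\pi_x \co C_2[\Omega] \to \Omega$ (from diagram~\eqref{bsbundle}), whose fiber at $x$ is a copy of $C_{2,1}[\Omega]$ with the first point fixed to $x$, and to show that the resulting fiberwise boundary integral collapses to $\alpha$ itself.

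First I would invoke the Leibniz-type identity for fiber integration: for a compact oriented fiber with boundary and a smooth form $\omega$ on the total space,
\[
d\,\pi_*\omega \;=\; \pi_*\,d\omega \;\pm\; (\partial\pi)_*\omega,
\]
where $\partial\pi$ is the restriction of $\pi$ to the fiberwise boundary. Applied to $\omega = \alpha_y \wedge g^*\dVol$, closedness of $\alpha_y$ (Lemma~\ref{alpha}) and of $g^*\dVol$ (Lemma~\ref{gausslem}) gives $d\omega = 0$, so $d\BS(\alpha)$ reduces to a fiberwise boundary integral scaled by $1/\Vol(S^{2k})$.

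Next I would decompose the fiberwise boundary. By Corollary~\ref{c2obdy} applied inside each fiber, the boundary at $x$ consists of the outer face $\{y \in \partial\Omega\}$ together with the exceptional sphere $S^{2k}_x$ from the face $(12)$. On the outer face, $\alpha_y$ is the pullback of $\alpha|_{\partial\Omega} = 0$ by the Dirichlet condition on $\alpha$, so this piece contributes nothing. On the exceptional sphere, I would use the boundary spherical coordinates $(z,r,u)$. By the computation in Lemma~\ref{alphavanishesonbdy}, $\alpha_y|_{r=0}$ can be written as $\alpha(z)$ purely in the $dz=dm$ directions, while $g(z,u) = u$ on this face makes $g^*\dVol|_{r=0}$ the standard unit volume form on the $u$-sphere. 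Lifting a basis of $T_x\Omega$ to the $dz$-directions at the fiber over $x$ and integrating against fiber-tangent vectors in the $du$-directions, the two factors uncouple and the integral becomes
\[
\alpha(x) \cdot \int_{S^{2k}} \dVol \;=\; \alpha(x) \cdot \Vol(S^{2k}).
\]
Dividing by $\Vol(S^{2k})$ yields $d\BS(\alpha) = \pm\alpha$.

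The main obstacle is not any single step but rather orientation and sign bookkeeping: the sign in the fiber-Stokes identity (depending on $\deg\omega$ and the fiber dimension $2k+1$), the induced orientation of the exceptional sphere as a boundary of the blown-up fiber versus the standard outward orientation on $S^{2k}$, and the Koszul reordering of $\alpha_y$ past $g^*\dVol$ all need to be tracked. With the orientation conventions already built into Definition~\ref{fthelicity}, and checked against the classical Biot-Savart law in $\R^3$ via Theorem~\ref{ftclassical}, these signs compose to $+1$, giving $d\BS(\alpha) = \alpha$.
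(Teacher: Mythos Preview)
Your proposal is correct and follows essentially the same route as the paper: fiber-Stokes reduces $d\BS(\alpha)$ to the fiberwise boundary integral, the $\partial\Omega$ face vanishes by the Dirichlet hypothesis, and on the $(12)$-sphere $\alpha_y$ restricts to $\alpha_x$ while $g^*\dVol$ becomes the sphere volume form. The only difference is that the paper tracks the sign explicitly---observing that the outward normal to the fiber points \emph{into} the blown-up $S^{2k}$, so the induced boundary orientation is opposite to the standard one and cancels the minus sign from Stokes---rather than deferring to consistency with the classical formula.
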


\begin{proof}
We will use Stokes' Theorem for fiber bundles $F^n \rightarrow E \rightarrow B$ (see Appendix~\ref{gv}). If $\beta$ is a $k$-form on $E$, then $\int_F \beta$ is a $(k-n)$-form on $B$, and 
\begin{equation} \label{stokes}
d \int_F \beta = \int_F d\beta - \int_{\bdy F} \beta.
\end{equation}
By definition, $\BS(\alpha)$ is the integration over the fiber $C_{2,1}[\Omega]$ of the form 
$\alpha_y \wedge g^*\dVol$. Since $\alpha_y \wedge g^*\dVol$ is closed on $C_{2,1}[\Omega]$, we see $d\BS(\alpha) = \int_{\bdy C_{2,1}[\Omega]} \alpha_y \wedge g^*\dVol$.

Now consider the structure of the boundary of $C_{2,1}[\Omega]$. We are assuming that $x$ is the fixed point, so there are two codimension-one faces of $\bdy C_{2,1}[\Omega]$: one consists of a copy of $\bdy\Omega$ in the form of pairs $(x,y)$ where $y$ is on the boundary; the other is a copy of $S^{2k}$ where $y$ approaches $x$ from some direction.  We note that the outward normal to the fiber points into this $S^{2k}$.

On the $\bdy\Omega$ face, $\alpha_y$ vanishes so there is no contribution to the integral. We now consider the term $\int_{S^{2k}} \alpha_y \wedge g^*\dVol$. What is this form? 

In the definition of integration over the fiber (see Appendix~\ref{gv}), we see that to integrate a $(3k+1)$-form over a $2k$-dimensional fiber and get a resulting $(k+1)$-form, we must write each tangent space to the total space of the bundle as a product of the $2k$-dimensional tangent space to the fiber and the tangent space to the base and decompose our $(3k+1)$-form locally into a wedge of forms on each of these spaces. The fiber portion of the form is then integrated, while the base portion remains.

On $C_{2}[\Omega]$ we now establish the coordinates $x_i$, $z_i = y_i - x_i$, and write $z = r u$, where $u$ is a unit vector. In the bundle \eqref{bsbundle}, the base directions are the $\bdy / \bdy x_i$ and the fiber directions are the $\bdy / \bdy z_i$.

How do these coordinates extend to the boundary of the fiber?  There is no difficulty in defining these coordinates on the boundary face where $y \in \bdy\Omega$. But on the boundary face (12) where the two configuration points coalesce, i.e., where $r=0$, the situation requires a bit more care.  Unlike the standard polar coordinates, in which the $u_i$ will have no meaning when $r=0$, our compactification of the configuration space ensures that the $S^{2k}$ defined by the $u$ coordinates will still be present when $r=0$. 

We now consider the forms $\alpha_y$ and $g^*\dVol$ on the boundary face where $r=0$ with an eye toward integration over the fiber. The form $\alpha_y$ is written entirely in terms of elementary forms chosen from the $dy_i$. But $dy_i = dx_i + dz_i$. And $dz_i = u_i dr + r du_i$, so on this face $\alpha_y$ is written entirely in terms of $dr$ and the $dx_i$. In fact, $\alpha_y$ contains a precise copy of $\alpha_x$ together with a collection of other terms involving $dr$. When we pull this form back to the boundary, the $dr$ terms vanish, leaving only a copy of~$\alpha_x$. On the other hand, the form $g^*\dVol$ is exactly the volume form on the boundary~$S^{2k}$, as the Gauss map in these coordinates is just $g(x,r,u) = u$.  Integrating over the fiber, we obtain
\begin{equation*}
- \int_{\bdy C_{2,1}[\Omega]} \alpha_y \wedge g^*\dVol = \int_{S^{2k}} \alpha_y \wedge g^*\dVol = (\Vol{S^{2k}}) \alpha_x .
\end{equation*}     
Since the standard $2k$-sphere has the opposite orientation of $\bdy C_{2,1}[\Omega]$, the leading minus sign (from \eqref{stokes}) does not appear after the first equality.
\end{proof}

Inspired by the theory of self-adjoint curl operators in dimension $3$ (a long story, stretching from \cite{MR1055988} to \cite{hiptmair-2008}), we observe that 
\begin{lemma}
\label{self adjoint}
$\BS$ is a self-adjoint operator on closed Dirichlet $(k+1)$-forms on $\Omega$, for odd $k$. For any two such forms $\alpha$ and $\beta$, 
\begin{equation*}
\int_{\Omega} \alpha \wedge \BS(\beta) = (-1)^{k+1} \int_{\Omega} \BS(\alpha) \wedge \beta.
\end{equation*}
\end{lemma}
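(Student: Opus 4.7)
The plan is to express both sides as integrals over the full configuration space $C_2[\Omega]$ and then relate them via the swap involution $\sigma \co C_2[\Omega]\to C_2[\Omega]$ defined by $\sigma(x,y)=(y,x)$. Write $c = 1/\Vol(S^{2k})$. First I would unfold $\BS(\beta)$ using the projection formula (Fubini for fiber integration) in the bundle \eqref{bsbundle}: since $\pi_x^*\alpha = \alpha_x$,
\begin{equation*}
\int_\Omega \alpha \wedge \BS(\beta) = c\int_\Omega \alpha\wedge \pi_{x*}(\beta_y\wedge g^*\dVol) = c\int_{C_2[\Omega]} \alpha_x\wedge \beta_y\wedge g^*\dVol.
\end{equation*}

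For the other side, the fiber-integrated form $\pi_{x*}(\alpha_y\wedge g^*\dVol)$ has degree $k$ (the integrand has degree $3k+1$ and the fiber has dimension $2k+1$), so commuting $\beta$ past it contributes a sign $(-1)^{k(k+1)}=+1$. A second application of the projection formula then gives
\begin{equation*}
\int_\Omega \BS(\alpha)\wedge \beta = c\int_{C_2[\Omega]} \beta_x\wedge \alpha_y\wedge g^*\dVol.
\end{equation*}

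Now I would invoke the involution $\sigma$ to relate the two integrands. Two facts control the signs: (i) $\sigma$ is orientation-reversing on $C_2[\Omega]$ because $\Omega$ has odd dimension $2k+1$, so swapping two factors carries sign $(-1)^{(2k+1)^2} = -1$; and (ii) the Gauss map satisfies $g\circ\sigma = a\circ g$, where $a$ is the antipodal map on $S^{2k}$, which has degree $-1$, hence $\sigma^*g^*\dVol = -g^*\dVol$. Combining these with $\sigma^*\alpha_x = \alpha_y$, $\sigma^*\beta_y = \beta_x$, and the graded commutativity $\alpha_y\wedge\beta_x = (-1)^{(k+1)^2}\beta_x\wedge\alpha_y = (-1)^{k+1}\beta_x\wedge\alpha_y$, I compute
\begin{equation*}
\sigma^*\bigl(\alpha_x\wedge \beta_y\wedge g^*\dVol\bigr) = -\alpha_y\wedge\beta_x\wedge g^*\dVol = (-1)^{k}\,\beta_x\wedge\alpha_y\wedge g^*\dVol.
\end{equation*}
Integrating and using orientation-reversal of $\sigma$ yields
\begin{equation*}
-\int_{C_2[\Omega]}\alpha_x\wedge \beta_y\wedge g^*\dVol = (-1)^{k}\int_{C_2[\Omega]} \beta_x\wedge\alpha_y\wedge g^*\dVol,
\end{equation*}
which after rearrangement gives exactly $\int_\Omega\alpha\wedge \BS(\beta) = (-1)^{k+1}\int_\Omega \BS(\alpha)\wedge \beta$. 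For odd $k$ the sign is $+1$, so $\BS$ is genuinely self-adjoint on closed Dirichlet $(k+1)$-forms.

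The main obstacle is bookkeeping: the identity is purely a sign computation, and one must carefully track three independent sources of signs—the graded commutativity of wedge products of odd-degree forms, the orientation behavior of $\sigma$ on a product of odd-dimensional manifolds, and the degree of the antipodal map on $S^{2k}$—to see that they conspire to produce exactly $(-1)^{k+1}$. The rest is a routine application of the projection formula and Fubini for integration over the fiber.
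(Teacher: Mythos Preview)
Your proof is correct and takes a genuinely different route from the paper's. The paper argues via Stokes' theorem on $\Omega$: since $d(\BS(\alpha)\wedge\BS(\beta)) = \alpha\wedge\BS(\beta) + (-1)^k\BS(\alpha)\wedge\beta$, the difference of the two sides equals $\int_{\bdy\Omega}\BS(\alpha)\wedge\BS(\beta)$; this boundary integral is then shown to vanish by observing that $[\BS(\alpha)]$ and $[\BS(\beta)]$ both lie in the $H^k(\Omega)$-summand of $H^k(\bdy\Omega)$, on which the cup product is trivial (Theorem~\ref{alexander basis existence}). Your argument instead unfolds both sides to integrals over $C_2[\Omega]$ and relates them by the swap involution $\sigma$, exactly in the spirit of Proposition~\ref{even-k}. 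This is more elementary---it avoids the Alexander basis machinery entirely---and in fact proves the identity for \emph{arbitrary} $(k+1)$-forms, not just closed Dirichlet ones, since you never use those hypotheses. The paper's approach, on the other hand, exposes the cohomological reason behind the identity: both Biot--Savart images land in a Lagrangian subspace of $H^k(\bdy\Omega)$, a fact that becomes important later in Section~\ref{sec:invariance}.
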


\begin{proof}
We observe that 
\begin{equation*}
d(\BS(\alpha) \wedge \BS(\beta)) = \alpha \wedge \BS(\beta) + (-1)^k \BS(\alpha) \wedge \beta.
\end{equation*}
Integrating both sides over $\Omega$, we see that we must prove that $\int_{\bdy\Omega} \BS(\alpha) \wedge \BS(\beta) = 0$. Since $\BS(\alpha)$ and $\BS(\beta)$ are closed forms on the boundary, this integral depends only on the cup product of the cohomology classes represented by $\BS(\alpha)$ and $\BS(\beta)$ in $H^k(\bdy\Omega)$. 

Borrowing from Theorem~\ref{alexander basis existence} of the Appendix, we know that the de Rham cohomology group $H^k(\bdy\Omega)$ splits into two subspaces: forms with no circulation around $k$-cycles which bound outside $\Omega$ and forms with no circulation around $k$-cycles which bound inside $\Omega$. Since $\alpha$ and $\beta$ vanish outside $\Omega$, $\BS(\alpha)$ and $\BS(\beta)$ are in the first subspace. Further, Theorem~\ref{alexander basis existence} asserts that the cup product of any two forms in the same subspace vanishes. This shows that $[\BS(\alpha)] \cup [\BS(\beta)] = 0$, as desired.
\end{proof}

\subsection{An equivalent definition of helicity as a potential.}

Motivated by Arnold's approach, can we express helicity as the integral of $\alpha \wedge \beta$ for an arbitrary primitive~$\beta$ of $\alpha$?  Unfortunately not, except in special circumstances (see \cite{MR1770976}), since helicity is not gauge-invariant; we must choose an appropriate primitive.  Below, we show that $\BS(\alpha)$ is an appropriate primitive and that we recover the same helicity as in Definition~\ref{fthelicity}.  

\begin{definition}[Primitive definition of helicity]
\label{arnoldhelicity}
Let $\alpha$ be a closed Dirichlet $(k+1)$-form on a compact domain $\Omega$ in $\R^{2k+1}$. The Hodge decomposition theorem for manifolds with boundary tells us that $\alpha$ is exact.  Then the \emph{``Arnol'd helicity''} of $\alpha$ is given by
\begin{equation}
\Hel(\alpha) = \int_\Omega \alpha \wedge \BS(\alpha).  
\end{equation}
\end{definition}

The following proposition ensures that ``Arnol'd helicity'' is equivalent to our original definition of helicity; thus we will refer to both as \emph{helicity}.  The proof is almost immediate.
\begin{proposition}
\label{prop:potential}
Given any closed Dirichlet $(k+1)$-form $\alpha$ on a domain $\Omega$ in $\R^{2k+1}$, the ``Arnol'd helicity'' (via integrating a specific primitive) 
\begin{equation*}
\Hel(\alpha) = \int_\Omega \alpha \wedge \BS(\alpha)
\end{equation*}
of Definition~\ref{arnoldhelicity} is equal to the helicity (via cohomology classes)
\begin{equation*}
\Hel(\alpha) = \int_{C_2[\Omega]} \alpha_x \wedge \alpha_y \wedge g^*\dVol
\end{equation*}
of Definition~\ref{fthelicity}.
\end{proposition}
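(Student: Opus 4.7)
The plan is to apply Fubini's theorem (integration over the fiber) in the bundle $\pi_x \co C_2[\Omega] \to \Omega$, $(x,y) \mapsto x$, with fiber $C_{2,1}[\Omega]$ --- the same bundle used to define the Biot--Savart operator. The first step is to note that, by Lemmas~\ref{alpha} and~\ref{gausslem}, the $(3k+1)$-form $\alpha_y \wedge g^*\dVol$ is a smooth closed form on the compact manifold-with-corners $C_2[\Omega]$, so its fiber integral along $C_{2,1}[\Omega]$ defines a smooth $k$-form on $\Omega$; by the very definition of $\BS$, this fiber integral is exactly $\BS(\alpha)$ (once the $1/\Vol(S^{2k})$ normalization in the definition of $\BS$ is absorbed into the sphere-volume convention for $g^*\dVol$).

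The central algebraic step is the projection formula for integration over the fiber. Because $\alpha_x = \pi_x^*\alpha$ is a pullback from the base, it can be factored through the fiber integration, giving
\[
(\pi_x)_*\bigl(\alpha_x \wedge \alpha_y \wedge g^*\dVol\bigr) \;=\; \alpha \wedge (\pi_x)_*\bigl(\alpha_y \wedge g^*\dVol\bigr) \;=\; \alpha \wedge \BS(\alpha).
\]
Fubini's theorem then collapses the integral over $C_2[\Omega]$ into an integral over $\Omega$:
\[
\int_{C_2[\Omega]} \alpha_x \wedge \alpha_y \wedge g^*\dVol
\;=\; \int_\Omega (\pi_x)_*\bigl(\alpha_x \wedge \alpha_y \wedge g^*\dVol\bigr)
\;=\; \int_\Omega \alpha \wedge \BS(\alpha),
\]
which is precisely the identity asserted by the proposition.

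The main technical obstacle is that Fubini and the projection formula must be applied in the manifold-with-corners setting: each fiber $C_{2,1}[\Omega]$ has its own boundary, consisting of a copy of $\bdy\Omega$ together with the blown-up sphere $S^{2k}$. This should not cause trouble here because we only \emph{compute} the fiber integral; we do not integrate by parts on it. (By contrast, the proof of Proposition~\ref{bsoperator} \emph{did} apply Stokes on the fiber, and the boundary contribution from the $S^{2k}$ face was precisely what produced the identity $d\BS(\alpha) = \alpha$.) Since $\alpha_y \wedge g^*\dVol$ is smooth on each compact fiber, the pointwise projection formula is the standard one for pullback of differential forms, and the global equality of integrals follows by a routine partition-of-unity argument together with the fact that $C_2[\Omega] \setminus C_2(\Omega)$ has measure zero in $C_2[\Omega]$.
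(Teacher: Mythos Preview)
Your proof is correct and follows exactly the same route as the paper: apply the projection formula for integration over the fiber in the bundle $C_{2,1}[\Omega] \hookrightarrow C_2[\Omega] \xrightarrow{\pi_x} \Omega$, using that $\alpha_x = \pi_x^*\alpha$ is pulled back from the base, to collapse the $C_2[\Omega]$-integral to $\int_\Omega \alpha \wedge \BS(\alpha)$. Your additional remarks on the manifold-with-corners setting and the sphere-volume normalization are welcome clarifications that the paper omits.
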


\begin{proof}
Using the bundle \eqref{bsbundle} and the properties of integration over the fiber, we see that 
\begin{equation*}
\int_{C_2[\Omega]} \alpha_x \wedge \alpha_y \wedge g^*\dVol = \int_\Omega \alpha_x \wedge \left( \int_{C_{2,1}[\Omega]} \alpha_y \wedge g^*\dVol \right) = \int_\Omega \alpha_x \wedge \BS(\alpha_x).
\end{equation*}
\end{proof}

\section{When is helicity invariant under a diffeomorphism?} \label{sec:invariance}

We have now completed our revision of the standard theory of helicity.  With this in hand, we may now fully and precisely answer the question:  is helicity a diffeomorphism invariant?  The answer is negative, except in certain special cases (for one such case, see Proposition~\ref{homotopic-invariance}).  

In the main result of this section, we explicitly calculate the change in helicity of~$\alpha$ under an arbitrary diffeomorphism of $\Omega$.  Specific cases of this formula reproduce the known invariance results about helicity for domains in $\R^3$ (Theorem~\ref{classicalinvariance} and Proposition~\ref{homotopic-invariance}).  

After describing the topology of domains in $\R^{2k+1}$, we first derive the formula for the case where~$\Omega$ is a solid torus in~$\R^3$ before describing the general result.  Even though helicity is the zero function for subdomains $\Omega \subset \R^{4k+1}$ (i.e., $k$ even; see Proposition~\ref{even-k}), we carry out this computation in general and note the instances in which the parity of $k$ matters.  As a check, we confirm that for the~$k$ even case, helicity is invariant under all diffeomorphisms.

We begin by fixing an orientation-preserving diffeomorphism $f\co\Omega \rightarrow \Omega'$ between domains in~$\R^{2k+1}$. Let $\alpha$ be a closed Dirichlet $(k+1)$-form on $\Omega$.  Then its pullback $\alpha' = \fia$ is a closed Dirichlet $(k+1)$-form on $\Omega'$.  By Proposition~\ref{alphaisexact}, both $\alpha$ and $\fia$ are exact.  Did the helicity of $\alpha$ change under the map $f$?  That is, does $\Hel(\alpha)$ equal $\Hel(\alpha')$?  We compute 
\begin{eqnarray*}
\Hel(\alpha) & = & \int_{\Omega} \alpha \wedge \BS(\alpha), \\
\Hel(\alpha') & = & \int_{\Omega' = f(\Omega)}\fia \wedge \BS \left(\fia \right) \\
	& = & \int_{\Omega} f^*\left( \fia \wedge \BS(\fia) \right) \\
	& = & \int_{\Omega} \alpha \wedge f^*\BS\left(\fia \right).
\end{eqnarray*}

Both terms integrate $\alpha$ wedged with a $k$-form, either $\BS(\alpha)$ or $f^*\BS(\alpha')$.  Both $k$-forms are both primitives for $\alpha$, since the exterior derivative commutes with pullbacks, i.e., $df^*\BS\left(\fia \right) = f^* \left( d\BS\left(\fia \right)\right) = f^*\left(\fia \right) = \alpha$.  However, $\BS$ does not in general commute with pullbacks, and so these two $k$-forms are not necessarily equal.  

So we calculate the difference
\begin{equation}
\label{helicity-change}
	\Hel(\alpha') - \Hel(\alpha) = \int_{\Omega} \alpha \wedge \left( f^*\BS(\alpha') - \BS(\alpha) \right)
\end{equation}

In general terms, given two primitives $\beta$ and $\tbeta$ of $\alpha$, we wish to compute  $\int_{\Omega}   \alpha\wedge (\tbeta - \beta)$. We first observe that the integrand is an exact $(2k+1)$-form. In particular,
\begin{equation*}
	d\left((\tbeta - \beta)  \wedge (\tbeta + \beta) \right) =  2 (-1)^{k^2 + 2k} \alpha \wedge (\tbeta - \beta) = 2 (-1)^k \alpha \wedge (\tbeta - \beta).
\end{equation*}
Upon simplifying this potential $2k$-form, we conclude that
\begin{equation}
	 \left((\tbeta - \beta)  \wedge (\tbeta + \beta) \right) = 
	 \begin{cases} 
	    2 \tbeta \wedge \beta & \text{if $k$ is odd},\\ 
	    \tbeta \wedge \tbeta - \beta \wedge \beta & \text{if $k$ is even}. 
	 \end{cases} 
\end{equation}
Applying Stokes' theorem, we obtain
\begin{eqnarray*}
	\int_{\Omega}   \alpha\wedge(\tbeta - \beta) & = &  \int_{\Omega} { \tfrac{1}{2} (-1)^k d\left((\tbeta - 		\beta)  \wedge (\tbeta + \beta) \right)} \\
	& = &  (-1)^k \int_{\bdy\Omega} { \tfrac{1}{2} \left((\tbeta - \beta)  \wedge (\tbeta + \beta) \right)} \\
	& = & 
	\begin{cases} 
	    \int_{\bdy\Omega} \beta \wedge \tbeta & \text{if $k$ is odd},\\ 
	    \tfrac{1}{2} \int_{\bdy\Omega} \tbeta \wedge \tbeta - \beta \wedge \beta & \text{if $k$ is even}. 
	 \end{cases} 
\end{eqnarray*}
Since both $\beta$ and $\tbeta$ are primitives of $\alpha$, and $\alpha$ is Dirichlet, they both are closed on the boundary. On the $2k$-manifold $\bdy\Omega$, the Hodge decomposition theorem tells us that every closed $k$-form can be written as the sum of an exact $k$-form and a $k$-form which represents a de Rham cohomology class in $H^k(\bdy\Omega)$. So write 
\begin{equation*}
\label{beta-gamma}
	\beta = d\phi + \gamma, \quad \tbeta = d\tphi + \tgamma.  
\end{equation*}
We now use this decomposition to analyze $\beta \wedge \tbeta$ on $\bdy\Omega$. Since $\tbeta$ is closed, 
\begin{equation*}
	d\phi \wedge \tbeta = d(\phi \wedge \tbeta).
\end{equation*}
Stokes' Theorem implies that the integral of an exact form on a boundary is zero; thus, $\int_{\bdy\Omega} d\phi \wedge \tbeta = 0$. Continuing this argument, we see that $\int_{\bdy \Omega} \beta \wedge \tbeta = \int_{\bdy\Omega} \gamma \wedge \tgamma$.
This integral is the cup product of the de Rham cohomology classes represented by $\gamma$ and $\tgamma$ in $H^k(\bdy\Omega)$ evaluated on the top class of $\bdy\Omega$.

Similarly, $\int_{\bdy \Omega} \beta \wedge \beta = \int_{\bdy\Omega} \gamma \wedge \gamma$ and $\int_{\bdy \Omega} \tbeta \wedge \tbeta = \int_{\bdy\Omega} \tgamma \wedge \tgamma$.  

Viewing $\beta= \BS(\alpha)$ and $\tbeta= f^*\BS(\alpha')$, we may now represent the change in helicity (\ref{helicity-change}) in terms of primitives that represent cohomology classes:
\begin{equation}
\label{helicity-change-gamma}
	\Hel(\alpha') - \Hel(\alpha) = 
	 \begin{cases} 
	    \int_{\bdy\Omega} \gamma \wedge \tgamma & \text{if $k$ is odd},\\ 
	    \tfrac{1}{2} \int_{\bdy\Omega} \tgamma \wedge \tgamma - \gamma \wedge \gamma & \text{if $k$ is even}. 
	 \end{cases} 
\end{equation}

\subsection{Background on the homology of domains in $\R^{2k+1}$}

Before proceeding, we list a couple of ``folk theorems'' about the homology and cohomology of domains in $\R^{2k+1}$.  To aid the non-expert reader, we also provide an example in Figure~\ref{fig:alexanderbasis}.  We furnish proofs of these results in Appendix~\ref{hom}. In all of these theorems, we use de Rham cohomology and so take our coefficients in $\R$. In this case, the Universal Coefficient Theorem gives us a natural duality isomorphism betweeen homology and cohomology. For a homology class $s$, we denote the dual cohomology class by $s^*$. We start with an existence theorem for a special basis for the $k$-th homology of $\bdy\Omega$:

\begin{thm-b1}
Let $\Omega$ be a compact domain with smooth boundary in $\R^{2k+1}$ or $S^{2k+1}$ (with $k > 0$) and $\ocomp$ be the complementary domain $\R^{2k+1} - \Omega$ or $S^{2k+1} - \Omega$. Then if we take coefficients in $\R$, $H_k(\bdy\Omega) = H_k(\Omega) \oplus H_k(\ocomp)$. Further, given any basis $\langle s_1, \dots, s_n \rangle$ for $H_k(\Omega)$ there is a corresponding basis $\langle s_1, \dots, s_n, t_1, \dots, t_n \rangle$ for $H_k(\bdy\Omega)$ which we call the \emph{Alexander basis} corresponding to $\langle s_1, \dots, s_n \rangle$ so that:
\begin{enumerate}
\item The inclusion $\bdy\Omega \hookrightarrow \Omega$ maps $\langle s_1, \dots, s_n \rangle \in H_k(\bdy\Omega)$ to the original basis $\langle s_1, \dots, s_n \rangle$ for $H_k(\Omega)$ and the inclusion $\bdy\Omega \hookrightarrow \ocomp$ maps $\langle t_1, \dots, t_n \rangle$ to a basis for $H_k(\ocomp)$.
\item 
$s_i = \bdy \sigma_i$ for $\sigma_i \in H_{k+1}(\ocomp,\bdy\ocomp)$, where the $\sigma_i$ form a basis for $H_{k+1}(\ocomp,\bdy\ocomp)$. Similarly, $t_i = \bdy \tau_i$ for $\tau_i \in H_{k+1}(\Omega,\bdy\Omega)$, where the $\tau_i$ form a basis for $H_{k+1}(\Omega,\bdy\Omega)$.
\item 
The cup product algebras of $\Omega$, $\ocomp$ and $\bdy\Omega$ obey 
\begin{equation*}
s_i^* \cup \tau_j^* = \delta_{ij} [\Omega]^*, \quad t_i^* \cup \sigma_j^* = (-1)^{k+1} \delta_{ij}[\ocomp]^*
\end{equation*} and
\begin{equation*}
s_i^* \cup s_j^* = 0, \quad t_i^* \cup s_j^* = \delta_{ij} [\bdy\Omega]^*, \quad t_i^* \cup t_j^* = 0.
\end{equation*}
\item 
The linking number $\Lk(s_i,t_j) = \delta_{ij}$. (Thus $\Lk(t_j,s_i) = (-1)^{(k+1)^2} \delta_{ij}$.) 
\end{enumerate}
The Alexander duality isomorphism from $H_k(\Omega)$ to $H_k(\ocomp)$ maps $s_i$ to $t_i$.  
\end{thm-b1}

We will then study the effect of a homeomorphism on the Alexander basis, proving

\begin{thm-b3}
Suppose that $\Omega$ and $\Omega'$ are compact domains with smooth boundary in $\R^{2k+1}$ or $S^{2k+1}$ and that $f\co \Omega \rightarrow \Omega'$ is an orientation-preserving homeomorphism. Then if $\langle s_1, \dots, s_n \rangle$ is a basis for $H_k(\Omega)$ and $\langle s_1', \dots, s_n' \rangle$ is a corresponding basis for $H_k(\Omega')$ so that $f_*(s_i) = s_i'$, then we may build Alexander bases $\langle s_1,\dots,s_n,t_1,\dots,t_n \rangle$ for $H_k(\bdy\Omega)$ and $\langle s_1',\dots,s_n',t_1',\dots,t_n' \rangle$ for $H_k(\bdy\Omega')$. For these bases, we have
$f_*(\tau_i) = \tau_i'$ and $\bdy f_*(t_i) = t_i'$ so that the map $\bdy f_*\co H_k(\bdy\Omega) \rightarrow H_k(\bdy\Omega')$ can be written as the $2n \times 2n$ matrix
\renewcommand{\arraystretch}{1.5}
\begin{equation}
\bdy f_* = \left[ 
	\begin{array}{c|c}
	I & 0 \\
	\hline 
	(c_{ij}) & I
	\end{array}
	\right], 
\tag{\ref{mmatrix}}
\end{equation}
where each block represents an $(n \times n)$ matrix. If $k$ is odd, the block matrix $\left(c_{ij}\right)$ is symmetric, while if $k$ is even, the block matrix $\left(c_{ij}\right)$ is skew-symmetric.
\end{thm-b3}

Since these theorems are somewhat complicated, we give an example in Figure~\ref{fig:alexanderbasis}.

\begin{figure}[ht]
\begin{overpic}[width=3.8in]{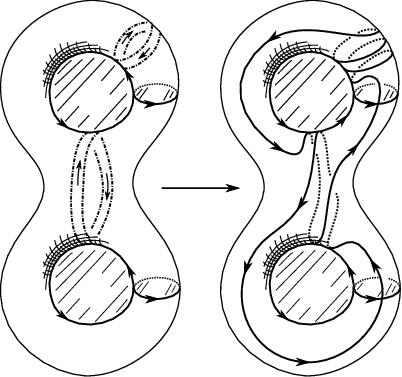}
\put(22,21.5){$\sigma_1$}
\put(37.5,21.5){$\tau_1$}
\put(28.5,12){$s_1$}
\put(38,15.5){$t_1$}
\put(22,69){$\sigma_2$}
\put(28.5,60){$s_2$}
\put(37,69.7){$\tau_2$}
\put(37,64){$t_2$}
\put(1,1){$\Omega$}
\put(55,1){$\Omega'$}
\put(49,50){$f$}
\put(14,46){$\alpha$}
\put(27.5,86){$\beta$}
\put(77,21.5){$\sigma_1'$}
\put(90.7,21.5){$\tau_1'$}
\put(84,12){$s_1'$}
\put(89,15.5){$t_1'$}
\put(77,69){$\sigma_2'$}
\put(90.7,70){$\tau_2'$}
\put(83,59){$s_2'$}
\put(89,64){$t_2'$}
\put(72.5,88.5){$f(s_2)$}
\put(73,6){$f(s_1)$}

\end{overpic}
\caption{Theorem~\ref{alexander basis existence} claims that there exists a special basis $s_1,s_2,t_1,t_2$ for $H_1(\bdy\Omega)$. The left figure shows representatives for these classes. The $s_i$ bound spanning surfaces $\sigma_i \in H_2(\ocomp,\bdy\ocomp)$ in the complement $\ocomp$ of $\Omega$ while the $t_i$ bound surfaces $\tau_i \in H_2(\Omega,\bdy\Omega)$. The map $f$ is the homeomorphism from $\Omega$ to $\Omega'$ given by one Dehn twist around a disk spanning the band $\alpha$ and three Dehn twists around a disk spanning $\beta$. We can see the Alexander basis $s_1',s_2',t_1',t_2'$ on $\Omega$ and the images of $s_1$ and $s_2$. (The images of $t_1$ and $t_2$ are $t_1'$ and $t_2'$.) Further, if $\bdy f_* \co H_1(\bdy\Omega) \rightarrow H_1(\bdy\Omega')$, then we can see $\bdy f_*(s_1) = s_1' - t_1' + t_2'$, $\bdy f_*(s_2) = s_2' + t_1' - 4 t_2'$. This supports the claim of Theorem~\ref{alexander basis transformations} that if we write $\bdy f_*$ as a matrix, it has the special form of~\eqref{mmatrix}, and in particular that the $t_2'$ coefficient $c_{21}$ of $\bdy f_*(s_1)$ is equal to the $t_1'$ coefficient $c_{12}$ of $\bdy f_*(s_2)$.} 
\label{fig:alexanderbasis}
\end{figure}

\subsection{Fluxless case.} \label{sec:fluxless}

\begin{definition}
A closed Dirichlet $(k+1)$-form $\alpha$ on a domain $\Omega$ in $\R^{2k+1}$ is called \emph{fluxless} if the integral $\int_S \alpha = 0$ over every oriented $(k+1)$-cycle $S \subset \Omega$ with $\bdy S \subset \bdy \Omega$.
\end{definition}
We note that since the $(k+1)$-form $\alpha$ represents a de Rham cohomology class in the relative $k$-homology of $\Omega$, the integral $\int_S \alpha$ depends only on the homology class represented by $S$ in $H_{k+1}(\Omega,\bdy\Omega)$. Since $H_{k+1}(\Omega,\bdy\Omega) = H_{k}(\Omega)$ by 
Poincar\'{e} duality, if $\Omega$ has no $k$-homology then every $(k+1)$-form $\alpha$ is fluxless.

Let $\alpha$ be fluxless.  We will utilize some facts about the cohomology and homology of the boundary of a domain in $\R^{2k+1}$. 
See Appendix~\ref{hom} for details.  First,
\begin{equation}
\label{pdecomp}
	H^k(\bdy\Omega) = H^k(\Omega) \oplus H^k(\R^{2k+1} - \Omega).
\end{equation}
We claim that $\gamma$ and $\tgamma$ represent classes entirely in $H^k(\Omega)$. Suppose we have a $k$-cycle  $c$ in $\bdy\Omega$ which represents a class in $H_k(\R^{2k+1} - \Omega)$.  Such a cycle bounds in $\Omega$. Since $\gamma$ and $\beta$ differ by $d\phi$, they have the same integral over $c$. Further, by Stokes' Theorem, the integral of $\beta$ over $c$ is equal to the integral of $\alpha$ on the $(k+1)$-cycle bounded by $c$ in $\Omega$. Since $\alpha$ is fluxless, this integral is zero. Thus $\int_c \gamma = 0$ for every $k$-cycle in $\bdy\Omega$ which represents in $\R^{2k+1}-\Omega$, and (in terms of \eqref{pdecomp}), $\gamma \in H^k(\Omega)$. The same argument shows that $\tgamma  \in H^k(\Omega)$. 

However, in the cup product algebra of $H^k(\bdy\Omega)$, the only pairs of $k$-forms with nontrivial cup products have one member in $H^k(\Omega)$ and one in $H^k(\R^{2k+1}-\Omega)$.  Hence, $\int_{\partial\Omega} \gamma \wedge \tgamma= 0$; likewise, $\int_{\bdy \Omega} \gamma \wedge \gamma = \int_{\bdy\Omega} \tgamma \wedge \tgamma = 0$.  Thus, both cases of \eqref{helicity-change-gamma} are zero, so we have proven
 
\begin{proposition} 
\label{fluxless-invariance}
If $f \co \Omega \rightarrow \Omega'$ is a diffeomorphism between compact domains in $\R^{2k+1}$ with smooth boundary, then for any fluxless $(k+1)$-form $\alpha$ on $\Omega$, its helicity is invariant under $f$, i.e., 
\begin{equation*}
\Hel(\alpha) = \Hel\left( \fia \right) .
\end{equation*}
\end{proposition}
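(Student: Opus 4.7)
The plan is to read off the result directly from formula~(\ref{helicity-change-gamma}), which already expresses $\Hel(\alpha') - \Hel(\alpha)$ as a boundary integral of wedge products of the harmonic representatives $\gamma$ and $\tgamma$ of $\BS(\alpha)\vert_{\bdy\Omega}$ and $f^*\BS(\fia)\vert_{\bdy\Omega}$ respectively. The entire task is therefore to show that these two classes pair to zero under the cup product on $H^k(\bdy\Omega)$. To get leverage, I would invoke Theorem~\ref{alexander basis existence} for the splitting $H^k(\bdy\Omega) \isom H^k(\Omega) \oplus H^k(\ocomp)$ together with the Alexander basis $\{s_i^*, t_j^*\}$. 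The cup product relations in that theorem state $s_i^* \cup s_j^* = 0$ and $t_i^* \cup t_j^* = 0$, so any two classes that both lie in a single summand have vanishing cup product.

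The key step is to show that $[\gamma]$ and $[\tgamma]$ both lie in the $H^k(\Omega)$ summand. Let $c$ be a $k$-cycle on $\bdy\Omega$ whose homology class lies in the subspace $H_k(\ocomp) \hookrightarrow H_k(\bdy\Omega)$; by Theorem~\ref{alexander basis existence} such a $c$ bounds a $(k+1)$-chain $S \subset \Omega$. Since $\gamma$ differs from the primitive $\beta = \BS(\alpha)$ by an exact form on $\bdy\Omega$, Stokes' theorem gives
\begin{equation*}
\int_c \gamma = \int_c \beta = \int_S d\beta = \int_S \alpha = 0,
\end{equation*}
where the last equality uses the fluxless hypothesis. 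Hence $[\gamma]$ annihilates the $t_j^*$-dual summand and therefore lies in $H^k(\Omega)$. The same argument applies verbatim to $\tgamma$, since $\tbeta = f^*\BS(\fia)$ is also a primitive of $\alpha$ on $\Omega$ (exterior differentiation commutes with pullback, as noted in the derivation of (\ref{helicity-change})).

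Combining these two observations, the classes $[\gamma] \cup [\tgamma]$, $[\gamma] \cup [\gamma]$, and $[\tgamma] \cup [\tgamma]$ all vanish in $H^{2k}(\bdy\Omega)$, so both the odd-$k$ and even-$k$ cases of~(\ref{helicity-change-gamma}) give $\Hel(\fia) = \Hel(\alpha)$. The only real subtlety is the fluxless verification for $\tbeta$: one must confirm that although $f^*\BS(\fia)$ is not in general equal to $\BS(\alpha)$, it is still a primitive of $\alpha$, so that the Stokes calculation above applies to it as well. With that observation in hand, the proof is essentially a direct assembly of the formula~(\ref{helicity-change-gamma}) and the cup product structure encoded in the Alexander basis.
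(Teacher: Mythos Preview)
Your proof is correct and follows essentially the same route as the paper's: both arguments apply~(\ref{helicity-change-gamma}), use Stokes' theorem together with the fluxless hypothesis to show that $[\gamma]$ and $[\tgamma]$ lie in the $H^k(\Omega)$ summand of $H^k(\bdy\Omega)$, and then invoke the vanishing of $s_i^* \cup s_j^*$ from the Alexander basis to kill the boundary integrals. One small wording issue: when you say ``$[\gamma]$ annihilates the $t_j^*$-dual summand,'' you mean that $[\gamma]$ evaluates to zero on the homology classes $t_j$ (equivalently, has no $t_j^*$ component), which is what places it in the span of the $s_i^*$.
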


We note that for fluxless forms, it is not necessary to use the Biot-Savart operator in order to define helicity; replacing it with any primitive of $\alpha$ will produce an integral equivalent to helicity (see Definition~\ref{arnoldhelicity}).

But what about closed Dirichlet $(k+1)$-forms $\alpha$ which are not fluxless? To understand the effect of a diffeomorphism on their helicity, we will have to compute the right hand side of~\eqref{helicity-change} directly.  We do so first for a solid torus before proceeding in general.

\subsection{Solid torus example.}
We start with $\Omega$, a solid torus in $\R^3$.  Let $f\co\Omega \rightarrow \Omega'$ be a diffeomorphism, homotopic to $j$ Dehn twists on a spanning disk of $\Omega$. Then $f$ induces isomorphisms of $H_*(\Omega)$ and  $H_*(\bdy\Omega)$.  

By \eqref{pdecomp}, the boundary homology decomposes as $H_1(\bdy\Omega) = H_1(\Omega) \oplus H_1(\R^{3} - \Omega)$.  We choose an \emph{Alexander basis} (defined in Theorem~\ref{alexander basis existence}) $\langle s, t \rangle$:  $t$ is a meridian on $\bdy\Omega$, i.e., $t$ generates $H_1(\R^3 - \Omega)$; $s$ is a longitude on $\bdy\Omega$, i.e., $s$ generates $H_1(\Omega)$.  Choose $s', t'$ similarly on $\bdy\Omega'$.  Let $\sigma$ be a surface in $\R^3-\Omega$ bounded by $s$; let $\tau$ be a surface in $\Omega$ bounded by $t$; similarly define $\sigma'$ and $\tau'$.  Since $f$ applies $j$ Dehn twists to the solid torus $\Omega$, we have $f_*(s) = s' + jt'$ and $f_*(t) =t'$.
 
We consider a closed Dirichlet 2-form $\alpha$ on $\Omega$.  Following the argument above, we utilize the 1-forms $\beta = \BS(\alpha)$ and $\tbeta = f^*\left(\BS(\alpha')\right)$, both primitives for $\alpha$.  Choose suitable 1-forms as above, $\gamma$ and $\tgamma$, which represent in terms of 1-cohomology classes.  From (\ref{helicity-change-gamma}), the change in helicity under $f$ is  
\begin{eqnarray*}
	\Hel(\alpha') - \Hel(\alpha) & = &  \int_{\bdy \Omega} \gamma \wedge \tgamma.
\end{eqnarray*}

We recognize this integral as a cup product pairing in $H^1(\bdy\Omega)$, since both forms in the integrand can be viewed as 1-cohomology classes.  The cup product pairing is straightforward on the torus. If we write the cohomology classes dual to $s$ and $t$ as $s^*$ and $t^*$, then by Theorem~\ref{alexander basis existence}
\begin{equation*}
s^* \cup s^* = 0, \qquad t^* \cup t^* = 0, \qquad t^* \cup s^* =  [\bdy\Omega]^*,
\end{equation*}
where $[\bdy\Omega]$ is the top class of the boundary in $H_2(\bdy\Omega)$ and $[\bdy\Omega]^*$ its dual in 
$H^2(\bdy\Omega)$.  Now we write $\gamma$ and $\tgamma$ in terms of the cohomology classes they represent,
\begin{equation}
[\gamma] = a s^* + b t^* \qquad [ \tgamma ] = \tilde{a} s^* + \tilde{b} t^*, 
\end{equation}
and find the coefficients by integrating.  For example,
\begin{align*}
\tilde{a} &=   \int_{s} \tgamma = \int_{s} f^*\BS\left(\fia \right) - \int_{s} d\tphi 
	 = \int_{f(s)} \BS\left(\fia \right) - 0 \\
	&=  \int_{s'} \BS(\alpha') + j \int_{t'} \BS(\alpha') = \int_{\sigma'} \alpha' + j \int_{\tau'} \alpha'\\
	&= \int_{\sigma'} \alpha' + j \Flux(\alpha',\tau'). 
\end{align*}
Since $\alpha'$ is identically zero on $\R^3-\Omega$, the first term $\int_{\sigma'} \alpha' = 0$.  We also note that $\Flux(\alpha,\tau) = \Flux(\alpha',\tau')$.  Thus, $\tilde{a} = j \Flux(\alpha)$.  By similar computations, we obtain
\begin{equation*}
[ \gamma ] =    \Flux(\alpha) t^*, \qquad 
[ \tgamma ] =   \Flux(\alpha) t^* + j \Flux(\alpha) s^*. 
\end{equation*}
Thus, we can view $\int_{\bdy \Omega'} \gamma \wedge \tgamma$ as a cup product evaluated by integration on the top class of $\bdy\Omega$:
\begin{equation}
\Flux(\alpha) \, t^* \cup \left( \Flux(\alpha) \, t^* +  j \Flux(\alpha) \, s^* \right) = j \Flux(\alpha)^2 \, [\bdy\Omega]^*. 
\end{equation}

In summary, we have proven the following theorem.  

\begin{theorem}
\label{torus-h}
Let $\Omega$ be a solid torus in $\R^3$.  Let $f\co\Omega \rightarrow \Omega'$ be an orientation-preserving map which takes $\Omega$ diffeomorphically to a subset of $\R^3$ and is homotopic to applying $j$ Dehn twists to $\Omega$.  Given a closed Dirichlet 2-form $\alpha$, the change in the helicity of $\alpha$ under $f$ is
\begin{equation*}
\Hel(\fia) - \Hel(\alpha) = j \cdot \Flux(\alpha)^2,
\end{equation*}
where the flux is measured over a spanning surface in $\Omega$ which generates $H_2(\Omega,\bdy \Omega)$.
\end{theorem}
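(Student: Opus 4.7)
The plan is to follow the general change-of-helicity framework developed in the preceding pages and specialize it to the solid torus. Starting from equation~\eqref{helicity-change}, I would write
\begin{equation*}
\Hel(\alpha') - \Hel(\alpha) = \int_{\Omega} \alpha \wedge \bigl( f^{*}\BS(\alpha') - \BS(\alpha)\bigr),
\end{equation*}
set $\beta = \BS(\alpha)$ and $\tbeta = f^{*}\BS(\alpha')$ (both primitives of $\alpha$ by Proposition~\ref{bsoperator} and naturality of $d$), and apply the Stokes-theorem identity derived immediately above the statement. Since $k=1$ is odd, this reduces the problem to evaluating
\begin{equation*}
\int_{\bdy\Omega} \beta \wedge \tbeta = \int_{\bdy\Omega} \gamma \wedge \tgamma,
\end{equation*}
where $\gamma,\tgamma$ are the harmonic representatives of the pullbacks of $\beta,\tbeta$ to $\bdy\Omega$ under the Hodge decomposition.

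Next, I would fix the Alexander basis $\langle s,t\rangle$ on $\bdy\Omega$ and its counterpart $\langle s',t'\rangle$ on $\bdy\Omega'$, with spanning surfaces $\sigma,\sigma'$ (in the complement) for $s,s'$ and $\tau,\tau'$ (inside) for $t,t'$. The cup-product table from Theorem~\ref{alexander basis existence} (namely $s^{*}\cup s^{*}=0$, $t^{*}\cup t^{*}=0$, $t^{*}\cup s^{*}=[\bdy\Omega]^{*}$) turns the integral into an algebraic pairing, so the task becomes expressing $[\gamma]$ and $[\tgamma]$ in the basis $\{s^{*},t^{*}\}$. I would determine their coefficients by integrating over $s$ and $t$, noting that adding an exact form cannot change such integrals.

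For $\gamma$: Stokes' theorem and $d\BS(\alpha)=\alpha$ give $\int_t\beta=\int_\tau\alpha=\Flux(\alpha,\tau)$, while $\int_s\beta=\int_\sigma\alpha=0$ because $\alpha$ vanishes outside $\Omega$. Thus $[\gamma]=\Flux(\alpha)\,t^{*}$. For $\tgamma$: using $\bdy f_{*}(s)=s'+jt'$ and $\bdy f_{*}(t)=t'$, I get $\int_s\tbeta=\int_{f(s)}\BS(\alpha')=\int_{\sigma'}\alpha' + j\int_{\tau'}\alpha' = j\Flux(\alpha',\tau')$ and $\int_t\tbeta=\Flux(\alpha',\tau')$. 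Since $f$ pushes $\tau$ onto (a cycle homologous to) $\tau'$ and $\alpha'=(f^{-1})^{*}\alpha$, the fluxes agree: $\Flux(\alpha',\tau')=\Flux(\alpha,\tau)$. Hence $[\tgamma]=j\Flux(\alpha)\,s^{*}+\Flux(\alpha)\,t^{*}$.

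Finally, evaluating the cup product,
\begin{equation*}
[\gamma]\cup[\tgamma] = \Flux(\alpha)\,t^{*}\cup\bigl(j\Flux(\alpha)\,s^{*}+\Flux(\alpha)\,t^{*}\bigr) = j\,\Flux(\alpha)^{2}\,[\bdy\Omega]^{*},
\end{equation*}
and integrating over the fundamental class of $\bdy\Omega$ yields $\Hel(\alpha')-\Hel(\alpha)=j\,\Flux(\alpha)^{2}$. The only subtle step is the bookkeeping that ensures $\Flux(\alpha,\tau)=\Flux(\alpha',\tau')$ (equivalently, that $f_{*}(\tau)$ and $\tau'$ represent the same class in $H_{2}(\Omega',\bdy\Omega')$); this is guaranteed by Theorem~\ref{alexander basis transformations} and is the step I would be most careful about. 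The rest is a direct cohomological bookkeeping argument using the cup-product algebra already established.
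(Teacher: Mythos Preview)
Your proposal is correct and follows essentially the same approach as the paper: you reduce the change in helicity to the cup-product pairing $\int_{\bdy\Omega}\gamma\wedge\tgamma$ via~\eqref{helicity-change-gamma}, compute the coefficients of $[\gamma]$ and $[\tgamma]$ in the Alexander basis by integrating $\BS(\alpha)$ and $f^{*}\BS(\alpha')$ over $s$ and $t$ using Stokes' theorem, and read off $j\,\Flux(\alpha)^{2}$ from the cup-product table. The one point you flag as subtle---that $\Flux(\alpha',\tau')=\Flux(\alpha,\tau)$ because $f_{*}(\tau)$ and $\tau'$ are homologous in $H_{2}(\Omega',\bdy\Omega')$---is exactly the step the paper handles via Theorem~\ref{alexander basis transformations}, so your caution there is well placed.
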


This theorem lets us classify the helicity-preserving diffeomorphisms on the solid torus. We know from Proposition~\ref{homotopic-invariance} that a map from the solid torus to itself preserves helicity for all $2$-forms if it is homotopic to the identity through diffeomorphisms. This theorem lets us prove an (almost) converse result:

\begin{corollary}
\label{torus-g}
If $f\co\Omega \rightarrow \Omega$ is a diffeomorphism of the solid torus to itself, then~$f$ preserves helicity for all closed Dirichlet $2$-forms $\alpha$ if and only if~$f$ is homotopic to the identity through homeomorphisms.
\end{corollary}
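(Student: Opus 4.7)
The strategy is to use Theorem~\ref{torus-h} to convert the analytic condition ``$f$ preserves helicity'' into the single integer condition $j = 0$, and then to invoke the classical description of the mapping class group of the solid torus to re-express $j = 0$ topologically.

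First I would record that for $\Omega = D^2 \cross S^1$ the group $\pi_0 \operatorname{Homeo}^+(\Omega)$ is isomorphic to $\Z$, generated by the Dehn twist $\delta$ about a meridian disk. (By Cerf's theorem in dimension three this agrees with $\pi_0 \operatorname{Diff}^+(\Omega)$, so the smoothness hypothesis on $f$ causes no trouble.) Every orientation-preserving self-homeomorphism of $\Omega$ is therefore homotopic through homeomorphisms to $\delta^j$ for a unique $j \in \Z$, and the condition ``$f$ is homotopic to the identity through homeomorphisms'' is equivalent to $j = 0$.

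With this $j$ attached to $f$, Theorem~\ref{torus-h} applied with $\Omega' = \Omega$ asserts
\begin{equation*}
\Hel(\fia) - \Hel(\alpha) = j \cdot \Flux(\alpha)^2
\end{equation*}
for every closed Dirichlet $2$-form $\alpha$ on $\Omega$. The $(\Leftarrow)$ direction is immediate: if $j = 0$ the right-hand side vanishes for every $\alpha$. For the $(\Rightarrow)$ direction, I would exhibit a single closed Dirichlet $2$-form $\alpha_0$ with nonzero flux through a meridian disk $\tau$; the natural choice, writing $\Omega = D^2 \cross S^1$ with coordinates $(x,y,\theta)$, is $\alpha_0 = dx \wedge dy$, the pullback of the area form on $D^2$ under the obvious projection. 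This form is closed, manifestly Dirichlet (its restriction to $\bdy\Omega = \pd D^2 \cross S^1$ vanishes: $dx \wedge dy$ evaluated on the basis $\pd/\pd\theta$ and the unit tangent to $\pd D^2$ is zero), and satisfies $\Flux(\alpha_0,\tau) = \operatorname{Area}(D^2) \neq 0$. Applying the hypothesis to $\alpha_0$ forces $j \cdot \Flux(\alpha_0)^2 = 0$, so $j = 0$ as required.

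The main obstacle, such as it is, is simply the citation for the mapping class group identification; once $\pi_0 \operatorname{Homeo}^+(D^2 \cross S^1) \isom \Z$ is in place the corollary is a clean consequence of Theorem~\ref{torus-h}, since the quadratic form $\alpha \mapsto \Flux(\alpha)^2$ on the one-dimensional flux space of the solid torus is nondegenerate and so detects the single integer invariant of $f$.
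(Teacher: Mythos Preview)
Your argument is essentially the paper's, with one genuine omission: you silently restrict to orientation-preserving $f$. The corollary as stated allows arbitrary diffeomorphisms, and Theorem~\ref{torus-h} itself only applies to orientation-preserving maps, so your reduction to the integer $j$ does not cover the orientation-reversing case. The paper closes this gap by observing (via the mapping class group $\Z \oplus \Z_2$, the $\Z_2$ detecting orientation) that an orientation-reversing $f$ sends $\Hel(\alpha)$ to $-\Hel(\alpha)$ for every $\alpha$, so any $\alpha$ with $\Hel(\alpha)\neq 0$ witnesses failure. You need to add this case; otherwise your $(\Rightarrow)$ direction is incomplete, and your $(\Leftarrow)$ direction tacitly uses that $f$ is orientation-preserving when invoking Theorem~\ref{torus-h}.

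Apart from that, your route matches the paper's: cite the mapping class group of the solid torus, reduce to the twist number $j$, and use Theorem~\ref{torus-h} together with a form of nonzero flux to force $j=0$. Your explicit $\alpha_0 = dx \wedge dy$ is a fine witness for the flux condition (the paper simply says ``any form $\alpha$ with nonzero flux'').
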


\begin{proof}
Wainryb~\cite[Theorem 14]{wain} showed that the mapping class group of a solid torus is isomorphic to   $\Z \oplus \Z_2$, where the $\Z$ counts Dehn twists and the $\Z_2$ detects change of orientation. Thus a map is homotopic to the identity through homeomorphisms if and only if it preserves orientation and has no Dehn twists. By Theorem~\ref{torus-h}, such a map preserves helicity for any form $\alpha$. 

On the other hand, a map which reverses orientation reverses the sign of helicity (for any form $\alpha$ with nonzero helicity) and by Theorem~\ref{torus-h} a map which is homotopic to a nonzero number of Dehn twists changes the helicity of any form $\alpha$ with nonzero flux.
\end{proof}

We now check this theorem with an explicit example. Suppose that $\Omega$ is the solid torus of revolution in $\R^3$ whose core circle has radius 1 and whose tube has radius $R$. We set up (standard) toroidal coordinates $(r, \theta, \phi)$ on the torus where $\theta$ parametrizes the core circle and $(r,\phi)$ are polar coordinates on the cross-sections of the tube. Consider the diffeomorphism $f(r,\theta,\phi) = (r,\theta,\theta+\phi)$ on $\Omega$.  This is a volume-preserving diffeomorphism which applies one Dehn twist to $\Omega$. We will compute the helicity of the 2-form $\alpha = *d\theta$ ($*$ is the Hodge star with respect to the standard form $\dVol_\Omega$) dual to the vector field $\dd{\theta}$ on $\Omega$ before and after the diffeomorphism $f$, as shown in Figure~\ref{fig:straight and twisted}.
\begin{figure}
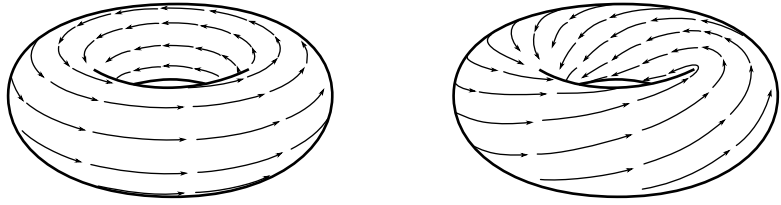

\begin{overpic}{straight_field}
\end{overpic}
\hspace{0.5in}
\begin{overpic}{twisted_field}
\end{overpic}
\caption{The figure shows the effect of a diffeomorphism $f$ which applies a Dehn twist to the solid torus $\Omega$ on a vector field $V$ dual to a 2-form $\alpha$. In toroidal coordinates $(r, \theta, \phi)$, this is the map $(r,\theta,\phi) \mapsto (r,\theta,\theta+\phi)$.  The left-hand field $V = \dd{\theta}$.  The field on the right is the pushforward $\dd{\theta} + \dd{\phi}$ of the field $V$ under the map $f$. If the radius of the core circle of the torus is 1 and the radius of the tube is $R$, we compute that the helicity of the left hand field is $0$ and the helicity of the right hand field is $\pi^2 R^4$. This agrees with Theorem~\ref{torus-h}.
}
\label{fig:straight and twisted}
\end{figure}
It is easy to see that $f_* \dd{\theta} = \dd{\theta} + \dd{\phi}$. So we must compute the helicity of these two fields. It is convenient to do this via the ergodic definition of helicity given by Arnol'd~\cite[p.146]{MR1612569}:
\begin{definition}
The \emph{asymptotic linking number} of the pair of trajectories $g^t x_1$ and $g^t x_2$ ($x_1, x_2 \in \Omega)$ of a field $V$ is defined to be the limit
\begin{equation}
\lambda_V(x_1,x_2) := \lim_{t_1, t_2 \rightarrow \infty} \frac{ \lk_V(x_1,x_2;t_1,t_2) }{t_1 t_2}
\end{equation} 
where $\lk_V(x_1,x_2;t_1,t_2)$ is the linking number of the closures of the trajectories extending from $x_1$ and $x_2$ for times $t_1$ and $t_2$. 
\end{definition}
The definition of the asymptotic linking number requires that the trajectories be closed by a ``system of short paths'' joining any given pair of points on $\Omega$ and obeying certain mild technical hypotheses. Luckily, in the cases of interest to us, all of the orbits of our fields are closed with period $2\pi$, so we can ignore these details and let 
\begin{equation}
\lambda_V(x_1,x_2) = \lim_{p, q \in \mathbb{N} \rightarrow \infty} \frac{ \lk_V(x_1,x_2;2\pi p, 2\pi q) }{4 \pi^2 pq}.
\label{eq:asymp}
\end{equation}

Now Arnol'd's ergodic definition of helicity proves that 
\begin{theorem}[Arnol'd~\cite{MR891881,MR1612569}]
The average asymptotic linking number of a divergence-free field tangent to the boundary of a closed domain in $\R^3$ is equal to the helicity of the field. That is, 
\begin{equation}
\Hel(V) = \iint_{\Omega \cross \Omega} \lambda_V(x_1,x_2) \, \dVol_{x_1} \dVol_{x_2}.
\end{equation}
\end{theorem}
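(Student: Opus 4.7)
The plan is to reduce both sides of the identity to the same Gauss integral, using the fact that the flow of a divergence-free field $V$ tangent to $\bdy\Omega$ preserves volume. Starting from the definition of the asymptotic linking number, for each fixed pair $(x_1,x_2)$ the closed curves obtained by truncating the trajectories at times $t_1, t_2$ and joining their endpoints by a uniformly bounded system of short paths have a linking number given, via the classical Gauss integral formula, as
\begin{equation*}
\lk_V(x_1,x_2;t_1,t_2) = \frac{1}{4\pi}\int_0^{t_1}\!\!\int_0^{t_2} V(g^s x_1)\cross V(g^u x_2)\cdot \frac{g^sx_1-g^ux_2}{|g^sx_1-g^ux_2|^3}\,du\,ds + R(x_1,x_2;t_1,t_2),
\end{equation*}
where $R$ collects the contributions involving the short closing paths (paths-vs-paths and paths-vs-trajectories Gauss terms).

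I would then integrate this identity over $(x_1,x_2)\in\Omega\cross\Omega$, divide by $t_1 t_2$, and pass to the limit. The main term admits a clean computation: for each fixed $(s,u)$, Liouville's theorem for the flow $g^t$ allows the substitution $y_1 = g^s x_1$, $y_2 = g^u x_2$, after which the inner double integral becomes independent of $s$ and $u$ and equals exactly $4\pi\,\Hel(V)$ by Theorem~\ref{ftclassical} applied to the integrand of~\eqref{mhelicity}. Hence integrating over $s\in[0,t_1]$, $u\in[0,t_2]$ yields $t_1 t_2\,\Hel(V)$ in the numerator, which cancels the denominator. The remainder $R$ consists of Gauss terms in which at least one of the two curves is a short closing path of length bounded uniformly in $t_1, t_2$; after integration over $(x_1,x_2)$ each such term grows at most linearly in $\max(t_1,t_2)$, and therefore contributes $0$ in the limit.

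The main obstacle is justifying the interchange of limit and outer integral, since the Gauss kernel $|x-y|^{-2}$ is singular along the diagonal and two trajectories starting close together may remain close for long stretches of flow time. I would handle this by excising an $\eps$-neighborhood $U_\eps$ of the diagonal in $\Omega\cross\Omega$: on its complement the integrand is bounded uniformly in $(s,u,t_1,t_2)$, so Fubini together with the change-of-variables above applies directly and delivers $\Hel(V)$ in the limit; near the diagonal, the full six-dimensional Gauss integral converges absolutely (as in the standard argument that \eqref{mhelicity} is well-defined; see~\cite{MR1770976}), so the contribution of $U_\eps$ to both sides can be made arbitrarily small uniformly in $t_1,t_2$. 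A Birkhoff-type ergodic argument guarantees pointwise existence of $\lambda_V$ almost everywhere on $\Omega\cross\Omega$, and letting $\eps\to0$ last completes the identification $\Hel(V) = \iint_{\Omega\cross\Omega}\lambda_V(x_1,x_2)\,\dVol_{x_1}\dVol_{x_2}$.
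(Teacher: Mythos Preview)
The paper does not prove this theorem; it is stated with attribution to Arnol'd and invoked only to compute the helicity of two explicit fields on the solid torus. Your sketch follows the standard line of Arnol'd's original argument in the cited references: express the linking number of the truncated-and-closed trajectories by the Gauss integral, exploit volume preservation of the flow to remove the $(s,u)$-dependence after integrating over $\Omega\times\Omega$, and control the short-path remainders by their uniformly bounded length.

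The main-term computation and the remainder estimate are correct in outline. The genuine analytic work, which you acknowledge but do not carry out, is the interchange of limit and integral. Your change-of-variables trick establishes
\[
\lim_{t_1,t_2\to\infty}\frac{1}{t_1t_2}\iint_{\Omega\times\Omega}\lk_V(x_1,x_2;t_1,t_2)\,\dVol_{x_1}\dVol_{x_2}=\Hel(V),
\]
which is not the same statement as $\iint_{\Omega\times\Omega}\lambda_V\,\dVol_{x_1}\dVol_{x_2}=\Hel(V)$. Arnol'd closes this gap by proving $\lambda_V\in L^1(\Omega\times\Omega)$ via an ergodic theorem (Vogel's, or a multiparameter Birkhoff theorem) applied to the $L^1$ Gauss kernel, which simultaneously yields almost-everywhere existence of the pointwise limit and equality of the two expressions. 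Your diagonal-excision argument does not by itself furnish a dominating function for $\frac{1}{t_1t_2}\lk_V(x_1,x_2;t_1,t_2)$, since trajectories starting off the diagonal can still spend long stretches arbitrarily close together; so the sentence invoking ``a Birkhoff-type ergodic argument'' is precisely where the substantive content of the proof resides, and would need to be fleshed out for the argument to be complete.
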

We can now compute the helicity of our fields. For the field $\dd{\theta}$, the orbits are all circles parallel to the $xy$ plane. These never link, so the helicity of this field is zero. 

For the field $V = \dd{\theta} + \dd{\phi}$, the orbits are all $(1,1)$ curves on a family of nested tori foliating the solid torus $\Omega$. Any pair of such curves has linking number $1$. Now the trajectories for times $2\pi p$ and $2 \pi q$ cover one of these curves $p$ times and the other $q$ times, so the linking number of the trajectories is $pq$. Taking the limit in~\eqref{eq:asymp}, we get $\lambda_V(x_1,x_2) = 1/4 \pi^2$ for all $x_1$, $x_2$ in $\Omega$. We now compute the helicity of the field 
\begin{equation}
\Hel(V) = \iint_{\Omega \cross \Omega} \lambda_V(x_1,x_2) \dVol_{x_1} \dVol_{x_2} = \frac{\Vol(\Omega)^2}{4\pi^2} = \frac{((2\pi)(\pi R^2))^2}{4\pi^2} = \pi^2 R^4.
\label{eq:direct}
\end{equation}
Here the volume of the tube is the product of the length of the core curve and the cross-sectional area by the Tube Formula.

Now we compare the prediction of Theorem~\ref{torus-h} that $\Hel(V) = \Flux(V)^2$. We must compute the flux of $V$ across a cross-sectional disk of $\Omega$. Since $\dd{\phi}$ is tangent to such a disk, the flux is the same as the flux of $\dd{\theta}$.  A computation shows that this flux is $\pi R^2$, but this is not hard to see: the flux is the rate at which the disk sweeps out volume when rotated around the axis. Since this rate is constant and the disk sweeps out the entire volume $2\pi^2 R^2$ of the tube after rotation through $2\pi$, the rate must be $\pi R^2$, as claimed. We conclude that $\Hel(V) = (\pi R^2)^2$, which agrees with our computation in~\eqref{eq:direct}. We note that a similar example would be easy to work out for a different number of Dehn twists, as a pair of closed orbits of the field after $j$ Dehn twists would have linking number $j$.

\subsection{General formula for change of helicity}

With Theorem~\ref{torus-h} in hand, we now show that a strikingly similar formula holds for general domains $\Omega^{2k+1}$.  We begin with the same setup and compute the change of helicity via (\ref{helicity-change-gamma}). Throughout this section, we will make use of the Einstein summation convention.

Again, we choose an \emph{Alexander basis} (Theorem~\ref{alexander basis existence}) $\langle s_1, \dots, s_n, t_1, \dots, t_n \rangle$ for  $H_k(\bdy\Omega)$. With respect to this basis, we recall that Theorem~\ref{alexander basis transformations} tells us that there is a corresponding Alexander basis $\langle s_1',\dots,s_n', t_1', \dots, t_n' \rangle$ for $H_k(\bdy\Omega')$ so that the map $\bdy f_*\co H_k(\bdy\Omega) \rightarrow H_k(\bdy\Omega)$ looks like a $2n \times 2n$ block matrix
\begin{equation*}
\bdy f_* = \left[ 
	\begin{array}{c|c}
	I & 0 \\
	\hline 
	(c_{ij}) & I
	\end{array}
	\right].
\end{equation*}
We now write the classes $[\gamma]$ and $[\tgamma]$ in terms of this basis.

\begin{proposition} In terms of the Alexander basis, the cohomology classes represented by the forms $\gamma$ and $\tgamma$ are
\label{coeffs}
\begin{equation*}
[\gamma ]  = \Flux(\alpha,\tau_i) t_i^*, \quad
[\tgamma ] = \Flux(\alpha,\tau_i) t_i^* + c_{ij} \Flux(\alpha,\tau_i) s_j^* 
\end{equation*}
where the $c_{ij}$ come from the expression of $\bdy f_*$ as a matrix above.
\end{proposition}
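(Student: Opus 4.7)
The plan is to determine $[\gamma]$ and $[\tgamma]$ in the Alexander basis by computing their periods over the cycles $s_j$ and $t_j$. Since $\{s_i,t_j\}$ is by construction the homology basis dual to $\{s_i^*,t_j^*\}$, any class $[\eta]=a_i s_i^* + b_j t_j^*$ in $H^k(\bdy\Omega)$ is characterized by $a_j=\int_{s_j}\eta$ and $b_j=\int_{t_j}\eta$. Because $\gamma=\beta|_{\bdy\Omega}-d\phi$ with $\beta=\BS(\alpha)$, and similarly for $\tgamma$, Stokes' theorem lets us replace $\gamma$ and $\tgamma$ by $\BS(\alpha)|_{\bdy\Omega}$ and $f^*\BS(\alpha')|_{\bdy\Omega}$ when integrating over cycles.

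For $[\gamma]$, I would carry out two period computations. First, using Theorem~\ref{alexander basis existence} we have $t_j=\bdy\tau_j$ with $\tau_j\subset\Omega$, so Stokes together with Proposition~\ref{bsoperator} gives
\begin{equation*}
\int_{t_j}\BS(\alpha)=\int_{\tau_j} d\BS(\alpha)=\int_{\tau_j}\alpha=\Flux(\alpha,\tau_j).
\end{equation*}
Second, I want to show $\int_{s_j}\BS(\alpha)=0$. I would observe that the integral defining $\BS(\alpha)(x)$ has no singular locus when $x\notin\Omega$, so $\BS(\alpha)$ extends smoothly to $\ocomp$, and since $\alpha$ vanishes there we have $d\BS(\alpha)=0$ on $\ocomp$. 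Because $s_j=\bdy\sigma_j$ with $\sigma_j\subset\ocomp$, Stokes yields $\int_{s_j}\BS(\alpha)=\int_{\sigma_j}0=0$. These two calculations establish $[\gamma]=\Flux(\alpha,\tau_i)t_i^*$.

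For $[\tgamma]$, I would use the naturality identity $\int_c f^*\BS(\alpha')=\int_{f_*(c)}\BS(\alpha')$ together with the matrix form of Theorem~\ref{alexander basis transformations}, which yields $\bdy f_*(s_j)=s_j'+c_{ij}t_i'$ and $\bdy f_*(t_j)=t_j'$. Applying the two computations above in the primed setting gives $\int_{s_j'}\BS(\alpha')=0$ and $\int_{t_i'}\BS(\alpha')=\Flux(\alpha',\tau_i')$, so by linearity $\int_{s_j}\tgamma=c_{ij}\Flux(\alpha',\tau_i')$ and $\int_{t_j}\tgamma=\Flux(\alpha',\tau_j')$. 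The final ingredient is that $\Flux(\alpha',\tau_j')=\Flux(\alpha,\tau_j)$: since Theorem~\ref{alexander basis transformations} ensures $\tau_j'=f_*(\tau_j)$ and $\alpha'=\fia$, change of variables gives $\int_{f_*(\tau_j)}\fia=\int_{\tau_j}\alpha$. Assembling these coefficients produces the stated formula for $[\tgamma]$.

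The main subtlety I anticipate is the vanishing $\int_{s_j}\BS(\alpha)=0$, because the excerpt has defined $\BS(\alpha)$ only as a form on $\Omega$. The cleanest way to handle this is to note that the Gauss kernel $g^*\dVol$ has no singularity for $x$ outside $\Omega$, so the defining integral furnishes a smooth extension of $\BS(\alpha)$ to $\ocomp$ which is automatically closed there. Alternatively, one can argue cohomologically that $[\BS(\alpha)|_{\bdy\Omega}]$ lies in the subspace $H^k(\ocomp)\subset H^k(\bdy\Omega)$, which by Theorem~\ref{alexander basis existence} is precisely the span of $\{t_i^*\}$, so its $s_j^*$-components vanish without further computation.
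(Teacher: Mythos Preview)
Your proposal is correct and follows essentially the same route as the paper's proof: both compute the coefficients by integrating $\BS(\alpha)$ (respectively $f^*\BS(\alpha')$) over the basis cycles $s_j,t_j$, apply Stokes over the bounding chains $\sigma_j,\tau_j$ (respectively their primed analogues), use $d\BS(\alpha)=\alpha$ and the vanishing of $\alpha$ outside $\Omega$, and then invoke the block form of $\bdy f_*$ together with $f_*(\tau_j)=\tau_j'$ to convert $\Flux(\alpha',\tau_j')$ back to $\Flux(\alpha,\tau_j)$. Your explicit remark that $\BS(\alpha)$ extends smoothly and is closed on $\ocomp$ is a point the paper leaves implicit when it writes $\int_{\sigma_i}\alpha$ with $\alpha\equiv 0$ there; your alternative cohomological justification via the splitting of $H^k(\bdy\Omega)$ is the same idea used in Lemma~\ref{self adjoint}.
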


We obtain the general change in helicity formula.
\begin{theorem}
\label{general-change}
Let $\Omega^{2k+1}$ be a subdomain of $\R^{2k+1}$, and let $f\co\Omega \rightarrow \Omega'$ be an orientation-preserving diffeomorphism.  Consider a closed Dirichlet $(k+1)$-form $\alpha$ on $\Omega$.  The change in the helicity of $\alpha$ under $f$ is
\begin{equation}
\label{eq:general-change}
\Hel\left(\alpha' \right) - \Hel(\alpha) = \sum_{i,j} c_{ij} \cdot \Flux(\alpha, \tau_i) \Flux(\alpha, \tau_j)
\end{equation}
where the constants $c_{ij}$ arise from the homology isomorphism induced by $f$ on $H_k(\bdy\Omega)$ as above.  The $(2m+2)$-form $\alpha'$ is the `push-forward' of $\alpha$ under $f$; more precisely, $\alpha'=\fia$ is the pullback of $\alpha$ under the inverse diffeomorphism.
\end{theorem}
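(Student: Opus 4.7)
The plan is to start from equation~(\ref{helicity-change-gamma}), which already expresses $\Hel(\alpha') - \Hel(\alpha)$ purely in terms of integrals over $\bdy\Omega$ of wedge products of the representative closed forms $\gamma$ and $\tgamma$. Since these forms are closed on the $2k$-manifold $\bdy\Omega$, each such integral equals the cup product of the corresponding de Rham cohomology classes in $H^k(\bdy\Omega)$ evaluated on the top class $[\bdy\Omega]$. So the entire problem reduces to a cup product calculation in a basis we already understand.

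Next I would invoke Proposition~\ref{coeffs} to expand
\[
[\gamma] = \Flux(\alpha,\tau_i)\, t_i^*, \qquad
[\tgamma] = \Flux(\alpha,\tau_i)\, t_i^* + c_{ij}\,\Flux(\alpha,\tau_i)\, s_j^*,
\]
and apply the cup product table from Theorem~\ref{alexander basis existence}: $s_i^* \cup s_j^* = 0$, $t_i^* \cup t_j^* = 0$, and $t_i^* \cup s_j^* = \delta_{ij}[\bdy\Omega]^*$. Together with graded commutativity on $k$-forms, which gives $s_j^* \cup t_i^* = (-1)^{k^2}\,t_i^* \cup s_j^* = (-1)^k\,\delta_{ij}[\bdy\Omega]^*$, the bilinear expansion collapses almost immediately. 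For $k$ odd, the only surviving term in $[\gamma]\cup[\tgamma]$ is the mixed $t_i^* \cup s_j^*$ contribution, which yields $c_{ji}\,\Flux(\alpha,\tau_i)\Flux(\alpha,\tau_j)\,[\bdy\Omega]^*$. Using the symmetry of $(c_{ij})$ guaranteed by Theorem~\ref{alexander basis transformations}, we can relabel indices to write this as $\sum_{i,j} c_{ij}\,\Flux(\alpha,\tau_i)\Flux(\alpha,\tau_j)$, matching equation~(\ref{eq:general-change}).

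For $k$ even I would check consistency separately. Since $[\gamma]$ has only $t^*$-components, $[\gamma]\cup[\gamma]=0$ trivially by $t_i^* \cup t_j^* = 0$. The expansion of $[\tgamma]\cup[\tgamma]$ yields $(c_{ij}+c_{ji})\,\Flux(\alpha,\tau_i)\Flux(\alpha,\tau_j)[\bdy\Omega]^*$, which vanishes because $(c_{ij})$ is skew-symmetric in this case. So both sides of~(\ref{eq:general-change}) are zero, consistent with Proposition~\ref{even-k}, and the stated formula holds vacuously.

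The main obstacle, granted Proposition~\ref{coeffs}, is really bookkeeping: matching the sign $(-1)^k$ arising from graded commutativity of the cup product on $\bdy\Omega$ against the parity of the symmetry of $(c_{ij})$ from Theorem~\ref{alexander basis transformations}, so that the calculation for $k$ odd produces a \emph{manifestly symmetric} double sum and the calculation for $k$ even produces zero twice. The deeper content of the theorem has been pushed into Proposition~\ref{coeffs}, whose proof (by evaluating periods of $\BS(\alpha)$ and $f^*\BS(\alpha')$ over the Alexander cycles, just as in the solid torus computation preceding Theorem~\ref{torus-h}) is where the specific identification of coefficients with fluxes through the $\tau_i$ actually occurs.
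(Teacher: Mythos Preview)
Your proposal is correct and follows essentially the same route as the paper: both start from equation~(\ref{helicity-change-gamma}), invoke Proposition~\ref{coeffs} to write $[\gamma]$ and $[\tgamma]$ in the Alexander basis, and then read off the answer from the cup product table of Theorem~\ref{alexander basis existence}, handling the $k$ odd and $k$ even cases separately. The only cosmetic difference is that the paper actually computes the right-hand side of~(\ref{eq:general-change}) to be $\sum_{i,j} c_{ij}\Flux(\alpha,\tau_i)\Flux(\alpha,\tau_j)$ in the even case as well (rather than declaring both sides zero), and then observes this sum vanishes by skew-symmetry of $(c_{ij})$; your treatment of that case is equivalent.
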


\begin{corollary}
\label{vf-change}
Let $\Omega^3$ be a subdomain of $\R^3$, $f\co\Omega \rightarrow \Omega'$ be a volume-preserving diffeomorphism, and $V$ be a smooth vector field on $\Omega$.  The change of helicity of $V$ under $f$ is calculated as above, 
\begin{equation}
\label{eq:vf-change}
\Hel \left(f_* V \right) - \Hel(V) = \sum_{i,j} c_{ij} \cdot \Flux(V, \tau_i) \Flux(V, \tau_j).
\end{equation}
\end{corollary}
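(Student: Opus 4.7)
The plan is to deduce this corollary from Theorem~\ref{general-change} by passing to the 2-form dual of $V$ and verifying that the volume-preserving hypothesis precisely ensures the dual form of $f_*V$ equals the pullback $(f^{-1})^*\alpha$ that appears in the theorem. I will first set up the correspondence given in the Introduction: define the 2-form $\alpha$ on $\Omega$ by $\alpha(W_1,W_2) = \dVol_\Omega(V,W_1,W_2)$, i.e., $\alpha = \iota_V \dVol_\Omega$. Under this correspondence, $V$ being divergence-free and tangent to $\bdy\Omega$ makes $\alpha$ a closed Dirichlet 2-form, and by definition $\Hel(V) = \Hel(\alpha)$. Theorem~\ref{general-change} applied to $\alpha$ (with $k=1$) then yields
\begin{equation*}
\Hel(\fia) - \Hel(\alpha) = \sum_{i,j} c_{ij}\, \Flux(\alpha,\tau_i)\,\Flux(\alpha,\tau_j),
\end{equation*}
with the matrix $(c_{ij})$ read off from $\bdy f_*$ in a corresponding Alexander basis.

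The crux of the argument is to identify $\Hel(\fia)$ with $\Hel(f_*V)$. By the naturality of the interior product, $(f^{-1})^*(\iota_V \dVol_\Omega) = \iota_{f_*V}\bigl((f^{-1})^*\dVol_\Omega\bigr)$. On the other hand, the 2-form on $\Omega'$ dual to $f_*V$ is $\iota_{f_*V}\dVol_{\Omega'}$. These two 2-forms coincide precisely when $(f^{-1})^*\dVol_\Omega = \dVol_{\Omega'}$, which is exactly the volume-preserving condition on $f$. Hence for volume-preserving $f$, $\fia$ is the 2-form dual to $f_*V$, and so $\Hel(f_*V) = \Hel(\fia)$. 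This is the one place the volume-preserving hypothesis is used, and it is the only genuine obstacle — everything else is a direct translation.

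It then remains to match the flux terms. For each relative $2$-cycle $\tau_i \in H_2(\Omega,\bdy\Omega)$ with $\bdy\tau_i = t_i$, the flux of $V$ across $\tau_i$ is $\int_{\tau_i} V\cdot n\, dA$, which equals $\int_{\tau_i} \iota_V\dVol_\Omega = \int_{\tau_i}\alpha = \Flux(\alpha,\tau_i)$; this is immediate from the definition of the dual 2-form and requires no hypothesis on $f$. Substituting $\Hel(f_*V) - \Hel(V)$ for $\Hel(\fia) - \Hel(\alpha)$ and $\Flux(V,\tau_i)$ for $\Flux(\alpha,\tau_i)$ in the conclusion of Theorem~\ref{general-change} yields \eqref{eq:vf-change}, completing the proof.
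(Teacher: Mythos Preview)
Your proof is correct and follows the same approach the paper intends. The paper does not give a separate proof of this corollary; it is stated immediately after Theorem~\ref{general-change} as a direct consequence via the form/vector-field duality, with the role of the volume-preserving hypothesis already explained in Section~3.3. You have simply made explicit the one point the paper leaves to the reader: that $(f^{-1})^*(\iota_V\dVol_\Omega)=\iota_{f_*V}\dVol_{\Omega'}$ exactly when $f$ is volume-preserving, so that $\Hel\bigl((f^{-1})^*\alpha\bigr)=\Hel(f_*V)$.
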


This corollary is also true in general when considering the wedge-product of $k$-vectors, dual to a $(k+1)$-form, on $\Omega^{2k+1}\subset \R^{2k+1}$ under volume-preserving diffeomorphisms.  

\begin{proof}[Proof of Proposition~\ref{coeffs}]
The coefficients for $[\gamma]$ and $[\tgamma]$ can be directly calculated.  Write $[\gamma ] = a_i  s_i + b_i t_i$ and $[\tgamma ]  = \tilde{a}_i  s_i + \tilde{b}_i t_i$.  Then,
\begin{equation*}
a_i = \int_{s_i} \gamma = \int_{s_i = \bdy\sigma_i} \BS(\alpha )  - \int_{s_i} d\phi \\
= \int_{\sigma_i} d\BS(\alpha) - 0 = \int_{\sigma_i} \alpha
\end{equation*}
by Stokes' Theorem.  But $\alpha \equiv 0$ outside of $\Omega$, where $\sigma_i$ is located.  Thus $a_i = 0$. We can similarly calculate $b_i$
\begin{equation*}
b_i  =  \int_{\tau_i} d\BS(\alpha) - 0 = \int_{\tau_i} \alpha = \Flux(\alpha,\tau_i). 
\end{equation*}
Calculating $\tilde{a_i}$ and $\tilde{b_i}$ is more involved.  
\begin{equation*}
\tilde{a_j} = \int_{s_j} \tgamma = \int_{s_j} f^*(\BS(\fia ))  - \int_{s_j} d\tphi 
 =  \int_{f(s_j)} \BS(\alpha' ) - 0.
\end{equation*}
By (\ref{mmatrix}), the image $f(s_j)$ is homologous to $s_j' + c_{ij} t_i'$. Since $\BS(\fia)$ is a closed form on $\bdy\Omega$, this integral is equal to  
\begin{align*}
\tilde{a_j} &= \int_{s_j'} \BS(\alpha') +  c_{ij} \int_{t_i'} \BS(\alpha' ) \\
            &= \int_{\sigma_j'} d\BS(\alpha' ) +  c_{ij} \int_{\tau_i'} d\BS(\alpha' ) \\
            &= \int_{\sigma_j'} \alpha'  +  c_{ij} \int_{\tau_i'} \alpha'. 
\end{align*}
Now $\alpha'$ vanishes outside of $\Omega'$, which is where $\sigma_j'$ is located, so the first term $\int_{\sigma_j'} \alpha'$ must be zero.  We compute the second integral on the original $\Omega$, 
\begin{equation*}
\int_{\tau_i'} \alpha' = \int_{f^{-1}\left(\tau_i' \right)} \alpha 
\end{equation*}
We know from Theorem~\ref{alexander basis transformations} that $f^{-1}(\tau_i')$ is homologous to $\tau_i$. 
So we conclude
\begin{equation*}
\tilde{a_j} = c_{ij} \int_{f^{-1}\left( \tau_i' \right)} \alpha = c_{ij} \int_{\tau_i} \alpha = c_{ij} \Flux(\alpha,\tau_i).
\end{equation*}



We compute $\tilde{b_i}$ similarly:
\begin{equation*}
\tilde{b_i}  =   \int_{t_i} \tgamma = \int_{t_i} f^*(\BS(\fia )  - \int_{t_j} d\tphi  =   \int_{f(t_i)} \BS(\alpha' ). 
\end{equation*}
Since $f(t_i)$ is homologous to $t_i'$ and $\BS(\alpha')$ is closed on $\bdy\Omega'$, 
\begin{equation*}
\tilde{b_i} =   \int_{t_i'} \BS(\alpha' ) = \int_{\tau_i'} d\BS(\alpha' ) 
            =   \int_{\tau_i'} \alpha' = \int_{\tau_i} \alpha = \Flux(\alpha,\tau_i), 
\end{equation*}
using our previous observation that $f^{-1}(\tau_i')$ is homologous to $\tau_i$. This completes the proof.
\end{proof}

We can now derive the change in helicity formula using these coefficients.  This will depend on the parity of $k$,  so we start with the case where $k$ is odd.  The change in helicity formula (\ref{helicity-change-gamma}) is
\begin{equation*}
([\gamma] \cup [\tgamma])([\bdy\Omega]) = b_j \tilde{a_j} + a_j \tilde{b_j} = c_{ij} \Flux(\alpha,\tau_i)\Flux(\alpha,\tau_j) + 0.
\end{equation*}
For $k$ even, the change in helicity formula (\ref{helicity-change-gamma}) is $\nicefrac{1}{2} \left( [\tgamma] \cup [\tgamma] - [\gamma] \cup [\gamma] \right) ([\bdy\Omega])$.  
\begin{align*}
([\tgamma] \cup [\tgamma])([\bdy\Omega]) &= 2 \tilde{a_j} \tilde{b_j} = 2 c_{ij} \Flux(\alpha,\tau_i) \Flux(\alpha,\tau_j) \\ 
(\left[\gamma \right] \cup [\gamma])([\bdy\Omega]) &= a_i b_i = 0.
\end{align*}

We have calculated the change of helicity to be 
\begin{equation}
\label{boxformula}
\boxed{ \Hel\left(\alpha' \right) - \Hel(\alpha) = \sum_{i,j} c_{ij} \Flux(\alpha,\tau_i)\Flux(\alpha,\tau_j).  }
\end{equation}
which proves Theorem~\ref{general-change}.  By Theorem~\ref{alexander basis transformations}, the matrix $(c_{ij})$ is skew-symmetric for~$k$ even, so the double sum on the right-hand-side of (\ref{boxformula}) vanishes in this case. Of course this is what we expect, since we know that in this case $\Hel(\alpha) = \Hel(\alpha') = 0$ by Proposition~\ref{even-k}.

\subsection{Classifying the helicity-preserving diffeomorphisms}

We can now classify completely the helicity-preserving maps $f\co\Omega \rightarrow \Omega$.  For even $k$,  $\Omega$ is $(4m+1)$-dimensional, helicity is the trivial invariant, and all maps are helicity-preserving.  In the case where $k$ is odd, $\Omega$ is $(4m+3)$-dimensional, and the helicity-preserving maps are the orientation-preserving maps with $c_{ij} \Flux(\alpha,\tau_i)\Flux(\alpha,\tau_j) = 0$ for all $\alpha$.  These maps form a subgroup $\HP$ of the diffeomorphism group of $\Omega$.  In Theorem~\ref{alexander basis transformations} the $c_{ij}$ were determined by the homotopy type of $f$.  Since they surely vanish for the identity map, $\HP$ also forms a subgroup of the smooth mapping class group of $\Omega$. In this language, we can give a more standard description of the group $\HP$.

For a surface, the Torelli subgroup of the mapping class group is the group of homeomorphisms which act trivially on homology~\cite{MR718141}. Analogously, for any manifold $M$ we will call the group of diffeomorphisms which act trivially on homology the Torelli subgroup $\Torelli(M)$ of the smooth mapping class group of $M$. Note that maps in $\Torelli(M)$ are orientation-preserving. It is easy to see
\begin{corollary}
\label{classification}
Since a homeomorphism from $\Omega$ to itself naturally maps $\bdy\Omega$ to itself, there is a natural inclusion $\Torelli(\bdy\Omega) \subset \Torelli(\Omega)$. With respect to this inclusion, if $\Omega$ is a domain in $\R^{2k+1}$ with $k$ odd,
\begin{equation*}
\HP(\Omega) \cap \Torelli(\Omega) = \Torelli(\bdy\Omega) . 
\end{equation*}
\end{corollary}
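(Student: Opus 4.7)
The plan is to combine Theorem~\ref{general-change} with the block-matrix description of $\bdy f_*$ from Theorem~\ref{alexander basis transformations}: the first encodes the change in helicity under $f$ as a quadratic form in the flux vector with matrix $(c_{ij})$, and for odd $k$ the second makes $(c_{ij})$ symmetric. The corollary then reduces to the statement that $(c_{ij})$ vanishes identically if and only if the associated quadratic form is identically zero on the image of the flux map.

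For the inclusion $\Torelli(\bdy\Omega) \subseteq \HP(\Omega) \cap \Torelli(\Omega)$, if $f \in \Torelli(\bdy\Omega)$ then $\bdy f_*$ is the identity on $H_k(\bdy\Omega)$, so in the Alexander basis the block matrix from Theorem~\ref{alexander basis transformations} is the identity and every $c_{ij}$ vanishes. Theorem~\ref{general-change} then gives $\Hel(\alpha') = \Hel(\alpha)$ for every closed Dirichlet $(k+1)$-form $\alpha$, placing $f \in \HP(\Omega)$. Membership in $\Torelli(\Omega)$ is the natural inclusion asserted in the statement: a map trivial on $H_k(\bdy\Omega)$ in particular fixes the classes $s_i$, whose images under $\bdy\Omega \hookrightarrow \Omega$ generate $H_k(\Omega)$ by Theorem~\ref{alexander basis existence}.

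For the reverse inclusion, given $f \in \HP(\Omega) \cap \Torelli(\Omega)$, Theorem~\ref{general-change} yields
\[
\sum_{i,j} c_{ij}\,\Flux(\alpha,\tau_i)\,\Flux(\alpha,\tau_j) = 0
\]
for every closed Dirichlet $(k+1)$-form $\alpha$. Provided the flux map $\alpha \longmapsto (\Flux(\alpha,\tau_1),\dots,\Flux(\alpha,\tau_n))$ is surjective onto $\R^n$, the symmetric quadratic form $v \mapsto v^{\top} C v$ with $C = (c_{ij})$ vanishes on all of $\R^n$, and polarization forces $C = 0$. Then $\bdy f_*$ is the identity on $H_k(\bdy\Omega)$ and, combined with $f \in \Torelli(\Omega)$, places $f$ in $\Torelli(\bdy\Omega)$.

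The main obstacle is the realizability claim for fluxes. A closed Dirichlet $(k+1)$-form represents a relative de Rham cohomology class in $H^{k+1}(\Omega,\bdy\Omega)$, and Poincar\'e--Lefschetz duality pairs this space nondegenerately with $H_{k+1}(\Omega,\bdy\Omega)$; since $\tau_1,\dots,\tau_n$ form a basis of the latter by Theorem~\ref{alexander basis existence}, surjectivity of the flux map at the level of cohomology classes is immediate, and passage to an actual closed Dirichlet representative with prescribed fluxes is handled by the Hodge decomposition for closed Dirichlet forms developed in Appendix~\ref{hodge}.
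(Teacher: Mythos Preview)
Your proof is correct and follows essentially the same route as the paper's: both arguments identify the change in helicity with the quadratic form $v^\top (c_{ij}) v$ evaluated on the flux vector, use surjectivity of the flux map together with the symmetry of $(c_{ij})$ for odd $k$ to force $(c_{ij})=0$, and then read off triviality of $\bdy f_*$ on $H_k(\bdy\Omega)$ from the block form in Theorem~\ref{alexander basis transformations}. The paper simply asserts that ``any vector of fluxes can be obtained by choosing an appropriate $\alpha$'', whereas you supply the justification via Lefschetz duality and the Hodge decomposition of Appendix~\ref{hodge}; this is a welcome elaboration rather than a different method.
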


\begin{proof}
If $f \in \Torelli(\Omega)$, then $f$ is orientation-preserving and Theorem~\ref{general-change} applies. The right-hand side of~\eqref{eq:general-change} is the action of the quadratic form defined by the matrix~$(c_{ij})$ on the vector $\Flux(\alpha,\tau_i)$. Since any vector of fluxes can be obtained by choosing an appropriate $\alpha$, this vanishes for all $\alpha$ if and only if the matrix $(c_{ij})$ is skew-symmetric. But since $k$ is odd, the matrix $(c_{ij})$ is symmetric by Theorem~\ref{alexander basis transformations}, so~$f$ is helicity-preserving if and only if all the $c_{ij}$ are zero. Looking at our construction, we see that if $f \in \Torelli(\Omega)$, then $s_i' = s_i$. In fact, this means $t_i' = t_i$ as well. Thus $f$ acts trivially on $H_*(\bdy\Omega)$ if and only if all the $c_{ij}$ vanish. This completes the proof. 
\end{proof}

We note that $\HP(\Omega)$ is generally somewhat larger than $\Torelli(\bdy\Omega)$: if $\Omega$ is a handlebody with $n$ handles any (orientation-preserving) permutation of the handles will surely preserve helicity, but not act trivially on $H_*(\Omega)$ or $H_*(\bdy\Omega)$. In our construction above, the matrix $M$ will still be the identity matrix due to our careful choice of basis.

We have now given a full account of the interplay between the map $f$ and the form~$\alpha$ in determining the effect of a mapping on the helicity of a form. Our previous theorems are now revealed as easy corollaries of Theorem~\ref{general-change}: Proposition~\ref{fluxless-invariance} states that if $\alpha$ is fluxless, then the helicity of $\alpha$ is preserved by any diffeomorphism $f$. Indeed, the $\Flux(\alpha,\tau_i)$ vanish for all $i$, which means that the right hand side of~\eqref{boxformula} vanishes and helicity is invariant.

On the other hand, Proposition~\ref{homotopic-invariance} states that if the map $f$ is homotopic to the identity map, then helicity is invariant under $f$ for any form. In our new language, this is the trivial statement that the identity element in the mapping class group of~$\Omega$ is in the subgroup $\HP$. We have seen above in Corollary~\ref{torus-g} that $\HP = \{e\}$ for the solid torus. But in general $\HP$ is much larger. Figure~\ref{fig:doubletorus} shows an explicit example of a helicity-preserving diffeomorphism of a domain in $\R^3$ which is not homotopic to the identity. Let $\Omega$ be the solid 2-holed torus, and let $\alpha$ be the curve around the ``waist'' of the torus. Our map $f$ will be a Dehn twist around $\alpha$ extended to the interior of $\Omega$ along the spanning disk  for $\alpha$ shown in the picture.

\begin{figure}
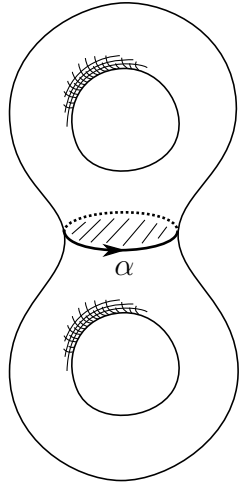

\begin{overpic}{doubletorus}
\put(22,43){$\alpha$}
\end{overpic}
\caption{The figure shows how to construct a diffeomorphism from the solid two-holed torus $\Omega$ to itself which is helicity-preserving for all closed Dirichlet $2$-forms on $\Omega$ but not homotopic to the identity. We take a Dehn twist around the curve $\alpha$ on $\bdy\Omega$ and extend the twist to the interior of $\Omega$ across the spanning disk shown above. The resulting map induces the identity on the homology of $\Omega$ and $\bdy\Omega$, and so preserves helicity by Corollary~\ref{classification}, but it is not homotopic to the identity.}
\label{fig:doubletorus}
\end{figure}

\begin{lemma} The map $f$ of Figure~\ref{fig:doubletorus} is helicity-preserving but not homotopic to the identity map.
\end{lemma}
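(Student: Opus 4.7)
The plan is to handle the two assertions separately, invoking Corollary~\ref{classification} for helicity-preservation and reducing the non-homotopy claim to a classical fact about mapping class groups of surfaces.

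For the helicity-preserving part, I would verify the two hypotheses of Corollary~\ref{classification}: that $f \in \Torelli(\Omega)$ and that $\bdy f_*$ is the identity on $H_1(\bdy\Omega)$. Since $f$ is supported in a collar of the spanning disk $D$ for $\alpha$, and $D$ separates $\Omega$ into two solid tori $\Omega_1,\Omega_2$, I would pick generators for $H_1(\Omega) \cong \Z \oplus \Z$ represented by longitudinal curves, one in each $\Omega_i$, pushed off the support of $f$. These are fixed pointwise, so $f_*$ is the identity on $H_1(\Omega)$ and $f \in \Torelli(\Omega)$. For the boundary action, the curve $\alpha$ is a separating simple closed curve on the genus-$2$ surface $\bdy\Omega$ and is therefore null-homologous there. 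The standard formula for the action of a Dehn twist on first homology, $T_\alpha(\beta) = \beta + (\beta \cdot \alpha)[\alpha]$, then gives $\bdy f_* = (T_\alpha)_* = \mathrm{id}$ on $H_1(\bdy\Omega)$. Corollary~\ref{classification} now yields $f \in \HP(\Omega)$.

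For the non-homotopy claim, understood in the sense of Proposition~\ref{homotopic-invariance} (isotopy through diffeomorphisms), I would argue by contradiction. If $f_t$ were a smooth isotopy of $\Omega$ from $\mathrm{id}$ to $f$, then restriction to the boundary would produce an isotopy of $\bdy\Omega$ from the identity to $f|_{\bdy\Omega} = T_\alpha$, making $T_\alpha$ trivial in the mapping class group of the genus-$2$ surface. But a classical result (Powell~\cite{}, Johnson~\cite{}) asserts that the Dehn twist about any separating simple closed curve on a surface of genus $\geq 2$ represents a nontrivial element of the Torelli subgroup of the mapping class group --- in fact, one of infinite order. This contradiction completes the argument.

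The main obstacle is the second step, since it relies on the classical but nonelementary fact that separating twists are nontrivial in the mapping class group of a higher-genus surface; everything else reduces to the Dehn-twist formula on $H_1$ and to the classification given by Corollary~\ref{classification}. A minor subtlety worth flagging is the distinction between homotopy and isotopy through diffeomorphisms: because $\Omega$ is aspherical and $f$ acts trivially on $\pi_1(\Omega)$, $f$ is in fact homotopic to the identity as a continuous map, so the content of the lemma is genuinely the failure of smooth isotopy, which matches the hypothesis used in Proposition~\ref{homotopic-invariance}.
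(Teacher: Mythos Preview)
Your proof is correct and follows essentially the same two-step strategy as the paper: use Corollary~\ref{classification} by showing $\bdy f_*$ is the identity on $H_1(\bdy\Omega)$, then restrict to the boundary and invoke the nontriviality of the Dehn twist in the mapping class group of the genus-$2$ surface.

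There are two small differences worth noting. For the first part, the paper simply observes that one can choose generators for $H_1(\bdy\Omega)$ disjoint from $\alpha$ (hence fixed pointwise by $\bdy f$), whereas you compute with the Dehn-twist formula $T_\alpha(\beta) = \beta + (\beta\cdot\alpha)[\alpha]$ and use that $[\alpha]=0$ since $\alpha$ is separating; these are equivalent. For the second part, the paper cites the more elementary fact that a Dehn twist about any \emph{essential} simple closed curve is nontrivial in the mapping class group (Farb--Margalit, Proposition~2.1), while you invoke the stronger result that separating twists are nontrivial in the Torelli subgroup (Powell, Johnson). The paper's citation is the minimal one needed here.

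Your closing remark about the homotopy/isotopy distinction is a genuine clarification that the paper glosses over: since $\Omega$ is aspherical and $f_*$ is the identity on $\pi_1$, the map $f$ \emph{is} freely homotopic to the identity as a continuous self-map, so the content of the lemma is indeed the failure of isotopy through diffeomorphisms (equivalently, nontriviality in the mapping class group of $\Omega$), which is exactly the hypothesis relevant to Proposition~\ref{homotopic-invariance}.
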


\begin{proof}
To see the first part, by the corollary above we must show that the map $(\bdy f)_*\co H_1(\bdy\Omega) \rightarrow H_1(\bdy\Omega)$ is the identity. But we can take a set of generators for $H_1(\bdy\Omega)$ which are fixed by $\bdy f$ as long as we stay away from $\alpha$.

To show the second, observe that if $f$ was homotopic to the identity, then $\bdy f$ would be as well. But $\bdy f$ is a Dehn twist around an essential curve in $\bdy\Omega$, so $\bdy f$ is nontrivial in the mapping class group of $\bdy\Omega$~\cite[Proposition 2.1]{fmprimer}.
\end{proof}

\section{Future Directions}

Our perspective on helicity has allowed us to observe three new kinds of invariance for $\Hel(\alpha)$: invariance under change of volume form on $S^{2k}$, invariance in the cohomology class $[\alpha_x \wedge \alpha_y]$ in $H^{2k+2}(C_2[\Omega],\bdy C_2[\Omega])$, and invariance under diffeomorphisms of $\Omega$ which preserve the homology of $\bdy\Omega$. We have made stronger the analogy between helicity for forms and finite-type invariants for knots and links. And we have explained the effect of any diffeomorphism of $\Omega$ on the helicity of a form $\alpha$. We devote the rest of the paper to observing some immediate consequences of our point of view, and to suggesting some future directions for further study.

\subsection{Submanifold helicities}

So far we have only considered the case where $\Omega$ is a top-dimensional subdomain of $\R^{2k+1}$. We can define an analogous helicity just as easily for closed Dirichlet $(k+1)$-forms on an $n$-dimensional submanifold $\Omega$ of $\R^{m}$ by
\begin{equation}
\Hel(\alpha) = \int_{C_2[\Omega]} \alpha_x \wedge \alpha_y \wedge g^* \dVol_{S^{m-1}}
\end{equation}
as long as the integrand $\Phi_m=\alpha_x \wedge \alpha_y \wedge g^* \dVol_{S^{m-1}}$ is a $2n$-form.  This requires that $2k + 2 + m-1 = 2n$, i.e., that $m = 2n - 2k - 1$.  We refer to such an integral as a \emph{$(k,n,m)$-helicity} and note that the helicity from Definition~\ref{fthelicity} is the $(k,2k+1,2k+1)$-helicity. 

\begin{question} \label{questions}
Two questions arise immediately:
\begin{enumerate}
\item \label{qinvt} For which values of $k$, $n$ and $m$ is $(k,n,m)$-helicity an invariant?
\item \label{qtrivial} When is the invariant nontrivial?
\end{enumerate}
\end{question}

As before, we know that $(k,n,m)$-helicity will be an invariant if the closed form $\alpha_x \wedge \alpha_y \wedge g^* \dVol_{S^{m-1}}$ is Dirichlet, i.e., if it vanishes on the boundary of $C_2[\Omega]$.  Following the proof of Lemma~\ref{alpha}, we only have to worry about the face $(12)$ of this boundary, which is diffeomorphic to $\Omega \cross S^{m-1}$. On this face, $\alpha_x \wedge \alpha_y$ pulls back to $\alpha \wedge \alpha$. Our previous argument depended on the observation that this was a $(2k+2)$-form $\alpha \wedge \alpha$ on the $n=(2k+1)$-manifold $\Omega$. In general, $2k + 2$ may not be greater than $n$, so we cannot depend on this argument.  However, we note that if $k+1$ is odd, then $\alpha \wedge \alpha$ vanishes by antisymmetry, providing a partial answer to the first question above.  A standard example here is $(0,2,3)$-helicity, which should measure the linking of a 1-form on a surface in $\R^3$.  We do not know whether the $(1,5,7)$-helicity measuring the linking of a $2$-form on a 5-dimensional surface in $\R^7$ is an invariant. The $(-1,1,3)$-helicity of $0$-forms on a curve in $\R^3$ turns out to be precisely the writhing number of the curve, so we know that this helicity is not an invariant.

What about the second question?  For a contractible domain $\Omega = D^{n}$, the configuration space $C_2[\Omega]$ has the topology of $D^n \cross D^n - \{ \text{pt} \} = D^{n+1} \cross S^{n-1}$ as we saw above. In this case, only the cohomology groups $H^*(C_2[\Omega],\bdy C_2[\Omega])$ where $* = 0, n+1, 2n$ are nontrivial. So for a $(k,n,m)$-helicity to be nontrivial in this case, we must have $2k+2 = n+1$, which only occurs for $(k,2k+1,2k+1)$-helicity. But if $\Omega$ has nontrivial homology, then $C_2[\Omega]$ has more homology and $(k,n,m)$-helicity might be nontrivial.  For example, we conjecture that if $\Omega$ has $1$-dimensional homology, then the invariant $(0,2,3)$-helicity is nontrivial on $\Omega$.      

When $k$ was even, we showed in Proposition~\ref{even-k} that helicity could only extend to a function that was identically zero.  There is a corresponding result for $(k,n,m)$-helicities:

\begin{proposition}
If $k+n$ is even, then the $(k,n,m)$-helicity is identically zero for any $(k+1)$-form $\alpha$.
\end{proposition}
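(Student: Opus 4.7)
The plan is to mirror the proof of Proposition~\ref{even-k} by invoking the involution $a\co C_2[\Omega]\to C_2[\Omega]$ that exchanges the two configuration points. This map extends continuously from $C_2(\Omega)$ to the Fulton--MacPherson compactification, and the strategy is to show that $a^*\Phi$ differs from $\Phi$ by a sign incompatible with the orientation effect of $a$ on $C_2[\Omega]$, forcing the helicity integral $\int_{C_2[\Omega]}\Phi$ to be its own negative under the stated parity hypothesis.

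First I would determine how $a$ acts on the orientation of $C_2[\Omega]$. Since $C_2[\Omega]$ has dimension $2n$ and $a$ is locally the linear swap of two $n$-dimensional coordinate blocks, its Jacobian determinant is $(-1)^{n^2}=(-1)^n$. Hence by change of variables, $\int_{C_2[\Omega]} a^*\omega = (-1)^n\int_{C_2[\Omega]}\omega$ for any top-dimensional form $\omega$.

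Next I would compute $a^*\Phi$ piece by piece. Pullback by $a$ exchanges the two $(k{+}1)$-forms, so $a^*(\alpha_x\wedge\alpha_y) = \alpha_y\wedge\alpha_x = (-1)^{(k+1)^2}\alpha_x\wedge\alpha_y = (-1)^{k+1}\alpha_x\wedge\alpha_y$. The Gauss map satisfies $g\circ a = \sigma\circ g$, where $\sigma\co S^{m-1}\to S^{m-1}$ is the antipodal map of degree $(-1)^m$; thus $a^*(g^*\dVol_{S^{m-1}}) = (-1)^m g^*\dVol_{S^{m-1}}$. Combining these signs gives $a^*\Phi = (-1)^{k+1+m}\Phi$. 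Equating the two expressions for $\int_{C_2[\Omega]} a^*\Phi$ yields
\begin{equation*}
\bigl[(-1)^{k+1+m}-(-1)^n\bigr]\Hel(\alpha) = 0,
\end{equation*}
so $\Hel(\alpha)$ vanishes whenever $k+1+m$ and $n$ have opposite parity. Substituting the top-dimensional constraint $m=2n-2k-1$, the exponent $k+1+m-n = 2n-k$ has the same parity as $k$, and comparing with $n$ shows the bracket is nonzero exactly in the stated parity case, yielding the proposition.

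The main obstacle is the careful bookkeeping of these three signs---the degree of the antipodal map on $S^{m-1}$, the graded-commutation sign for swapping $\alpha_x$ and $\alpha_y$, and the orientation change of $a$ on the $2n$-dimensional $C_2[\Omega]$---and confirming that after imposing $m=2n-2k-1$ they combine into an identity of the form $\Hel(\alpha)=-\Hel(\alpha)$ under the parity hypothesis on $k+n$. All other ingredients (smoothness of the extension of $a$ to the compactification, and the fact that $\Phi$ is integrable and Dirichlet by the configuration-space argument of Lemma~\ref{alphavanishesonbdy}) follow exactly as in the $n=2k+1$ case.
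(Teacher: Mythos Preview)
Your proof is essentially identical to the paper's: both invoke the point-swapping involution $a$ on $C_2[\Omega]$, compute the three signs (orientation effect $(-1)^{n^2}=(-1)^n$, graded commutation $(-1)^{(k+1)^2}=(-1)^{k+1}$, and antipodal degree on $S^{m-1}$), and conclude $(-1)^k\Hel(\alpha)=(-1)^n\Hel(\alpha)$. The only cosmetic difference is that you carry the antipodal degree as $(-1)^m$ and substitute $m=2n-2k-1$ at the end, whereas the paper observes immediately that $m$ is odd so the degree is $-1$; note also that your expression $k+1+m-n$ equals $n-k$, not $2n-k$, though both have the same parity so the conclusion is unaffected.
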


\begin{proof}
The argument is similar to that which proves Proposition~\ref{even-k}.  We consider the automorphism $a$ of $C_2(\Omega)$ that interchanges $x$ and $y$; it extends naturally to $C_2[\Omega]$.  It changes the orientation of $C_2[\Omega]$ by a factor of $(-1)^{n^2}$.  

Next, we take the pullback $a^*\Phi_m = \alpha_y \wedge \alpha_x \wedge a^*g^*\dVol_{S^{m-1}}$. The map $a$ induces an antipodal map on $S^{m-1}$; since $m$ is odd, such a map has degree $-1$.  Hence, $a^*g^*\dVol_{S^{m-1}}= -g^*\dVol_{S^{m-1}}$.  Also, $\alpha_y \wedge \alpha_x = (-1)^{(k+1)^2} \alpha_x \wedge  \alpha_y$.  Combining these results, $a^*\Phi = (-1)^k \Phi$.  We then compute
\begin{align*}
\int_{C_2[\Omega]} a^*\Phi & = \int_{a\left(C_2[\Omega] \right)} \Phi \\
\int_{C_2[\Omega]} (-1)^k \Phi & = \int_{(-1)^n C_2[\Omega]} \Phi \\
(-1)^k \Hel(\alpha) & = (-1)^n\Hel(\alpha)
\end{align*}

If $k$ and $n$ have the opposite parity, this implies that $\Hel(\alpha) = -\Hel(\alpha)$, i.e., that helicity is zero, and proves our proposition.  If $k$ and $n$ have the same parity, the conclusion is a tautology: $\Hel(\alpha) = \Hel(\alpha)$.
\end{proof}

\subsection{Helicity on 3-manifolds with boundary}

Arnol'd and Khesin give a definition for helicity for $2$-forms on a simply-connected $3$-manifold $M$ without boundary in~\cite{MR1612569}. If the manifold is not simply connected, their method works for ``fluxless'' $2$-forms which do not represent nontrivial classes in $H^2(M)$, but fails for forms which do represent in $H^2(M)$. Our work so far allows us to define and understand helicity for all forms on $2k+1$-manifolds $M^{2k+1}$ with boundary with an embedding into $\R^{2k+1}$. As we have shown, the helicity of a form on such a domain depends on the embedding of the domain into $\R^{2k+1}$. However, it is easy to remove this dependence, allowing us to define a kind of helicity on some 3-manifolds with boundary which is independent of their embedding into Euclidean space.

\begin{definition}
Let $M^{2k+1}$ be a compact, oriented manifold with smooth boundary which admits an orientation-preserving diffeomorphic embedding $f$ into $\R^{2k+1}$. Let $\alpha$ be a Dirichlet $(k+1)$-form on $M^{2k+1}$. Let $\tau_1, \dots, \tau_n$ be a basis for $H^{k+1}(M,\bdy M)$, and let $F(\alpha)$ be the largest number so that $\Flux(\alpha,\tau_i) = k_i F(\alpha)$ for some $k_i \in \Z$ or $0$ if no such $F(\alpha)$ exists.
s
We define the \emph{residual helicity} of $\alpha$ to be
\begin{equation*}
\operatorname{ResHel}(\alpha) = \Hel(f_*(\alpha)) \mod F(\alpha).
\end{equation*}
if $F(\alpha) > 0$ and $\operatorname{ResHel}(\alpha) = 0$ otherwise.
\end{definition}

We have immediately from Theorem~\ref{general-change} that

\begin{corollary}
The residual helicity $\operatorname{ResHel}(\alpha)$ does not depend on $f$ and is a diffeomorphism invariant of $\alpha$.
\end{corollary}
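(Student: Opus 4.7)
The plan is to verify two claims: first, that $\operatorname{ResHel}(\alpha)$ does not depend on the choice of embedding $f$; and second, that whenever $h\co M\to M'$ is an orientation-preserving diffeomorphism, $\operatorname{ResHel}(\alpha) = \operatorname{ResHel}(h_*\alpha)$. Both reduce quickly to Theorem~\ref{general-change} combined with the integrality statement packaged into Theorem~\ref{alexander basis transformations}.

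For the first claim, I would take two orientation-preserving diffeomorphic embeddings $f_1, f_2\co M\hookrightarrow\R^{2k+1}$, set $\Omega_j := f_j(M)$, and form the orientation-preserving diffeomorphism $g := f_2\circ f_1^{-1}\co \Omega_1\to\Omega_2$ between subdomains of $\R^{2k+1}$. Since $g_*(f_1)_*\alpha = (f_2)_*\alpha$, Theorem~\ref{general-change} applied to $g$ yields
\begin{equation*}
\Hel((f_2)_*\alpha) - \Hel((f_1)_*\alpha) = \sum_{i,j} c_{ij}\,\Flux((f_1)_*\alpha,\tau_i)\,\Flux((f_1)_*\alpha,\tau_j),
\end{equation*}
where the $\tau_i$ are a basis of $H_{k+1}(\Omega_1,\bdy\Omega_1)$ and the $c_{ij}$ are entries of the matrix of $\bdy g_*$ on an Alexander basis. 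Because $g$ is a homeomorphism and the Alexander basis can be chosen from integral classes, the $c_{ij}$ are integers. Pulling back through $f_1$, each $\Flux((f_1)_*\alpha,\tau_i) = \Flux(\alpha, f_1^{-1}\tau_i) = k_i F(\alpha)$ for some $k_i\in\Z$. The right-hand side therefore equals $F(\alpha)^2 \sum_{ij} c_{ij} k_i k_j$, which is a multiple of $F(\alpha)$, so the two helicities represent the same residue modulo $F(\alpha)$.

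For the second claim, given a diffeomorphism $h\co M\to M'$ and any embedding $f'\co M'\hookrightarrow\R^{2k+1}$, I would use $f := f'\circ h$ as an embedding of $M$. Then $f_*\alpha = (f')_*(h_*\alpha)$, so $\Hel(f_*\alpha) = \Hel((f')_*(h_*\alpha))$; the former computes $\operatorname{ResHel}(\alpha)$ and the latter computes $\operatorname{ResHel}(h_*\alpha)$ with the same choice of ambient helicity. Because $h$ induces an isomorphism $H_{k+1}(M,\bdy M)\to H_{k+1}(M',\bdy M')$ carrying a basis to a basis and preserving the integer flux lattice, $F(h_*\alpha) = F(\alpha)$, so the two residual helicities coincide as elements of $\R/F(\alpha)\Z$.

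The main obstacle I anticipate is the integrality of the $c_{ij}$: the paper works over real de Rham cohomology, so one must verify that the Alexander basis of Theorem~\ref{alexander basis existence} can be taken from integral classes — for instance the embedded cycles pictured in Figure~\ref{fig:alexanderbasis} — on which $\bdy g_*$ then acts by an integer matrix. A secondary subtlety is the precise meaning of ``a multiple of $F(\alpha)$'': the change is actually $F(\alpha)^2$ times an integer, which sits inside $F(\alpha)\Z$ under the natural reading whenever the flux lattice is taken in the sharpest form (or equivalently when the modulus is refined to $F(\alpha)^2$). Once these points are settled, both assertions drop out directly from Theorem~\ref{general-change}.
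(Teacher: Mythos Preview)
Your argument is correct and matches the paper's approach exactly: the paper offers no proof beyond asserting that the corollary follows ``immediately from Theorem~\ref{general-change}'', and your two claims supply precisely that derivation. Your caveat about $F(\alpha)$ versus $F(\alpha)^2$ is on point --- the change of helicity under a change of embedding is an integer multiple of $F(\alpha)^2$, not of $F(\alpha)$, so the modulus in the definition really ought to be $F(\alpha)^2$ for the residue to be well-defined; this is a genuine imprecision in the paper's definition rather than a gap in your reasoning.
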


On most domains, this helicity is fairly weak, since it can vanish if the fluxes of the form $\alpha$ are irrational multiples of one another. So this definition is really most useful when $H^{k+1}(M) = \R$, as in the solid torus. But it may point the way towards defining a more powerful version of helicity on an arbitrary $2k+1$-manifold with boundary.

\subsection{Cross-helicities}

So far we have only considered configuration spaces of two points in a single domain in $\R^{2k+1}$. But we could also construct similar configuration spaces where the points are restricted to lie in different domains. For instance, consider the configuration space $X \cross Y$ where $X$ and $Y$ are disjoint linked solid tori of the form $S^{k} \cross D^{k+1}$ in $\R^{2k+1}$. This configuration space simply restricts one point to lie in each torus. As before, there is a Gauss map $g\co X \cross Y \rightarrow S^{2k}$ and we can define the cross-helicity of a pair of closed Dirichlet $(k+1)$-forms $\alpha_x$ and $\alpha_y$ defined on $X$ and $Y$ by 
\begin{equation*}
\Hel(\alpha_x,\alpha_y) = \int_{X \cross Y} \alpha_x \wedge \alpha_y \wedge g^* \dVol_{S^{2k}}.
\end{equation*}
As before, we observe that $\alpha_x \wedge \alpha_y$ is a closed Dirichlet $(2k+2)$-form on $X \cross Y$. But this means that $\alpha_x \wedge \alpha_y$ represents a cohomology class in $H^{2k+2}(X \cross Y)$.

Now $\alpha_x$ and $\alpha_y$ represent classes in $H^{k+1}(X,\bdy X) \simeq \R$ and $H^{k+1}(Y,\bdy Y) \simeq \R$. Since $X \simeq Y \simeq S^{k} \cross D^{k+1}$, $H_{k+1}(X,\bdy X)$ and $H_{k+1}(Y,\bdy Y)$ are generated by cycles spanning the $D^{k+1}$ and the classes represented by $\alpha_x$ and $\alpha_y$ are determined by their flux across these spanning cycles. Let us call the cohomology duals to the spanning cycles $[g_x]$ and $[g_y]$ so that 
\begin{equation*}
[\alpha_x] = \Flux(\alpha_x) [g_x] \in H^{k+1}(X,\bdy X), \qquad [\alpha_y] = \Flux(\alpha_y) [g_y] \in H^{k+1}(Y,\bdy Y).
\end{equation*}
We note that $g_x$ and $g_y$ are the Poincar\'{e} duals of the generators $s_x$ and $s_y$ for the homology of the $S^k$ in $X$ and $Y$ with respect to the top classes in $H^{2k+1}(X)$ and $H^{2k+1}(Y)$ which integrate to $1$ on their respective domains.

In $X \cross Y$, the cohomology class represented by $\alpha_x \wedge \alpha_y$ is simply $\Flux(\alpha_x) \Flux(\alpha_y) [g_x] \wedge [g_y]$, which is the Poincar\'{e} dual in $X \cross Y$ of $[s_x] \wedge [s_y]$. Now if we restrict the Gauss map to the core $S^k \cross S^k$ of $X \cross Y$, we see that the pullback of the volume form on $S^{2k}$ represents the cohomology class $\Lk(X,Y) [s_x] \wedge [s_y]$. (Here $\Lk$ is the linking number of $X$ and $Y$ in $\R^{2k+1}$.)

This reproves the standard result that 
\begin{equation*}
\Hel(\alpha_x,\alpha_y) = \Flux(\alpha_x) \Flux(\alpha_y) \Lk(X,Y)
\end{equation*}
using our language for forms in linked tubes.

As in the standard helicity integral, the pullback of the area form on $S^{2k}$ to our configuration space is a multiple of the Poincar\'{e} dual of $\alpha_x \wedge \alpha_y$. In the original helicity integral, where $\alpha_x$ and $\alpha_y$ were pulled back from the same $\alpha$ this multiple measured a new topological property of the form $\alpha$. We could see that we were measuring  new information in this case because the homology of $C_2[\Omega]$ had a new class in $H^{2k}(C_2[\Omega])$ which was not generated by the topology of the domain $\Omega$.

On the other hand, when we calculate the cross-helicity of linked tubes, all of the homology classes involved are generated by the topology of the original domains $X$ and $Y$. This means that the cross-helicity is really an invariant of the core spheres of $X$ and $Y$-- the forms $\alpha_x$ and $\alpha_y$ are multiples of the Poincar\'{e} duals to the generators of these spheres, and contribute no interesting information other than their fluxes.

Similarly, several authors have defined ``triple-helicity'' integrals for the case of three divergence-free vector fields defined on three solid tori $X$, $Y$, and $Z$ in space. The resulting vector field (or form) invariants turn out to be equal to 
\begin{equation*}
\Flux(\alpha_x) \Flux(\alpha_y) \Flux(\alpha_z) I(X,Y,Z)
\end{equation*}
where $I(X,Y,Z)$ is a topological invariant of the three tubes. For example, a theorem of this kind appears in Proposition 6.1 of  Komendarczyk~\cite{MR2544738}.

We can now see that while such theorems are appealing, none of these integrals is likely to easily generalize to a meaningful invariant of 2-forms on a contractible domain in $\R^3$ defined by integration over $C_3[D^3]$. We could repeat the procedure above and generate a closed Dirichlet $6$-form $\alpha_x \wedge \alpha_y \wedge \alpha_z$ on  $C_3[D^3]$. Unfortunately, in the $9$-dimensional space $C_3[D^3]$, the cohomology group $H^{6}(C_3[D^3],\bdy C_3[D^3]) \simeq H_3(C_3[D^3]) \simeq 0$, since the (absolute) cohomology of $C_3[D^3]$ is known to be generated by 2-forms coming from the three Gauss maps $g_{xy}(x,y,z) = \nicefrac{x-y}{\norm{x-y}}$, $g_{yz}(x,y,z) = \nicefrac{y-z}{\norm{y-z}}$ and $g_{zx}(x,y,z) = \nicefrac{z-x}{\norm{z-x}}$. Thus any such triple-helicity integral must be zero. It remains an important open problem to construct a nontrivial triple-helicity integral for forms on a contractible domain.

\subsection{Helicity and finite-type invariants}

On a last and somewhat speculative note, we wonder whether the finite-type invariants (expressed as integrals over certain configuration spaces of points on a knot) could be used to obtain integral invariants for divergence-free fields tangent to the boundary of a single knotted flux tube $\Omega$. The values of invariants would not be interesting-- we expect each to have the value $\Flux(\alpha) I(\Omega)$ where $I(\Omega)$ is the corresponding finite-type invariant of the tube $\Omega$-- but the integrals could in principle be used to obtain sharper energy bounds for such vector fields than the classical results of Freedman and He~\cite{fh1}. The major obstacle here seems to be that the construction of the finite-type invariants as integrals depends on the fact that the configuration spaces of circles are disconnected (the order of points on the circle cannot change in a connected component), allowing different components to be attached to one another to form more complicated spaces. We do not yet understand the analogous constructions for configuration spaces of points in solid tori.

\section{Acknowledgements}

The authors are grateful for many colleagues and friends with whom we have had fascinating and productive conversations. We are particularly indebted to Fred Cohen, Elizabeth Denne, Dennis DeTurck, Herman Gluck, Martha Holmes, Jamie Jorgensen, Will Kazez, Tom Kephart, Rafal Komendarczyk, Robert Kotiuga, Clint McCrory, Paul Melvin, Clay Shonkwiler, Jim Stasheff, and Shea Vela-Vick. 

\bibliography{helicity-forms,cantarella}{}
\bibliographystyle{plain}

\appendix
\section{Stokes' Theorem for Product Manifolds} \label{gv} 

Let $X^n$ and $Y^m$ be manifolds with boundary, where $n$ is finite but $m$ may be infinite.  Consider the product manifold $M = X \times Y$.  Let $\alpha$ be a smooth $(n+k)$-form on $M$, for $k \geq 0$.  By integrating $\alpha$ over $X$, we can construct a map
\begin{equation*}
\begin{array}{lccl}
\pi_*  \co & \Lambda^{n+k}(X \times Y) & \rightarrow  & \Lambda^{k}(Y), \\
\pi_*  \co & \alpha 					& \mapsto	  & \int_X{ \alpha }.
\end{array}
\end{equation*}

Here we are following Volic's notation~\cite{MR2300426} of $\pi_*$, even though this map is not a push-forward of forms; rather we map merely via integration.

\renewcommand{\arraystretch}{1.5}

{\bf Stokes' Theorem.} Via this setup, the differential of the $k$-form $\pi_* \alpha$ on $Y$ is
$$\begin{array}{lcccc}
d\pi_* \alpha   	& = &  \pi_* d\alpha & - & (\partial \pi)_* \alpha \\
d \int_X { \alpha } & = &  \int_X { d\alpha } & - & \int_{\partial X} {\alpha} 
\end{array}$$

{\it Rationale.}  Express $\alpha$ as the sum of three smooth forms:  $\alpha = \alpha_n + \alpha_{n-1} + \beta$, where $\alpha_n = \dVol_x \wedge \cdots$ includes $n$ elementary $dx_i$ forms, $\alpha_{n-1}$ includes $n-1$ elementary $dx_i$ forms, and $\beta$ has less than $n-1$ elementary $dx_i$ forms.  We consider these three forms in Stokes' Theorem above.  Sample terms include:
$$\begin{array}{|c|lccc|}
\hline
\mathrm{Form} && \mathrm{Sample\ Term} && \\
\hline
\alpha_n 	& f(x,y) \; & \dVol_x 	& \; \wedge \; & dy_{i_1} \wedge \cdots \wedge dy_{i_k} \\
\alpha_{n-1} & f(x,y) \; & dx_1 \wedge \cdots \wedge \widehat{dx_j} \wedge dx_n & \; \wedge \; & dy_{i_1} \wedge \cdots \wedge dy_{i_{k+1}} \\
\beta & f(x,y) \; & dx_1 \wedge \cdots \wedge \widehat{dx_j} \cdots \widehat{dx_k}  \wedge dx_n & \; \wedge \; & dy_{i_1} \wedge \cdots \wedge dy_{i_{k+2}} \\
\hline
\end{array}$$
\vspace{0.05in}
\noindent Here the hat on $\widehat{dx_j}$ reports that term does not appear in the wedge product.

Consider the form $\alpha_n$ first.  Note that $\int_{\partial X} {\alpha_n} = 0$ since $\alpha_n$ has greater dimension in $x$ variables than the dimension of $\partial X$.  Since $\alpha_n$ already contains the volume form on $X$, its differential $d\alpha_n$ will only introduce $y$ terms, so we may differentiate under the integral sign:
\begin{equation*}
d \int_X { \alpha_n }  =   \int_X { d\alpha_n } \qquad. 
\end{equation*}

The form $\alpha_{n-1}$ does not contain the entire volume form on $X$, so $ \int_X { \alpha_{n-1} } = 0$.  When computing $\int_{\partial X} {\alpha_{n-1}(x,y)}$, we may hold $y$ fixed and apply Stokes' Theorem on $X$ to each elementary form of $(n-1)$ $dx$ terms; thus $\int_{\partial X} {\alpha_{n-1}} = \int_X { d\alpha_{n-1} }$.

Finally, the third term $\beta$ is smooth and does not yield a top-dimensional form (in terms of $x$ variables) in any of the three integrals, so they all vanish:
$$d \int_X { \beta }  =   \int_X { d\beta } = \int_{\partial X} {\beta} = 0 \qquad.$$ 

For our purposes usually $k=0$, and it is difficult to calculate $d\int_X \alpha$ directly.  Thus, we shall invoke Stokes' Theorem.  Usually, either $\alpha$ is closed or the integral of $d\alpha$ is straightforward, so our efforts concentrate upon computing $\int_{\partial X} {\alpha}$.  Its important terms have $(n-1)$ $dx$ terms and include a $dy$ term.  When $Y$ is the space of embeddings or knots, we may view this term as dual to a variational vector field.

\section{The cohomology of domains in $\R^{2k+1}$ (or $S^{2k+1}$)} 
\label{hom}  

The purpose of this section is to give a self-contained exposition of some basic facts about the homology of domains in $\R^{2k+1}$ (or $S^{2k+1}$). In dimension 3, we gave an exposition of much of this material (without detailed proofs) in~\cite{MR1901496}. But in this dimension, this material is certainly not original. For instance, between \cite[Chapter~3]{MR2067778} and \cite{hiptmair-2008}, almost all of the three-dimensional version of Theorem~\ref{alexander basis existence} has been published before. We do not yet know a reference for Theorem~\ref{alexander basis transformations}.

In what follows, we will take coefficients for homology to lie in $\R$. In this case\footnote{In dimension 3, \cite[p.119]{MR2067778} shows that since there is no torsion in any of the homology or cohomology groups, we could take coefficients in $\mathbb{Z}$ and get the same conclusions.}, the Universal Coefficient Theorem yields a natural duality isomorphism which pairs a homology class $x$ with the dual cohomology class $x^*$. We will let $[\Omega] \in H_{2k+1}(\Omega,\bdy\Omega)$ denote the top class of this orientable manifold and $[\Omega]^*$ denote the dual class in $H^{2k+1}(\Omega,\bdy\Omega)$. Similarly, $[\bdy\Omega] \in H_{2k}(\bdy\Omega)$ will be the top class of $\bdy\Omega$ and $[\bdy\Omega]^*$ its dual. We will let $\tilde{H}$ denote reduced homology. We will take the linking number of two $k$-cycles in $\R^{2k+1}$ to be given by $\Lk(a,b) = \Int(a,B)$ where $b = \bdy B$ and $\Int$ is the intersection number. We recall that for $k$-cycles, $\Lk(b,a) = (-1)^{(k+1)^2} \Lk(a,b)$~\cite[Proposition 11.13]{MR1454127}. 

We will also need a lemma.
\begin{lemma}
\label{whocares} 
For $n < 2k$, if we think of $\R^{2k+1}$ as $S^{2k+1} - \{x\}$ with $x \in \Int \ocomp$ then for a compact domain with boundary $\Omega \subset \R^{2k+1}$, 
\begin{equation*}
H_n(\R^{2k+1} - \Omega) = H_n(S^{2k+1} - \Omega).
\end{equation*}
\end{lemma}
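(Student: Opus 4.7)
The plan is to realize $\R^{2k+1} - \Omega$ inside $S^{2k+1} - \Omega$ as the complement of a single point $x$, and then use a Mayer--Vietoris argument to show that removing a single interior point from a $(2k+1)$-dimensional manifold does not change homology below degree $2k$.

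First I would set $N := S^{2k+1} - \Omega$, so that $N - \{x\} = \R^{2k+1} - \Omega$, using the hypothesis that $x \in \Int \ocomp$. Because $x$ is an interior point of the $(2k+1)$-manifold $N$, it has an open neighborhood $B \subset N$ diffeomorphic to an open ball in $\R^{2k+1}$. I would then apply Mayer--Vietoris with $U = B$ and $V = N - \{x\}$, noting that $U \cup V = N$ and $U \cap V = B - \{x\}$, which deformation retracts onto $S^{2k}$.

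Since $U$ is contractible and $H_n(S^{2k}) = 0$ for $0 < n < 2k$, the relevant portion of the Mayer--Vietoris sequence reads
\begin{equation*}
0 \longrightarrow H_n(V) \longrightarrow H_n(N) \longrightarrow 0
\end{equation*}
for $1 \le n < 2k$, immediately giving $H_n(\R^{2k+1} - \Omega) = H_n(N-\{x\}) \cong H_n(N) = H_n(S^{2k+1} - \Omega)$ in this range. For $n = 0$ (covered by the hypothesis $n < 2k$ as soon as $k \ge 1$) the statement reduces to the observation that removing a single point from a connected $(2k+1)$-manifold with $2k+1 \ge 2$ leaves it connected, so the connected components of $N$ and of $N-\{x\}$ biject.

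I do not anticipate a real obstacle here; the only subtle point is the mild bookkeeping about whether $\ocomp$ denotes the open or closed complement of $\Omega$. In either convention, the hypothesis $x \in \Int \ocomp$ guarantees that a small open ball around $x$ sits inside $N$ and is disjoint from $\bdy\Omega$, so the Mayer--Vietoris decomposition above is valid, and the argument goes through uniformly.
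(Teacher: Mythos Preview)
Your argument is correct and is essentially the same Mayer--Vietoris decomposition the paper uses: an open ball around $x$ and $N - \{x\} = \R^{2k+1} - \Omega$, with intersection homotopy equivalent to $S^{2k}$. The only cosmetic difference is that the paper works in reduced homology so that the $n=0$ case is absorbed into the same exact-sequence computation rather than treated separately.
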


\begin{proof}
Taking an open ball $D^{2k+1}$ around the point $x$, we have written $S^{2k+1}$ as a union of open sets. Then the (reduced) Mayer-Vietoris sequence yields an exact sequence
\begin{equation*}
\rightarrow H_n(S^{2k}) \rightarrow H_n(D^{2k+1}) \oplus H_n(\R^{2k+1} - \Omega) \rightarrow H_n(S^{2k+1} - \Omega) \rightarrow H_{n-1}(S^{2k}) \rightarrow
\end{equation*}
Since $D^{2k+1}$ is contractible (and we are in reduced homology), this provides the desired isomorphism immediately since the first and last homology groups in the sequence vanish. 
\end{proof}

We start with an existence theorem for a special basis for the $k$-th homology of $\bdy\Omega$:

\begin{theorem}
\label{alexander basis existence}
Let $\Omega$ be a compact domain with smooth boundary in $\R^{2k+1}$ or $S^{2k+1}$ (with $k > 0$) and $\ocomp$ be the complementary domain $\R^{2k+1} - \Omega$ or $S^{2k+1} - \Omega$. Then if we take coefficients in $\R$, $H_k(\bdy\Omega) = H_k(\Omega) \oplus H_k(\ocomp)$. Further, given any basis $\langle s_1, \dots, s_n \rangle$ for $H_k(\Omega)$ there is a corresponding basis $\langle s_1, \dots, s_n, t_1, \dots, t_n \rangle$ for $H_k(\bdy\Omega)$ which we call the \emph{Alexander basis} corresponding to $\langle s_1, \dots, s_n \rangle$ so that:
\begin{enumerate}
\item The inclusion $\bdy\Omega \hookrightarrow \Omega$ maps $\langle s_1, \dots, s_n \rangle \in H_k(\bdy\Omega)$ to the original basis $\langle s_1, \dots, s_n \rangle$ for $H_k(\Omega)$ and the inclusion $\bdy\Omega \hookrightarrow \ocomp$ maps $\langle t_1, \dots, t_n \rangle$ to a basis for $H_k(\ocomp)$.
\item 
\label{defs}
$s_i = \bdy \sigma_i$ for $\sigma_i \in H_{k+1}(\ocomp,\bdy\ocomp)$, where the $\sigma_i$ form a basis for $H_{k+1}(\ocomp,\bdy\ocomp)$. Similarly, $t_i = \bdy \tau_i$ for $\tau_i \in H_{k+1}(\Omega,\bdy\Omega)$, where the $\tau_i$ form a basis for $H_{k+1}(\Omega,\bdy\Omega)$.
\item 
\label{cup structure}
The cup product algebras of $\Omega$, $\ocomp$ and $\bdy\Omega$ obey 
\begin{equation*}
s_i^* \cup \tau_j^* = \delta_{ij} [\Omega]^*, \quad t_i^* \cup \sigma_j^* = (-1)^{k+1} \delta_{ij}[\ocomp]^*
\end{equation*} and
\begin{equation*}
s_i^* \cup s_j^* = 0, \quad t_i^* \cup s_j^* = \delta_{ij} [\bdy\Omega]^*, \quad t_i^* \cup t_j^* = 0.
\end{equation*}
\item 
\label{linking}
The linking number $\Lk(s_i,t_j) = \delta_{ij}$. (Thus $\Lk(t_j,s_i) = (-1)^{(k+1)^2} \delta_{ij}$.) 
\end{enumerate}
The Alexander duality isomorphism from $H_k(\Omega)$ to $H_k(\ocomp)$ maps $s_i$ to $t_i$.  
\end{theorem}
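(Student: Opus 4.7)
The plan is to construct the direct-sum decomposition via Mayer--Vietoris, then build the Alexander basis using Poincar\'e--Lefschetz duality on $\Omega$, and finally derive the cup-product, linking, and Alexander-duality identities from nondegeneracy of the relevant intersection pairings. By Lemma~\ref{whocares} we may assume $\Omega \subset S^{2k+1}$, and by thickening collar neighborhoods, $\Omega$ and $\ocomp$ form an open cover. The Mayer--Vietoris sequence for this cover, together with $\tilde H_j(S^{2k+1}) = 0$ for $0 < j < 2k+1$, gives an isomorphism
\begin{equation*}
\phi\co H_k(\bdy\Omega) \xrightarrow{\cong} H_k(\Omega) \oplus H_k(\ocomp), \qquad c \mapsto (i_\Omega(c),\, i_\ocomp(c)).
\end{equation*}
Alexander duality with real coefficients identifies $H_k(\ocomp) \cong H^k(\Omega) \cong H_k(\Omega)^*$, so both summands have dimension $n$.

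To construct $\tau_j$ and $t_j$, I use Poincar\'e--Lefschetz duality on $\Omega$: the intersection pairing $H_k(\Omega) \otimes H_{k+1}(\Omega,\bdy\Omega) \to \R$ is perfect. Let $\tau_1,\dots,\tau_n$ be the basis of $H_{k+1}(\Omega,\bdy\Omega)$ dual to $s_1,\dots,s_n$, so that $s_i \cdot \tau_j = \delta_{ij}$, and set $t_j := \bdy\tau_j \in H_k(\bdy\Omega)$. Then $t_j$ dies in $H_k(\Omega)$, so it lies in the second summand of $\phi$. Directly from the definition of linking, $\Lk(s_i,t_j) = \Int(s_i,\tau_j) = s_i \cdot \tau_j = \delta_{ij}$, which is item~(4). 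Since the linking pairing $H_k(\Omega) \otimes H_k(\ocomp) \to \R$ is the Alexander-duality pairing and therefore nondegenerate, the classes $i_\ocomp(t_j)$ form a basis of $H_k(\ocomp)$; together with the $s_i$ they form the Alexander basis, and the Alexander-duality isomorphism sends $s_i \mapsto t_i$ by construction. Dually, let $\sigma_i \in H_{k+1}(\ocomp,\bdy\ocomp)$ be the Poincar\'e--Lefschetz dual basis to $\{t_j\}$ on $\ocomp$; then $\bdy\sigma_i = s_i$ follows from $\Lk(t_j,\bdy\sigma_i) = t_j \cdot \sigma_i = \delta_{ij}$ matching $\Lk(t_j,s_i) = (-1)^{(k+1)^2}\delta_{ij}$ after accounting for the orientation $\bdy\ocomp = -\bdy\Omega$. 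This gives item~(2).

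The cup-product identities of item~(3) all reduce to the duality above. The relations $s_i^* \cup \tau_j^* = \delta_{ij}[\Omega]^*$ and $t_i^* \cup \sigma_j^* = (-1)^{k+1}\delta_{ij}[\ocomp]^*$ are translations of the Poincar\'e--Lefschetz intersection statements into cup products evaluated on fundamental classes, with the sign $(-1)^{k+1}$ arising from graded commutativity. For the boundary identities, evaluating $a^* \cup b^*$ against $[\bdy\Omega]$ recovers the intersection number $a \cdot b$ on the closed $2k$-manifold $\bdy\Omega$. The pairing $t_i \cdot s_j$ is computed by pushing $s_j$ off $\bdy\Omega$ into $\ocomp$ along a collar and counting intersections with any chain bounding $t_i$ in $\Omega$; this yields exactly $\Lk(s_j,t_i) = \delta_{ij}$. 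The vanishing $s_i^* \cup s_j^* = 0 = t_i^* \cup t_j^*$ is the ``half-lives-half-dies'' assertion that each summand of the Mayer--Vietoris splitting is a Lagrangian subspace for the intersection pairing on $\bdy\Omega$: representatives of two classes bounding in $\ocomp$ may be pushed into $\ocomp$ along disjoint collars, producing disjoint representatives on $\bdy\Omega$.

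The main obstacle is sign bookkeeping: the $(-1)^{k+1}$ in item~(3), the sign $(-1)^{(k+1)^2}$ in $\Lk(b,a) = (-1)^{(k+1)^2}\Lk(a,b)$, the reversed orientation $\bdy\ocomp = -\bdy\Omega$, and graded commutativity of cup products must all be tracked consistently across the different dualities. Once a coherent orientation convention is fixed, each assertion is a direct corollary of Mayer--Vietoris, Poincar\'e--Lefschetz duality, and Alexander duality.
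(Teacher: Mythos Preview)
Your proposal is correct and follows the same overall architecture as the paper: Mayer--Vietoris for the splitting, Lefschetz duality on $\Omega$ to define the $\tau_j$ (and hence $t_j = \bdy\tau_j$), then the dual construction on $\ocomp$ for the $\sigma_i$, with the linking and cup identities read off from these dualities.

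The one place where you and the paper genuinely diverge is the vanishing $s_i^* \cup s_j^* = 0$ and $t_i^* \cup t_j^* = 0$ on $\bdy\Omega$. The paper does this algebraically: it uses the boundary formula $\bdy(\alpha \cap \beta) = (-1)^l(\bdy\alpha \cap \beta - \alpha \cap \delta\beta)$ to compute $[\bdy\Omega] \cap s_i^* = (-1)^k t_i$ directly, whence $(s_i^* \cup s_j^*)[\bdy\Omega] = (-1)^k s_j^*(t_i) = 0$ because $s_j^*$ and $t_i$ live in complementary summands. Your argument instead invokes the Lagrangian picture, pushing representatives off along disjoint collars. Your version is more geometric and conceptually appealing, but as written it is a bit loose: pushing into $\ocomp$ along collars produces disjoint cycles in a collar neighborhood, which computes an ambient linking number rather than the intrinsic intersection number on $\bdy\Omega$; one still needs the identification of the two (which is exactly what the paper's cap-product computation supplies). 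The paper's explicit formula also yields the crucial identity $[\bdy\Omega] \cap s_i^* = (-1)^k t_i$, which then drives all the remaining sign bookkeeping (including the $(-1)^{k+1}$ in the definition of $\sigma_i$) in a uniform way. Either route works, but the algebraic one keeps the signs under tighter control, which---as you yourself note---is the main hazard here.
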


We will then study the effect of a homeomorphism on the Alexander basis, proving

\begin{theorem}
\label{alexander basis transformations}
Suppose that $\Omega$ and $\Omega'$ are compact domains with smooth boundary in $\R^{2k+1}$ or $S^{2k+1}$ and that $f\co \Omega \rightarrow \Omega'$ is an orientation-preserving homeomorphism. Then if $\langle s_1, \dots, s_n \rangle$ is a basis for $H_k(\Omega)$ and $\langle s_1', \dots, s_n' \rangle$ is a corresponding basis for $H_k(\Omega')$ so that $f_*(s_i) = s_i'$, then we may build Alexander bases $\langle s_1,\dots,s_n,t_1,\dots,t_n \rangle$ for $H_k(\bdy\Omega)$ and $\langle s_1',\dots,s_n',t_1',\dots,t_n' \rangle$ for $H_k(\bdy\Omega')$. For these bases, we have
$f_*(\tau_i) = \tau_i'$ and $\bdy f_*(t_i) = t_i'$ so that the map $\bdy f_*\co H_k(\bdy\Omega) \rightarrow H_k(\bdy\Omega')$ can be written as the $2n \times 2n$ matrix
\renewcommand{\arraystretch}{1.5}
\begin{equation}
\label{mmatrix}
M = \left[ 
	\begin{array}{c|c}
	I & 0 \\
	\hline 
	(c_{ij}) & I
	\end{array}
	\right],
\end{equation}
where each block represents an $(n \times n)$ matrix. If $k$ is odd, the block matrix $c_{ij}$ is symmetric, while if $k$ is even, the block matrix $c_{ij}$ is skew-symmetric.
\end{theorem}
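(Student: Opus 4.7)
The plan is to establish the three parts of the statement in order: first $f_*(\tau_i) = \tau_i'$, then the shape of the matrix of $\bdy f_*$ (which in particular includes $\bdy f_*(t_i) = t_i'$), and finally the (skew-)symmetry of $(c_{ij})$, which is the heart of the result. All three parts will be derived from naturality of Poincar\'e--Lefschetz duality and naturality of cup product under the orientation-preserving homeomorphism~$f$, combined with the cup product relations in part (3) of \thm{alexander basis existence}.

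For $f_*(\tau_i) = \tau_i'$, the idea is to characterize $\tau_j^*$ via its cup products. Since $f$ is orientation-preserving, $f^*([\Omega']^*) = [\Omega]^*$, and $f_*(s_i) = s_i'$ dualizes to $f^*(s_i'^*) = s_i^*$. Applying $f^*$ to the relation $s_i'^* \cup \tau_j'^* = \delta_{ij}[\Omega']^*$ yields $s_i^* \cup f^*(\tau_j'^*) = \delta_{ij}[\Omega]^*$. The Poincar\'e--Lefschetz pairing $H^k(\Omega) \otimes H^{k+1}(\Omega,\bdy\Omega) \to \R$ is nondegenerate, so $\tau_j^*$ is the unique class whose cup products with the $s_i^*$ produce $\delta_{ij}[\Omega]^*$; hence $f^*(\tau_j'^*) = \tau_j^*$, equivalently $f_*(\tau_j) = \tau_j'$.

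For the matrix shape, I would work one block column at a time. Writing $\bdy f_*(s_i) = \sum_j a_{ji} s_j' + \sum_j c_{ji} t_j'$ and applying the inclusion $\iota'_*\co H_k(\bdy\Omega') \to H_k(\Omega')$ to the commuting square $\iota'_* \circ \bdy f_* = f_* \circ \iota_*$, we use that $t_j' = \bdy\tau_j'$ dies in $H_k(\Omega')$. The identity reduces to $\sum_j a_{ji} s_j' = f_*(s_i) = s_i'$, forcing $a_{ji} = \delta_{ij}$, which fills in the upper block column of $M$. For the lower block column, naturality of the connecting homomorphism together with the already-established $f_*(\tau_i) = \tau_i'$ gives $\bdy f_*(t_i) = \bdy f_*(\bdy\tau_i) = \bdy(f_*\tau_i) = \bdy\tau_i' = t_i'$, completing the shape~\eqref{mmatrix}.

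For the parity of $(c_{ij})$, dualizing the established matrix yields $(\bdy f)^*(s_j'^*) = s_j^*$ and $(\bdy f)^*(t_j'^*) = t_j^* + \sum_l c_{jl} s_l^*$, as one checks by evaluating the Kronecker pairing on $s_i, t_i$. Since the boundary restriction of an orientation-preserving homeomorphism is orientation-preserving on $\bdy\Omega$, we have $(\bdy f)^*[\bdy\Omega']^* = [\bdy\Omega]^*$. Applying $(\bdy f)^*$ to the relation $t_i'^* \cup t_j'^* = 0$ and using naturality of cup product gives
\begin{equation*}
0 = \bigl(t_i^* + \sum_l c_{il} s_l^*\bigr) \cup \bigl(t_j^* + \sum_m c_{jm} s_m^*\bigr).
\end{equation*}
Expanding via $s^* \cup s^* = 0 = t^* \cup t^*$ and $t_i^* \cup s_m^* = \delta_{im}[\bdy\Omega]^*$, combined with the graded-commutativity $s_l^* \cup t_j^* = (-1)^k t_j^* \cup s_l^* = (-1)^k \delta_{jl}[\bdy\Omega]^*$, the right hand side collapses to $\bigl(c_{ji} + (-1)^k c_{ij}\bigr)[\bdy\Omega]^*$. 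Hence $c_{ji} = (-1)^{k+1} c_{ij}$: symmetric when $k$ is odd and skew-symmetric when $k$ is even. The main obstacle is the sign bookkeeping in this last computation, and the small point that one must verify the uniqueness of Poincar\'e--Lefschetz duals used in step~(i) and the orientation-preserving behavior of $\bdy f$; both follow from standard facts but must be applied precisely to keep the signs straight.
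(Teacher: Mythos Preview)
Your proof is correct and follows essentially the same route as the paper: both use naturality of cup product under $f$ together with the cup structure of Theorem~\ref{alexander basis existence} to pin down $f_*(\tau_i)=\tau_i'$, deduce the block form of $M$, and extract the (skew-)symmetry of $(c_{ij})$. The only noteworthy difference is in the last step: the paper obtains the symmetry relation by expanding the vanishing product $s_i'^* \cup s_j'^*$ on $\bdy\Omega'$ after pushing the classes forward, whereas you pull back the vanishing product $t_i'^* \cup t_j'^*$ via $(\bdy f)^*$ and expand on $\bdy\Omega$; these are symmetric variants of the same computation, and your dualization $(\bdy f)^*(t_j'^*)=t_j^*+\sum_l c_{jl}s_l^*$ is carried out more carefully than the paper's corresponding step.
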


An example of these theorems was shown in Figure~\ref{fig:alexanderbasis}. 
We now restate some of our main tools for this theorem. The first is a form of Poincar\'{e} duality~\cite[Theorem 3.4.3, p.254]{hatcher}:

\begin{theorem}[Lefschetz Duality]
\label{lefdual}
Suppose $M$ is a compact orientable $n$-manifold with boundary $\bdy M$. Then cap product with a top class $[M] \in H^n(M, \bdy M)$ gives isomorphisms $D_M \co H^i(M) \rightarrow H_{n-i}(M, \bdy M)$ for all $k$. 
\end{theorem}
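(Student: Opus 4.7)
The plan is to prove Lefschetz duality by a Mayer--Vietoris induction over an atlas of smooth charts on $M$, paralleling the standard proof of Poincar\'{e} duality for closed manifolds but modified to track the boundary. The central tool is naturality of the cap product: for an open inclusion $j \co U \hookrightarrow M$, the identity $j_*(j^*\phi \frown \alpha) = \phi \frown j_*\alpha$ together with the fact that the restriction of a fundamental class to a sufficiently small open set is again a fundamental class there allows one to set up commutative diagrams comparing cap product with $[M]$ and with $[U]$.

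For the base cases I would consider two local models. First, an interior chart diffeomorphic to $\R^n$ disjoint from $\bdy M$: here $H^i(\R^n)$ vanishes for $i > 0$, while $H_{n-i}(\R^n, \R^n \setm \{0\}) \isom \R$ in degree $n$ and is zero elsewhere, and a direct computation shows that cap product with the local fundamental class is an isomorphism. Second, a boundary chart diffeomorphic to $\R^{n-1} \times [0, \infty)$ with boundary $\R^{n-1} \times \{0\}$: the chart deformation retracts onto its boundary, so its cohomology is concentrated in degree $0$, while the relative homology $H_{n-i}(\R^{n-1} \times [0, \infty), \R^{n-1} \times \{0\})$ is concentrated in degree $n$ and is computed by the local fundamental class. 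Cap product is an isomorphism in both model cases.

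For the inductive step, I would write $M = U \cup V$ as a union of open submanifolds built by thickening partial unions of chart domains, and form the Mayer--Vietoris sequence in cohomology for the pair $U, V$ in $M$ together with the Mayer--Vietoris sequence in homology for the pairs $(U, U \cap \bdy M)$, $(V, V \cap \bdy M)$, $(U \cap V, U \cap V \cap \bdy M)$, and $(M, \bdy M)$. Cap product with the respective fundamental classes yields a ladder connecting these two long exact sequences. By the inductive hypothesis, the cap product maps are isomorphisms for $U$, $V$, and $U \cap V$, so the five lemma forces the cap product map for $M$ to be an isomorphism as well. Compactness of $M$ ensures that the induction terminates after finitely many steps.

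The main obstacle is the bookkeeping in the ladder: one must verify that the connecting homomorphism of the Mayer--Vietoris sequence in cohomology commutes, up to a sign dictated by orientation conventions, with the connecting homomorphism in the relative homology sequence, when both are joined by cap product with the fundamental class. This is a classical but delicate diagram chase that hinges on expressing the Mayer--Vietoris connecting map in terms of a choice of partition of unity and on the compatibility of fundamental classes under restriction to open submanifolds. Once this commutativity of the ladder is in place, the inductive argument runs cleanly and the theorem follows.
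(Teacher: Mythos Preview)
The paper does not prove Lefschetz duality at all; it merely quotes the statement as a standard tool and gives a reference to Hatcher~\cite[Theorem~3.4.3, p.~254]{hatcher}. So there is no ``paper's own proof'' to compare against, and your sketch is not meant to be reproduced here.

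That said, your outline is essentially the textbook Mayer--Vietoris/five-lemma argument one finds in Hatcher. One point you should tighten: the open sets $U$, $V$, $U\cap V$ appearing in your induction are not compact, so the theorem as stated (for compact $M$) does not directly apply to them, and your inductive hypothesis as written is not available. The standard fix is to prove a stronger statement valid for arbitrary open subsets, using cohomology with compact supports (or a direct limit over compact pieces) in place of ordinary cohomology; compactness of $M$ then lets you recover the statement you want at the end. Without this adjustment the induction does not close up.
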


We now begin the proof. We are essentially reproving the Alexander duality theorem while recording additional information along the way.
\begin{proof}[Proof of Theorem~\ref{alexander basis existence}]
Let us restrict our attention to the case where $\Omega \subset S^{2k+1}$ for convenience. (Lemma~\ref{whocares} shows that the same proof works in both cases.)
We observe that since $\bdy\Omega$ is compact and smooth, it has a open tubular neighborhood which deformation retracts to $\bdy\Omega$. Using this, the reduced Mayer-Vietoris sequence \cite[p.150]{hatcher} yields a long exact sequence including
\begin{equation*}
\label{mvseq}
H_{k+1}(S^{2k+1}) = 0 \rightarrow H_k(\bdy\Omega) \xrightarrow{\Phi} H_k(\Omega) \oplus H_k(\ocomp) \xrightarrow{\Psi} H_{k}(S^{2k+1}) = 0.
\end{equation*}
This proves that the map $\Phi$ given by the inclusions of $\bdy\Omega$ into $\Omega$ and $\ocomp$ is an isomorphism between $H_k(\bdy\Omega)$ and $H_k(\Omega) \oplus H_k(\ocomp)$, proving the first statement in our theorem\footnote{In fact, a version of this observation actually predates homology theory itself! The statement that (essentially) the dimension of $H_1(\bdy\Omega)$ was equal to the sum of the dimensions of $H_1(\Omega)$ and $H_1(\ocomp)$ was published by James Clerk Maxwell in 1891 in his \emph{Treatise on Electricity and Magnetism}~\cite{maxwell}. Steenrod gives the result in a form essentially the same as ours as a theorem of Hopf~\cite[Theorem 2.1]{MR0145525}, while Kauffman proves that for any three manifold with boundary, half of the homology of the boundary is in the kernel of the inclusion of the boundary into the interior~\cite[Lemma 8.1]{MR907872}.}

Using this isomorphism, we see that our original basis $s_1, \dots, s_n \in H_k(\Omega)$ is the $\Phi$-image of a linearly independent set of classes $s_1, \dots, s_n \in H_k(\bdy\Omega)$ which vanish when included in $H_k(\ocomp)$. By Lefschetz Duality (\ref{lefdual}), 
\begin{equation*}
[\Omega] \cap \co H^k(\Omega) \rightarrow H_{k+1}(\Omega,\bdy\Omega), 
\end{equation*}
is an isomorphism. Let $\tau_1, \dots, \tau_n$ denote the images of the cohomology duals of $s_1, \dots, s_n$ under this isomorphism. By construction, $[\Omega] \cap s_i^* = \tau_i$, or $\tau_j^* ( [\Omega] \cap s_i^*) = \delta_{ij}$. Now for $\alpha \in H_{k+l}(X,\bdy X)$, $\phi \in H^k(X)$ and $\psi \in H^{l}(X,\bdy X)$, the cup and cap product are related by the formula~\cite[p.249]{hatcher},
\begin{equation}
\label{capandcup}
\psi( \alpha \cap \phi ) = (\phi \cup \psi) (\alpha).
\end{equation}
Taking $\psi = \tau_j^*$, $\alpha = [\Omega]$, and $\phi = s_i^*$, we get 
\begin{equation*}
1 = \tau_j^* ( [\Omega] \cap s_i^*) = (s_i^* \cup \tau_j^*) ([\Omega]).
\end{equation*}
This proves our statement about the cup structure of $\Omega$.

We now map the $\tau_i$ to classes in $H_k(\ocomp)$. If we take a small open neighborhood $\bdy\Omega_o$ of $\bdy\Omega$ which deformation retracts to $\bdy\Omega$, we can construct $\Omega_o = \Omega \cup \bdy\Omega_o$ so that the closure of $\ocomp_o = S^{2k+1} - \Omega_o$ is contained in the interior of $\ocomp$. Clearly $\Omega_o = S^{2k+1} - \ocomp_o$ and 
\begin{equation*}
\ocomp - \ocomp_o = (S^{2k+1} - \Omega) - (S^{2k+1} - (\Omega \cup \bdy\Omega_o)) = \bdy\Omega_o.
\end{equation*}
This means that the pair $(\Omega_o,\bdy\Omega_o) = (S^{2k+1} - \ocomp_o,\ocomp - \ocomp_o)$. Further, since the closure of $\ocomp_o$ is contained in the interior of $\ocomp$, the inclusion of $(S^{2k+1} - \ocomp_o,\ocomp - \ocomp_o) \hookrightarrow (S^{2k+1},\ocomp)$ induces the homology isomorphism \begin{equation*}
H_{k+1}(\Omega,\bdy\Omega) = H_{k+1}(\Omega_o,\bdy\Omega_o) =  H_{k+1}(S^{2k+1},\ocomp).
\end{equation*}
by deformation retraction of $(\Omega_o,\bdy\Omega_o)$ onto $(\Omega,\bdy\Omega)$ and excision~\cite[Theorem 2.20, p.119]{hatcher}. But the exact sequence of the pair $(S^{2k+1},\ocomp)$ contains
\begin{equation*}
H_{k+1}(S^{2k+1}) = 0 \rightarrow H_{k+1}(S^{2k+1},\ocomp) \xrightarrow{\bdy} H_k(\ocomp) \rightarrow H_k(S^{2k+1}) = 0
\end{equation*}
So the boundary map carries the $\tau_i$ to a set of generators $t_i$ for $H_k(\ocomp)$. The entire map we have built from $s_i$ (as a basis for $H_k(\Omega)$) to $t_i$ (as a basis for $H_k(\ocomp)$) is the Alexander duality isomorphism.

Since we can pull these $t_i$ back to $H_k(\bdy\Omega)$ under the isomorphism $\Phi\co H_k(\bdy\Omega) \rightarrow H_k(\Omega) \oplus H_k(\ocomp)$, we can regard the $t_i$ as a linearly independent set of elements in $H_k(\bdy\Omega)$ which complete the Alexander basis $\langle s_1, \dots, s_n, t_1, \dots, t_n \rangle$ for $H_k(\bdy\Omega)$. In fact, we can choose representatives for the $t_i$ so that $t_i = \bdy\tau_i$.  We now observe that $\Lk(s_i,t_j) = \Int(s_i,\tau_j) = (s_i^* \cup \tau_j^*)[\Omega] = \delta_{ij}$.

We now work out the cup product of $s_i^*$ and $t_j^*$. The relation between the cap product and the boundary operator for an $i$-chain $\alpha$ in $C_i(X,A)$ and a cochain $\beta \in C^l(X)$ is given by
\begin{equation*}
\label{capandbdy}
\bdy (\alpha \cap \beta) = (-1)^l (\bdy \alpha \cap \beta - \alpha \cap \delta \beta)
\end{equation*}
where $\delta$ is the coboundary operator \cite[p.240]{hatcher}.  We now compute
\begin{equation}
\label{ocapsi}
t_i = \bdy \tau_i = \bdy ( [\Omega] \cap s_i^* ) = (-1)^k (\bdy [\Omega] \cap s_i^* - [\Omega] \cap \delta s_i^*) = (-1)^k ([\bdy\Omega] \cap s_i^*),
\end{equation}
where $\delta s_i^* = 0$ because $s_i^*$ is a cocycle. We can compute
\begin{equation}
\label{sicupti}
\delta_{ij} = t_j^*(t_i) = (-1)^k t_j^* ([\bdy \Omega] \cap s_i^*) = (-1)^k (s_i^* \cup t_j^*) [\bdy\Omega].
\end{equation}
Now we recall that the cup product of $\alpha \in H^i(X)$ and $\beta \in H^j(X)$ obeys the (anti)commutativity relation \cite[Theorem 3.14, p.215]{hatcher}:
\begin{equation*}
\label{cupcommutes}
\alpha \cup \beta = (-1)^{ij} \beta \cup \alpha.
\end{equation*}
Using this equation, we see immediately that~\eqref{sicupti} yields $(t_j^* \cup s_i^*)[\bdy\Omega] = \delta_{ij}$ whether $k$ is even or odd. Thus we have
\begin{equation*}
\delta_{ij} = (t_j^* \cup s_i^*)[\bdy \Omega] = s_i^*([\bdy\Omega] \cap t_j^*),
\end{equation*}
and $[\bdy\Omega] \cap t_i^* = s_i + d_l t_l$. (We will shortly argue that the $d_l$ are all zero.)

As above, we know $[\ocomp] \cap \co H^k(\ocomp) \rightarrow H_{k+1}(\ocomp,\bdy\ocomp)$ is an isomorphism. We let $\sigma_i = (-1)^{k+1} [\ocomp] \cap t_i^*$. We now show $\bdy\sigma_i \in H_k(\bdy\Omega) = s_i$. We notice first that $\bdy\sigma_i$ is a linear combination of $s_j$ since it is certainly the case that $\bdy\sigma_i$ bounds in $\ocomp$, so $\bdy\sigma_i$ must be contained in the $H_k(\Omega)$ summand of $H_k(\bdy\Omega) = H_k(\Omega) \oplus H_k(\ocomp)$.

Now we compute
\begin{equation}
\label{ocapti}
\begin{split}
\bdy\sigma_i &= \bdy ((-1)^{k+1} [\ocomp] \cap t_i^*) = (-1)^{2k+1} (\bdy[\ocomp] \cap t_i^* - [\ocomp] \cap \delta t_i^*) \\
&= (-1)^{2k+1}(-[\bdy\Omega] \cap t_i^*) = [\bdy\Omega] \cap t_i^*.
\end{split}
\end{equation}
We already know that $[\bdy\Omega] \cap t_i^* = s_i + d_l t_l$. Since $\bdy\sigma_i$ is a linear combination of $s_j$, we have shown that $\bdy\sigma_i = s_i$, as desired.

Notice that we needed the $(-1)^{k+1}$ in our definition of $\sigma_i = (-1)^{k+1}[\ocomp] \cap t_i^*$ to make the signs come out correctly in this last sequence of arguments. To verify this sign, observe that we can directly compute
\begin{align*}
\Lk(t_j,s_i) &= \Int(t_j,\sigma_i) = (t_j^* \cup \sigma_i^*)([\ocomp]) \\ 
 &= \sigma_i^* ([\ocomp] \cap t_j^*) = (-1)^{k+1} \sigma_i^*(\sigma_j) = (-1)^{k+1} \delta_{ij},
\end{align*}
which agrees with our previous computation 
\begin{equation*}
\Lk(t_j,s_i) = (-1)^{(k+1)^2} \Lk(s_i,t_j) = (-1)^{(k+1)^2} \delta_{ij}.
\end{equation*}
Now consider the cup product $s_i^* \cup s_j^*$. Using \eqref{capandcup} and \eqref{ocapsi}, we have
\begin{equation*}
(s_i^* \cup s_j^*)[\bdy\Omega] = s_j^* ([\bdy\Omega] \cap s_i^*) = (-1)^k s_j^* (t_i) = 0.
\end{equation*}
Similarly, using~\eqref{capandcup} and~\eqref{ocapti}, we have
\begin{equation*}
(t_i^* \cup t_j^*)[\bdy\Omega] = t_j^* ([\bdy\Omega] \cap t_i^*) = t_j^* (s_i) = 0.  \qedhere
\end{equation*}
\end{proof}

We now prove our second theorem about the Alexander basis.

\newcommand{\sip}{s_i'}
\newcommand{\tjp}{t_j'}
\newcommand{\sips}{s_i'^*}
\newcommand{\tjps}{t_j'^*}
\newcommand{\taujps}{(\tau_j')^*}

\begin{proof}[Proof of Theorem~\ref{alexander basis transformations}]
Since $f$ is an orientation-preserving homeomorphism, we have $f_*[\bdy\Omega] = [\bdy\Omega']$. We also know that in $H_k(\Omega')$ and $H_k(\Omega)$, $f^*(\sips) = s_i^*$ since we have $f_*(s_i) = \sip$.  This means that in $H_k(\bdy\Omega)$, $\bdy f_*(s_i) = \sip + c_{ji} t_j'$ for some coefficients $c_{ji}$, since the $t_j'$ span the kernel of the inclusion homomorphism from $H_k(\bdy\Omega')$ to $H_k(\Omega')$.

We now compute $f^*(\tau_j'^*) \in H^{k+1}(\Omega,\bdy\Omega)$. We know
\begin{equation*}
f^*(\sips \cup \taujps) = f^*(\sips) \cup f^*(\taujps) = s_i^* \cup f^*(\taujps).
\end{equation*}
On the other hand by conclusion~(\ref{cup structure}) of 
Theorem~\ref{alexander basis existence}, we know
\begin{equation*}
f^*(\sips \cup \taujps) = f^*(\delta_{ij} [\Omega']^*) = \delta_{ij} [\Omega]^*.
\end{equation*}
Thus, using our construction of $\tau_i = [\Omega] \cap s_i$, 
\begin{equation*}
\delta_{ij} = (s_i^* \cup f^*(\taujps))[\Omega] = f^*(\taujps) ([\Omega] \cap s_i^*) = f^*(\taujps)(\tau_i).
\end{equation*}
and we may conclude that $f^*(\taujps) = \tau_j^*$. Hence $f_*(\tau_j) = \tau_j'$, and $f_*(t_i) = t_i'$, since $t_i = \bdy\tau_i$. 

This proves that the map $\bdy f_*\co H_k(\bdy\Omega) \rightarrow H_k(\bdy\Omega')$ can be written in the matrix form above. Since $\bdy f_*(s_i) = s_i' + c_{ji} t_j'$, it follows that $s_i^* = \bdy f^*(s_i'^* + c_{ji} t_j'^*)$. Of course, using conclusion~(\ref{cup structure}) of Theorem~\ref{alexander basis existence} again, we know 
\begin{equation*}
0 = s_i^* \cup s_j^* = \bdy f^*(s_i'^* + c_{ki} t_k'^*) \cup \bdy f^*(s_j'^* + c_{jl} t_l'^*).
\end{equation*}
In addition, we know $\bdy f^*$ is an isomorphism and $t_i^* \cup s_j^* = \delta_{ij} [\bdy\Omega']^*$ so
\begin{align*}
0 &= (\sips + c_{ik} t_k'^*) \cup (s_j'^* + c_{jl} t_l'^*) = c_{jl} s_i'^* \cup t_l'^* + c_{ik} t_k'^* \cup s_j'^* \\
  &= (c_{jl} (-1)^{k^2} \delta_{il} + c_{ik} \delta_{kj}) [\bdy\Omega']^* = ((-1)^{k^2} c_{ji} + c_{ij}) [\bdy\Omega']^*.
\end{align*}
If $k$ is even, this equation becomes $c_{ji} + c_{ij} = 0$ and the matrix is skew-symmetric, while if $k$ is odd, this equation becomes $-c_{ji} + c_{ij} = 0$ and the matrix is symmetric, as claimed.
\end{proof}

It is tempting to wonder what happens if one computes the cup products $s_i^* \cup t_j^* = \bdy f^*(s_i'^* + c_{ki} t_k'^*) \cup \bdy f^*(t_j')$ or $t_i^* \cup t_j^* = \bdy f^*(t_i'^*) \cup \bdy f^*(t_j'^*)$. It is certainly possible to do so, yielding expansions similar to those above, but it turns out to be the case that this procedure yields no additional information about the $c_{ij}$. Thus we believe that Theorems~\ref{alexander basis existence} and~\ref{alexander basis transformations} summarize all of the cohomological information available for an arbitrary compact domain with boundary in $\R^{2k+1}$.

\section{The Hodge-Morrey-Friedrichs Decomposition for Manifolds with Boundary} 
\label{hodge}  

The purpose of this section is to explain why every closed Dirichlet form on a domain $\Omega$ in $\R^{2k+1}$ is exact using a decomposition of the differential forms on $\Omega$, which for now we view as a smooth, compact, Riemannian $n$-manifold with non-empty boundary.  As with the previous appendices, this section is expository; see Schwarz \cite{MR1367287} for full details.  Non-experts may prefer the well-written treatment of this subject in Chapter 2 of Clayton Shonkwiler's thesis~\cite{shonk}.  

\newcommand{\exact}[1]{\mathcal{E}^{#1}(\Omega)}
\newcommand{\coexact}[1]{c\mathcal{E}^{#1}(\Omega)}
\newcommand{\harmonic}[1]{\mathcal{H}^{#1}(\Omega)}
\newcommand{\harmonicf}[1]{\widehat{\mathcal{H}}^{#1}(\Omega)}

\newcommand{\exactharmonic}[1]{\mathcal{EH}^{#1}(\Omega)}
\newcommand{\coexactharmonic}[1]{c\mathcal{EH}^{#1}(\Omega)}

\newcommand{\forms}[1]{\Lambda^{#1}(\Omega)}

\newcommand{\nexact}[1]{\mathcal{E}^{#1}_N(\Omega)}
\newcommand{\ncoexact}[1]{c\mathcal{E}^{#1}_N(\Omega)}
\newcommand{\nharmonic}[1]{\mathcal{H}^{#1}_N(\Omega)}

\newcommand{\dexact}[1]{\mathcal{E}^{#1}_D(\Omega)}
\newcommand{\dcoexact}[1]{c\mathcal{E}^{#1}_D(\Omega)}
\newcommand{\dharmonic}[1]{\mathcal{H}^{#1}_D(\Omega)}

\newcommand{\exactcoexact}[1]{\mathcal{E}c\mathcal{E}^{#1}(\Omega)}

We let $\forms{p}$ denote the vector space of smooth $p$-forms on $\Omega$. The exterior derivative $d$ and the codifferential $\delta$ map to $\forms{p+1}$ and $\forms{p-1}$, respectively.  The Laplacian on $p$-forms is defined as $\Delta = d\delta + \delta d$.  We can define natural subspaces of $\forms{p}$ by looking at the kernel and image of $d$ and $\delta$:

\begin{definition}
The kernel of $d$ is the space of \emph{closed} forms. The kernel of $\delta$ is the space of \emph{co-closed} forms. The intersection of these spaces is the space of \emph{harmonic $p$-fields} $\harmonic{p}$.  These form a subset of the \emph{harmonic $p$-forms} $\harmonicf{p}$, defined as the kernel of $\Delta$.

We use the following notation for the image of $d$ and $\delta$:
\begin{itemize}
\item 
\emph{exact} $p$-forms: $\exact{p} \subset \forms{p}$ is the image of $d \co \forms{p-1} \rightarrow \forms{p}$.
\item 
\emph{co-exact} $p$-forms: $\coexact{p} \subset \forms{p}$ is the image of $\delta \co \forms{p+1} \rightarrow \forms{p}$.
\end{itemize}

We can take the intersections of appropriate subspaces to yield the exact harmonic $p$-fields $\exactharmonic{p}$ and the co-exact harmonic $p$-fields $\coexactharmonic{p}$.
\end{definition}

Since the boundary of $\Omega$ is nonempty, we can also classify forms into subspaces by their behavior on the boundary. 

\begin{definition}
A $p$-form $\omega \in \forms{p}$ obeys \emph{Dirichlet boundary conditions} if $\omega$ vanishes when restricted to $\bdy\Omega$, i.e., if $V_1, \ldots, V_p$ all lie in $T_x{\bdy\Omega}$, then $\omega(V_1, \ldots, V_p)=0$.  We say $\omega$ obeys \emph{Neumann boundary conditions} when the Hodge dual $\star \omega$ of $\omega$ obeys Dirichlet boundary conditions.
\end{definition}

We apply boundary conditions to the exact and co-exact forms and harmonic fields  by attaching a subscript $D$ or $N$ to the notation above. For the harmonic fields, $\dharmonic{p}$ and $\nharmonic{p}$ are just the intersections of harmonic fields with Dirichlet and Neumann forms. For the exact and co-exact forms, the boundary condition applies also to the \emph{primitives} of the forms in $\nexact{p}$, $\ncoexact{p}$, $\dexact{p}$, and $\dcoexact{p}$. 

We can now give

\begin{theorem}[Hodge-Morrey-Friedrichs Decomposition Theorem~\cite{MR1367287}]
\label{hmf}
Let $\Omega$ be a compact, oriented, smooth Riemannian manifold with non-empty boundary $\bdy \Omega$. Then $\forms{p}$ admits the $L^2$ orthogonal decompositions 
\begin{align}
\forms{p} &= \ncoexact{p} \oplus \; \nharmonic{p} \; \oplus \exactharmonic{p} \oplus \dexact{p} \label{hmf1}\\
\forms{p} &= \ncoexact{p} \oplus \coexactharmonic{p} \oplus \dharmonic{p} \, \oplus \dexact{p}. \label{hmf2}
\end{align}
Further, $\nharmonic{p} \simeq H^p(\Omega;\R)$ while $\dharmonic{p} \simeq H^p(\Omega,\bdy\Omega; \R)$. 
\end{theorem}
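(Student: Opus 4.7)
My plan is to establish the theorem in three stages: (i) verify $L^2$-orthogonality of the summands via Green's formula; (ii) produce the decomposition by reducing to existence for an elliptic boundary value problem (the Hodge--Morrey decomposition), followed by a Friedrichs-style refinement of the harmonic fields; (iii) identify $\nharmonic{p}$ and $\dharmonic{p}$ with the named cohomology groups by exhibiting unique harmonic-field representatives.

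The orthogonality statements all flow from the two forms of Green's formula,
\begin{equation*}
\langle d\mu, \nu \rangle \,=\, \langle \mu, \delta\nu \rangle + \int_{\bdy\Omega}\mu\wedge\star\nu,\qquad \langle \delta\nu, \mu\rangle \,=\, \langle d\mu, \nu\rangle - \int_{\bdy\Omega}\mu\wedge\star\nu,
\end{equation*}
whose boundary integral vanishes whenever $\mu$ carries a Dirichlet condition or $\nu$ carries a Neumann condition. Every pairing of two distinct summands in \eqref{hmf1} or \eqref{hmf2} then produces a bulk term killed by $d^2 = 0$, $\delta^2 = 0$, or a harmonicity hypothesis, together with a boundary integral killed by the appropriate boundary condition on the primitive involved.

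Next, the existence of the Hodge--Morrey decomposition
\begin{equation*}
\forms{p} \;=\; \dexact{p}\,\oplus\,\ncoexact{p}\,\oplus\,\harmonic{p}
\end{equation*}
is the analytic heart of the theorem and, I expect, the main obstacle. I would follow Morrey's classical approach as presented in Schwarz~\cite{MR1367287}: pass to $L^2$-completions, and for $\omega \in \forms{p}$ solve an elliptic mixed boundary value problem for $\Delta\psi = \omega - P_{\harmonic{p}}\omega$ whose boundary conditions force $d\delta\psi$ to be Dirichlet-exact and $\delta d\psi$ to be Neumann-coexact. A G\r{a}rding inequality for the Laplacian on forms with these boundary conditions, combined with the Fredholm alternative, yields solvability modulo a finite-dimensional kernel, which is exactly $\harmonic{p}$. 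Applying the same elliptic machinery inside the finite-dimensional space $\harmonic{p}$ produces the Friedrichs refinements $\harmonic{p} = \nharmonic{p} \oplus \exactharmonic{p}$ and $\harmonic{p} = \dharmonic{p} \oplus \coexactharmonic{p}$; substitution into the Hodge--Morrey decomposition gives both \eqref{hmf1} and \eqref{hmf2}. I would cite rather than reproduce the PDE estimates.

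Finally, for the cohomology identifications, I would argue that every closed $p$-form $\omega$ admits, by \eqref{hmf1}, a decomposition $\omega = \delta\beta + \eta + d\mu + d\nu$ with $\beta$ Neumann, $\eta \in \nharmonic{p}$, $d\mu \in \exactharmonic{p}$, and $d\nu \in \dexact{p}$. Applying $d$ and using $d\omega = 0$ forces $d\delta\beta = 0$; pairing with $\beta$ yields
\begin{equation*}
0 \;=\; \langle d\delta\beta,\beta\rangle \;=\; \|\delta\beta\|^2 + \int_{\bdy\Omega}\delta\beta\wedge\star\beta \;=\; \|\delta\beta\|^2,
\end{equation*}
since the boundary integral vanishes by the Neumann condition on $\beta$. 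Hence $\omega$ is cohomologous to $\eta \in \nharmonic{p}$. Uniqueness follows because any $\omega = d\gamma$ lying in $\nharmonic{p}$ satisfies $\|\omega\|^2 = \langle d\gamma,\omega\rangle = \langle\gamma,\delta\omega\rangle + \int_{\bdy\Omega}\gamma\wedge\star\omega = 0$, both terms vanishing because $\omega$ is a Neumann harmonic field. This exhibits the isomorphism $\nharmonic{p} \simeq H^p(\Omega;\R)$. The companion statement $\dharmonic{p} \simeq H^p(\Omega,\bdy\Omega;\R)$ follows either by the analogous argument applied to the relative de Rham complex represented by Dirichlet forms, or by applying the Hodge star, which interchanges Dirichlet and Neumann boundary conditions and intertwines $d$ with $\pm\delta$, combined with Poincar\'e--Lefschetz duality.
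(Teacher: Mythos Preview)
The paper does not actually prove this theorem. It is stated in Appendix~\ref{hodge} with a citation to Schwarz~\cite{MR1367287}, and the appendix explicitly declares itself expository (``see Schwarz~\cite{MR1367287} for full details''). So there is no in-paper argument to compare against; your sketch supplies substantially more than the paper does, and its overall architecture---Green's formula for orthogonality, the Morrey elliptic boundary value problem for existence, the Friedrichs splittings of $\harmonic{p}$, and the harmonic-representative argument for the cohomology identifications---is the standard route and matches what one finds in Schwarz.

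One genuine slip to flag: you describe $\harmonic{p}$ as finite-dimensional (``solvability modulo a finite-dimensional kernel, which is exactly $\harmonic{p}$'' and ``elliptic machinery inside the finite-dimensional space $\harmonic{p}$''). On a manifold with non-empty boundary the space of harmonic fields with \emph{no} boundary condition is infinite-dimensional; it is only after imposing Dirichlet or Neumann conditions that one gets the finite-dimensional spaces $\dharmonic{p}$ and $\nharmonic{p}$. The Friedrichs decompositions $\harmonic{p} = \nharmonic{p} \oplus \exactharmonic{p}$ and $\harmonic{p} = \dharmonic{p} \oplus \coexactharmonic{p}$ are therefore not obtained by Fredholm theory inside a finite-dimensional space; they come from a separate argument (e.g., solving a Dirichlet or Neumann potential problem for a given harmonic field). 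This does not derail your outline, but the phrasing misstates where the finite-dimensionality enters.
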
 

Using the first decomposition, we now make an observation about closed Dirichlet forms on $\Omega$.

\begin{proposition}
\label{alphaisexact}
Let $0<p < 2k+1$.  The space of closed Dirichlet $p$-forms is $\dharmonic{p} \oplus \dexact{p}$.  If $\Omega$ is a compact subdomain of $\R^{2k+1}$ with smooth boundary, then every closed Dirichlet form $\alpha \in \forms{p}$ is exact.
\end{proposition}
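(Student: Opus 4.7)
My plan is to handle the two claims of the proposition in turn. For the first (that the closed Dirichlet $p$-forms coincide with $\dharmonic{p} \oplus \dexact{p}$), I would apply the second Hodge--Morrey--Friedrichs decomposition \eqref{hmf2} to a closed Dirichlet form $\alpha$, writing
\begin{equation*}
\alpha = \omega_N + \eta_{cE} + \eta_D + \omega_D
\end{equation*}
with summands in $\ncoexact{p}$, $\coexactharmonic{p}$, $\dharmonic{p}$, and $\dexact{p}$ respectively, and then kill the first two summands by two symmetric applications of Green's formula. The last three summands are automatically closed, so $d\alpha = 0$ forces $d\omega_N = 0$; writing $\omega_N = \delta\beta$ with $\beta$ Neumann, Green's formula gives $\langle\omega_N,\omega_N\rangle = \langle d\omega_N,\beta\rangle - \int_{\bdy\Omega}\langle t\omega_N,n\beta\rangle = 0$ (since $n\beta = 0$), so $\omega_N = 0$. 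Symmetrically, both $\eta_D$ and $\omega_D$ are Dirichlet (for $\omega_D = d\phi$ with $\phi$ Dirichlet, $t(d\phi) = d(t\phi) = 0$), so the Dirichlet hypothesis on $\alpha$ forces $\eta_{cE}$ to be Dirichlet; writing $\eta_{cE} = \delta\gamma$ and using that $\eta_{cE}$ is harmonic (hence closed), Green's formula kills $\langle\eta_{cE},\eta_{cE}\rangle$, this time because $t\eta_{cE} = 0$. The reverse inclusion is immediate from the same Dirichlet-trace observation.

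For the second claim, I would reduce it to the purely cohomological statement that the forgetful map $H^p(\Omega,\bdy\Omega) \to H^p(\Omega)$ is identically zero whenever $\Omega \subset \R^{2k+1}$. Any closed Dirichlet $\alpha$ represents a relative de Rham class whose image under this forgetful map is precisely its absolute class, which vanishes iff $\alpha$ is exact. By Lefschetz duality (Theorem~\ref{lefdual}), the forgetful map is dual to $H_{q}(\Omega) \to H_{q}(\Omega,\bdy\Omega)$ with $q = 2k+1-p$, and from the long exact sequence of the pair it suffices to show that $i_*\co H_{q}(\bdy\Omega) \to H_{q}(\Omega)$ is surjective for every $1 \leq q \leq 2k$.

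This surjectivity can be read off the Mayer--Vietoris argument used in the proof of Theorem~\ref{alexander basis existence}, now applied at arbitrary degree $q$. Covering $S^{2k+1}$ (or $\R^{2k+1}$, via Lemma~\ref{whocares}) by open tubular neighborhoods of $\Omega$ and $\ocomp$ and using that $H_q(S^{2k+1}) = 0$ for $1 \leq q \leq 2k$, the Mayer--Vietoris sequence shows that $H_q(\bdy\Omega) \to H_q(\Omega) \oplus H_q(\ocomp)$ is already surjective; projecting onto the first factor recovers $i_*$, which is thus surjective as well. (For $1 \leq q < 2k$ this is in fact an isomorphism as in Theorem~\ref{alexander basis existence}, but we only need the surjection.) The main delicacies I expect are (i) in Part 1, carefully tracking the trace operators $t$ and $n$ in Green's formula, since the argument works precisely because Neumann and Dirichlet kill complementary boundary terms; and (ii) in Part 2, handling the edge degree $q = 2k$ (where the $H_{q+1}(S^{2k+1})$ term in Mayer--Vietoris may prevent injectivity), which is why it is important that only surjectivity of $i_*$, not the full direct-sum decomposition, is needed for the conclusion.
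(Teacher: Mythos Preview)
Your proposal is correct. Part~1 is essentially the paper's argument: both decompose via Hodge--Morrey--Friedrichs and eliminate the $\ncoexact{p}$ and $\coexactharmonic{p}$ pieces, though the paper argues abstractly from the orthogonality of $\ncoexact{p}$ and $\harmonic{p}$ (a closed element of $\ncoexact{p}$ would be a harmonic field, hence orthogonal to itself) and from $\coexactharmonic{p}\cap\{\text{Dirichlet}\}\subset\dharmonic{p}\perp\coexactharmonic{p}$, while you run Green's formula explicitly. These are the same idea in different clothes.

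Part~2 is a genuinely different route. The paper stays at degree $p$: it uses $\nharmonic{p}\simeq H^p(\Omega;\R)$ from Theorem~\ref{hmf} and shows the absolute class of $\alpha$ vanishes by pairing with $p$-cycles directly---Mayer--Vietoris at degree $p$ gives that every class in $H_p(\Omega)$ is represented by a cycle $Y\subset\bdy\Omega$, and then $\int_Y i^*\alpha=0$ since $\alpha$ is Dirichlet. You instead pass through Lefschetz duality to the dual degree $q=2k+1-p$, reducing to surjectivity of $i_*\co H_q(\bdy\Omega)\to H_q(\Omega)$, and then invoke Mayer--Vietoris at degree $q$. Your argument is more structural (it exhibits the vanishing of the forgetful map $H^p(\Omega,\bdy\Omega)\to H^p(\Omega)$ as a general fact about domains in $\R^{2k+1}$, independent of any particular $\alpha$), at the cost of the extra Lefschetz-duality step; the paper's argument is shorter and more hands-on. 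Either way the Mayer--Vietoris input is the same, and your care about the edge degree $q=2k$ (where only surjectivity, not the full splitting, survives) is well placed.
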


Our notation has the unfortunate consequence of making it easy to misread this proposition.  For Euclidean subdomains, the proposition does imply that every form in $\dharmonic{p}$ must be exact. It is tempting to conclude that this means that $\dharmonic{p} \subset \dexact{p}$, contradicting the orthogonality of the Hodge-Morrey-Friedrichs decomposition. This is not the case.  For a form $\alpha$ to be in $\dexact{p}$ it is necessary but \emph{not} sufficient that $\alpha$ be exact and Dirichlet: $\alpha$ must also have a Dirichlet primitive.  The exact forms that are Dirichlet but fail to have a Dirichlet primitive all lie in $\dharmonic{p}$.

\begin{proof}
We begin by showing that the subspace $\ncoexact{p}$ is the orthogonal complement of the closed forms on $\Omega$.  
Any form $\alpha \in \ncoexact{p}$ is co-exact, hence co-closed; if $\alpha$ were also closed, it would be a harmonic field lying in $\nharmonic{p}$ which has trivial intersection with $\ncoexact{p}$ by \eqref{hmf1}.  Since harmonic fields and exact forms are both closed, all of the other three summands in \eqref{hmf1} are closed, which shows that $\forms{p} = \ncoexact{p} \oplus \{\text{closed\ } p\text{-forms}\}$.

We now can apply the second decomposition \eqref{hmf2} to say that the closed Dirichlet forms must lie inside $\coexactharmonic{p} \oplus \dharmonic{p} \oplus \dexact{p}$.  No $p$-form in $\coexactharmonic{p}$ can be Dirichlet, since it is also a harmonic field and the harmonic Dirichlet fields comprise $\dharmonic{p}$, which lies orthogonal to $\coexactharmonic{p}$.  Thus we conclude that the subspace of closed Dirichlet $p$-forms is precisely $\dharmonic{p} \oplus \dexact{p}$; all of these forms are both closed and Dirichlet. 

Now we turn toward the second statement of the proposition.  Let $\Omega$ now be a compact subdomain of $\R^{2k+1}$ with smooth boundary.  Using \eqref{hmf1}, we can decompose any closed Dirichlet form $\alpha \in \forms{p}$ as
\begin{equation*}
\alpha = \alpha_{\mathcal{H}_N} + \alpha_{\mathcal{EH}} + \alpha_{\mathcal{E}_D},
\end{equation*}
since it has no component in $\ncoexact{p}$.  In other words, $\alpha$ is the sum of a form in $\nharmonic{p}$ and an exact form. Further, since $\nharmonic{p} \simeq H^{p}(\Omega;\R)$, we can determine $\alpha_{\mathcal{H}_N}$ by finding the cohomology class represented by $\alpha$ in $H^{p}(\Omega)$. 

We claim that this cohomology class is zero since $\alpha$ is Dirichlet.  To see this, recall that if $\ocomp$ is the complement of $\Omega$ in $\R^{2k+1}$, the Mayer-Vietoris sequence for $\R^{2k+1} = \Omega \cup \ocomp$ includes
\begin{equation*}
0 = H_{p-1}(\R^{2k+1}) \rightarrow H_{p}(\bdy\Omega) \xrightarrow{i_* \oplus i'_*} H_{p}(\Omega) \oplus H_{p}(\ocomp) \rightarrow H_{p}(\R^{2k+1})  = 0
\end{equation*}
where $i$ is the inclusion $\bdy\Omega \hookrightarrow \Omega$.  Thus in particular  $i_* \co H_{p}(\bdy\Omega) \rightarrow H_{p}(\Omega)$ is onto.  Now this means that for any cycle $[X] \in H_{p}(\Omega)$, we can find $Y \in \bdy\Omega$ so that $i_*([Y]) = [X]$.  Then, since a Dirichlet form vanishes on the boundary,
\begin{equation*}
\int_X \alpha = \int_Y i^*(\alpha) = \int_Y 0 = 0.
\end{equation*}
This means that $\alpha_{\mathcal{H}_N} = 0$, and so $\alpha$ is exact. 
\end{proof}

One more subspace appears in our next decomposition.  Define $\exactcoexact{p}$ to be all $p$-forms that are both exact and co-exact; all such forms are clearly harmonic $p$-fields.
\begin{corollary}[cf., {\cite[Theorem 2.1.1]{shonk}}] For $\Omega$ is a compact subdomain of $\R^{2k+1}$ with smooth boundary, the following decomposition holds
\begin{equation} \label{hmfdg}
\forms{p} = \ncoexact{p} \oplus \exactcoexact{p} \oplus \nharmonic{p} \oplus \dharmonic{p} \oplus \dexact{p}.
\end{equation}
\end{corollary}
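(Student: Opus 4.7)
My plan is to refine the first Hodge-Morrey-Friedrichs decomposition~\eqref{hmf1} by splitting its third summand $\exactharmonic{p}$ into the two pieces $\exactcoexact{p}$ and $\dharmonic{p}$. If I can establish the $L^2$-orthogonal decomposition $\exactharmonic{p} = \exactcoexact{p} \oplus \dharmonic{p}$, then substituting this into~\eqref{hmf1} and reordering the summands gives~\eqref{hmfdg} directly.

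First I would check the easy containment $\exactcoexact{p} + \dharmonic{p} \subseteq \exactharmonic{p}$: any form in $\exactcoexact{p}$ is exact, closed, and co-closed by definition, so it is an exact harmonic field, while any form in $\dharmonic{p}$ is a harmonic field that is automatically exact by Proposition~\ref{alphaisexact} (this is where the hypothesis $\Omega \subset \R^{2k+1}$ is essential). For the reverse containment, I would take $\omega \in \exactharmonic{p}$ and apply the second decomposition~\eqref{hmf2}; because $\omega$ is a harmonic field, its $\ncoexact{p}$ and $\dexact{p}$ components vanish by the $L^2$-orthogonality built into HMF, leaving $\omega = \omega_{\coexactharmonic{}} + \omega_{\dharmonic{}}$. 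Since $\omega$ is exact and $\omega_{\dharmonic{}}$ is exact (Proposition~\ref{alphaisexact} again), the remaining piece $\omega_{\coexactharmonic{}}$ is exact as well, and therefore lies in $\coexactharmonic{p} \cap \exact{p} = \exactcoexact{p}$.

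Next I would verify the orthogonality of the two new summands. Given $\omega = \delta\psi \in \exactcoexact{p}$ and $\eta \in \dharmonic{p}$, integration by parts yields
\begin{equation*}
\langle \omega, \eta \rangle = \langle \delta\psi, \eta \rangle = \langle \psi, d\eta \rangle \pm \int_{\bdy\Omega} \iota^*(\eta \wedge *\psi),
\end{equation*}
where the volume integral vanishes because $d\eta = 0$ (as $\eta$ is a harmonic field), and the boundary integral vanishes because $\iota^*\eta = 0$ by the Dirichlet condition on $\eta$. The main obstacle is really just bookkeeping — making sure Proposition~\ref{alphaisexact} is invoked correctly to pin down $\dharmonic{p} \subseteq \exactharmonic{p}$ on Euclidean subdomains, and keeping the adjoint relation between $d$ and $\delta$ consistent with orientation conventions so that the Dirichlet condition really does kill the boundary term. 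Everything else follows by rearranging the known HMF decompositions.
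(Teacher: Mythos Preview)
Your proposal is correct and follows essentially the same approach as the paper: both arguments use Proposition~\ref{alphaisexact} to place $\dharmonic{p}$ inside $\exactharmonic{p}$ and then identify its orthogonal complement there as $\exactcoexact{p}$, yielding the splitting $\exactharmonic{p} = \exactcoexact{p} \oplus \dharmonic{p}$ that refines~\eqref{hmf1} into~\eqref{hmfdg}. Your explicit integration-by-parts check of orthogonality is fine but superfluous, since $\exactcoexact{p} \subset \coexactharmonic{p}$ is already orthogonal to $\dharmonic{p}$ by~\eqref{hmf2}.
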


\begin{proof}
The proposition above shows all forms in $\dharmonic{p}$ are exact, hence $\dharmonic{p} \subset \exactharmonic{p}$, which lies orthogonal to $\nharmonic{p}$.  The orthogonal complement of $\dharmonic{p}$ within $\exactharmonic{p}$ are those harmonic fields that are both exact and co-exact.  Hence, the harmonic $p$-fields decompose as
\begin{equation*} 
\harmonic{p} = \exactcoexact{p} \oplus \nharmonic{p} \oplus \dharmonic{p},
\end{equation*}
which along with the Hodge-Morrey-Friedrichs decomposition proves this corollary.
\end{proof}

We conclude with one more decomposition, for vector fields on a domain in $\R^3$; see \cite{MR1901496} for full details.  

\begin{theorem}[Hodge Decomposition Theorem for vector fields in $\R^3$~\cite{MR1901496}] \label{thm:hodgevf}
Let $\Omega$ be a compact subdomain of $\R^3$ with smooth $\partial\Omega$. Then, the space of smooth vector fields $VF(\Omega)$ decomposes into five mutually orthogonal subspaces,
\begin{equation} \label{hodgevf}
VF(\Omega)=FK \oplus HK \oplus CG \oplus HG \oplus GG,
\end{equation}
where,
\begin{center}
\begin{displaymath}
\begin{array}{cclcl}
FK & = & \mathrm{fluxless\ knots} & = & \left\{\diver{V}=0, \; V \cdot n =0, \; \mathrm{all\ interior\ fluxes} = 0\right\}\\
HK & = & \mathrm{harmonic\ knots} & = & \{\diver{V}=0, \; V \cdot n =0, \; \curl{V}=0\} \\
CG  & = & \mathrm{curly\ gradients}  & = & \{V=\nabla\phi, \; \diver{V}=0, \; \mathrm{all\ boundary\ fluxes} = 0\} \\
HG & = & \mathrm{harmonic\ gradients} & = & \{V=\nabla\phi, \; \diver{V}=0, \; \phi \mathrm{\ locally\ const.\ on\ } \partial\Omega\} \\
GG & = & \mathrm{grounded\ gradients} & = & \{V=\nabla\phi, \; \phi|_{\partial\Omega}=0\}
\end{array}
\end{displaymath}
\end{center}
\end{theorem}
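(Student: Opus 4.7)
The plan is to derive the five-fold decomposition of $VF(\Omega)$ directly from the refined Hodge-Morrey-Friedrichs decomposition~\eqref{hmfdg} for $p=1$, by translating the 1-form statement into vector field language via the musical isomorphism induced by the Euclidean metric on $\Omega \subset \R^3$. First I would set up the dictionary: to $V \in VF(\Omega)$ associate the 1-form $\omega_V(X) = \langle V, X\rangle$. Standard identities then give $\diver V = 0 \iff \delta\omega_V = 0$, $\curl V = 0 \iff d\omega_V = 0$, $V \cdot n = 0$ on $\bdy\Omega \iff \omega_V$ satisfies Neumann boundary conditions, and $V = \nabla\phi \iff \omega_V = d\phi$ is exact. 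The musical isomorphism is an $L^2$-isometry on smooth 1-forms and vector fields, so orthogonality transfers for free once the five subspaces are matched.

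Next I would identify each summand of~\eqref{hmfdg} with one of the five vector field subspaces. The matches $HK \leftrightarrow \nharmonic{1}$, $GG \leftrightarrow \dexact{1}$, and $HG \leftrightarrow \dharmonic{1}$ are nearly definitional: closed $+$ co-closed $+$ Neumann translates exactly to curl-free $+$ div-free $+$ tangent; $GG$ matches the definition of a form $d\phi$ with Dirichlet primitive; and for $HG$ one invokes Proposition~\ref{alphaisexact} to see that every Dirichlet harmonic 1-form is $d\phi$ where, because $d\phi$ vanishes on $\bdy\Omega$, the potential $\phi$ is locally constant on each boundary component. The remaining two identifications, $FK \leftrightarrow \ncoexact{1}$ and $CG \leftrightarrow \exactcoexact{1}$, require showing that the flux linear functionals are exactly the linear conditions cutting out the claimed subspaces inside the appropriate larger spaces.

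The hard part will be these two flux identifications. Specifically, one must show that the interior fluxes $\int_\Sigma V \cdot n\,dA$, as $\Sigma$ ranges over generators of $H_2(\Omega,\bdy\Omega)$, are precisely the linear functionals that detect the $\nharmonic{1}$ component of a div-free, tangent field (so vanishing of all interior fluxes places $\omega_V$ in $\ncoexact{1}$); and that the boundary fluxes $\int_{S_i} V \cdot n\,dA$ over boundary components are the linear functionals that detect the $\dharmonic{1}$ component of an exact co-closed field (so vanishing of all boundary fluxes places $\omega_V$ in $\exactcoexact{1}$). Both are consequences of Stokes' theorem together with the Poincar\'e-Lefschetz duality (Theorem~\ref{lefdual}) identifying $\nharmonic{1} \simeq H^1(\Omega) \simeq H_2(\Omega,\bdy\Omega)$ and $\dharmonic{1} \simeq H^1(\Omega,\bdy\Omega) \simeq H_2(\Omega)$. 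Combining Hodge-star duality with these pairings shows the flux conditions cut out exactly the orthogonal complements claimed; the paper~\cite{MR1901496} carries out this bookkeeping, and I would follow that route. Once the five matches are in place, the orthogonality and direct-sum structure follow from the corresponding statement in~\eqref{hmfdg}, completing the proof.
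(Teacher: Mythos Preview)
Your approach is correct. The paper does not prove this theorem directly (it is cited from~\cite{MR1901496}); what the paper does prove is the subsequent Corollary, which establishes the same correspondence but via the $2$-form duality $V \leftrightarrow \iota_V\dVol$ and the decomposition~\eqref{hmfdg} for $p=2$, rather than the musical isomorphism $V \leftrightarrow \omega_V$ and $p=1$ that you use. The two routes are Hodge-star dual to one another: your identifications $\ncoexact{1}\cong FK$, $\exactcoexact{1}\cong CG$, $\nharmonic{1}\cong HK$, $\dharmonic{1}\cong HG$, $\dexact{1}\cong GG$ become, after applying $\star$, exactly the paper's identifications $\dexact{2}\cong FK$, $\exactcoexact{2}\cong CG$, $\dharmonic{2}\cong HK$, $\nharmonic{2}\cong HG$, $\ncoexact{2}\cong GG$. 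Your $p=1$ choice makes the gradient pieces ($GG$, $HG$, $CG$) almost definitional and pushes the work into the flux conditions for $FK$; the paper's $p=2$ choice makes the divergence-free tangent pieces ($FK$, $HK$) nearly definitional and pushes the work into identifying the gradient pieces. Either bookkeeping is fine, and your outline of the flux argument via Stokes and Lefschetz duality is the right one.
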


\vspace{2ex}
\begin{corollary}
The Hodge Decomposition Theorem for vector fields precisely corresponds to the five-term decomposition \eqref{hmfdg} for 2-forms on $\Omega$.  The five summands in each decomposition are isomorphic as vector spaces, and pair as follows:
\begin{equation} \label{five-terms}
c\mathcal{E}_N \cong GG, \quad \mathcal{E}c\mathcal{E} \cong CG, \quad \mathcal{H}_N \cong HG, \quad \mathcal{H}_D  \cong HK, \quad \mathcal{E}_D \cong FK . 
\end{equation}
\end{corollary}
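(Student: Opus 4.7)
The plan is to construct an explicit linear isomorphism $\Psi \co VF(\Omega) \to \Lambda^2(\Omega)$ and then check that it carries each of the five summands on the vector-field side to the corresponding summand on the form side. I would set $\Psi(V) := \alpha_V = \iota_V \dVol$; since $\alpha_V = * \omega_V$, where $\omega_V$ is the metric dual $1$-form to $V$, a direct calculation gives $d\alpha_V = (\diver{V})\,\dVol$ and $\delta\alpha_V = \omega_{\curl{V}}$, so $\alpha_V$ is closed iff $V$ is divergence-free and co-closed iff $V$ is curl-free. On the boundary, $\alpha_V$ is Dirichlet iff $V$ is tangent to $\bdy\Omega$ (since $(\iota_V\dVol)(u_1,u_2) = \dVol(V,u_1,u_2)$ vanishes for all tangential pairs iff $V$ is itself tangential), and Neumann iff $V$ is purely normal (as the Hodge star interchanges the two boundary conditions and $*\alpha_V = \omega_V$). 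Finally, $V = \nabla\phi$ iff $\alpha_V = \delta(\phi\,\dVol)$ is co-exact, and $V = \curl{W}$ iff $\alpha_V = d\omega_W$ is exact. Note also that $\Psi$ is an $L^2$-isometry, since $\langle \alpha_V,\alpha_W\rangle = \langle *\omega_V,*\omega_W\rangle = V\cdot W$.

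With these translations, three of the five pairings are immediate. The match $\mathcal{H}_D \cong HK$ is just closed-and-co-closed plus tangency, which is divergence-free, curl-free, and tangent to $\bdy\Omega$. For $\mathcal{H}_N \cong HG$, closed-and-co-closed gives $V = \nabla\phi$ with $\Delta\phi = 0$, and the Neumann condition forces $V$ to be normal, which is equivalent to $d\phi|_{T\bdy\Omega} = 0$, i.e., $\phi$ locally constant on each boundary component. For $c\mathcal{E}_N \cong GG$, writing the co-exact primitive as $\phi\,\dVol$ and unwinding the Neumann condition $*(\phi\,\dVol) = \phi$ Dirichlet gives $\phi|_{\bdy\Omega}=0$, exactly the defining property of a grounded gradient.

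The two remaining pairings $\mathcal{E}_D \cong FK$ and $\mathcal{E}c\mathcal{E} \cong CG$ are the subtle ones, since they require matching topological flux conditions with the existence of primitives having the required boundary behavior. One direction comes quickly from Stokes' theorem: if $\alpha_V = d\beta$ for a Dirichlet $1$-form $\beta = \omega_W$, then for every relative $2$-cycle $\Sigma \subset \Omega$ with $\bdy\Sigma \subset \bdy\Omega$, $\int_\Sigma V\cdot n\,dA = \int_\Sigma d\beta = \int_{\bdy\Sigma}\beta = 0$, yielding the interior-flux condition; exactness of $\alpha_V$ alone similarly forces the boundary fluxes $\int_{\Sigma_i} V\cdot n\,dA$ to vanish on each closed component $\Sigma_i$ of $\bdy\Omega$, as $\bdy\Sigma_i = \emptyset$. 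The converse direction---producing a primitive with the correct Dirichlet boundary behavior from a vector field satisfying only the flux condition---is the main obstacle. I would finesse this by exploiting the fact that both decompositions are $L^2$-orthogonal direct sums and that $\Psi$ is an $L^2$-isometry: once the three easy pairings and the inclusions $\Psi(FK) \subset \mathcal{E}_D$ and $\Psi(CG) \subset \mathcal{E}c\mathcal{E}$ are established, the remaining summands on each side must match as orthogonal complements. The required dimension count---that the number of independent interior (resp.\ boundary) flux conditions equals $\dim H_2(\Omega,\bdy\Omega) \cong \dim H^1(\Omega)$---is supplied by the homological framework developed in Appendix~\ref{hom}.
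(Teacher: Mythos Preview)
Your approach is essentially the same as the paper's: set up the dictionary $V \leftrightarrow \iota_V\dVol$ and translate each defining condition (closed $\leftrightarrow$ divergence-free, co-exact $\leftrightarrow$ gradient, Dirichlet $\leftrightarrow$ tangent, etc.), then read off the five pairings. If anything, you are more careful than the paper on the one genuinely nontrivial point, the match $\mathcal{E}_D \cong FK$: the paper's proof only argues the direction ``Dirichlet primitive $\Rightarrow$ interior fluxes vanish'' and then asserts that this ``precisely characterizes $FK$'', whereas you recognize the converse as the hard step and supply an orthogonal-complement argument using that $\Psi$ is an $L^2$-isometry.

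One small slip to flag: the inclusions you write at the end, $\Psi(FK) \subset \mathcal{E}_D$ and $\Psi(CG) \subset \mathcal{E}c\mathcal{E}$, are the \emph{hard} directions, not the ones your Stokes argument actually produced. From Stokes you obtained $\Psi^{-1}(\mathcal{E}_D) \subset FK$ and $\Psi^{-1}(\mathcal{E}c\mathcal{E}) \subset CG$, i.e., $\mathcal{E}_D \subset \Psi(FK)$ and $\mathcal{E}c\mathcal{E} \subset \Psi(CG)$. Fortunately the complement argument works with either pair of same-direction inclusions: once the three easy summands are matched, $\Psi(FK)\oplus\Psi(CG) = \mathcal{E}_D \oplus \mathcal{E}c\mathcal{E}$ as orthogonal complements, and two inclusions going the same way then force both to be equalities. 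So the logic is fine once the inclusion directions are corrected.
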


\begin{proof}
To prove this corollary, we consider a 2-form $\alpha$ on a subdomain $\Omega \subset \R^3$ and its dual vector field $V$.  We translate our definitions regarding forms into the statements about vector fields.

\begin{itemize}
\item $\alpha$ is {\it closed}, i.e., $d\alpha = 0$ $\Longleftrightarrow$ $V$ is divergence-free.
\item $\alpha$ is {\it exact}, i.e., $\alpha = d\beta$ for a 1-form $\beta$  $\Longleftrightarrow$ $V$ lies in the image of curl.
\item $\alpha$ is {\it co-closed}, i.e., $\delta\alpha = \star d \star \alpha = 0$.  Here $d \star \alpha$ amounts to taking the curl of $V$, so co-closed corresponds to $V$ lying in the kernel of curl.
\item $\alpha$ is {\it co-exact}, i.e., $\alpha = \delta\gamma = \star d \star \gamma $, for some 3-form $\gamma$.  Any 3-form can be written as $\gamma = f \dVol$, so $\star \gamma = f$.  Thus, co-exact corresponds to $V$ equaling a gradient $\nabla f$.
\item $\alpha$ is {\it Dirichlet} $\Longleftrightarrow$ $V$ is tangent to the boundary.
\item $\alpha$ is {\it Neumann}, meaning $\star \alpha$ is Dirichlet $\Longleftrightarrow$ $V$ is normal to the boundary.
\item $\alpha \in \ncoexact{2} \Longleftrightarrow V = \nabla f$ and $f=0$ on the boundary
\end{itemize}

Now, let us examine each piece of \eqref{five-terms}.  The first piece, $\ncoexact{2}$, corresponds to gradients that vanish on $\bdy\Omega$; this defines the subspace $GG$.  The exact and co-exact forms correspond to gradients that lie in the image of curl, namely $CG$.  

The third piece, $\nharmonic{2}$ are closed and co-closed Neumann forms.  (Combining \eqref{hmf2} and \eqref{hmfdg}, we see that forms in $\nharmonic{p}$ actually co-exact.)  The forms in $\nharmonic{2}$ are the only closed forms that are not exact.  These correspond to vector fields that are divergence-free but not in the image of curl comprise $HG$.  

The fourth piece, $\dharmonic{2}$ are exact, co-closed Dirichlet forms.  These correspond to divergence-free fields which are tangent to the boundary and lie in the kernel of curl, which characterizes $HK$.  As an additional check, $\nharmonic{2} \simeq H^2(\Omega; \R) \simeq HG$ and $\dharmonic{2} \simeq H^1(\Omega; \R) \simeq HK$.

The final piece $\dexact{2}$ consists of exact forms whose primitives obey a Dirichlet condition.  The corresponding vector field then must be divergence-free and tangent to the boundary; furthermore, the Dirichlet condition on the primitive implies that all fluxes of the vector field over surfaces $\Sigma \subset \Omega$ with $\bdy\Sigma \subset \bdy\Omega$ must vanish.  This precisely characterizes $FK$.
\end{proof}

\end{document}